\def\R{\mathbb{R}}
\def\imi{\textbf{\hskip1pt i\hskip1pt}}
\newtheorem{theorem}{Theorem}
\newtheorem{proposition}[theorem]{Proposition}
\newenvironment{proof}{\noindent {\it Proof}~}{}
\title{Convergence analysis of the Modified Craig--Sneyd scheme for two-dimensional 
convection-diffusion equations with nonsmooth initial data}
\author{Maarten Wyns\footnote{Department of Mathematics and Computer Science,
University of Antwerp, Middelheimlaan 1, B-2020 Antwerp, Belgium.
\mbox{Email}: \texttt{maarten.wyns@uantwerpen.be}.}}
\date{\today}
\begin{document}

\maketitle

\begin{abstract}
\noindent
In this paper we consider the Modified Craig--Sneyd (MCS) scheme which forms a prominent time stepping method of the Alternating Direction Implicit type for multidimensional time-dependent convection-diffusion equations with mixed spatial derivative terms. 
When the initial function is nonsmooth, which is often the case for example in financial mathematics, application of the MCS scheme can lead to spurious erratic behaviour of the numerical approximations.
We prove that this undesirable feature can be resolved by replacing the very first MCS timesteps by several (sub)steps of the implicit Euler scheme.
This technique is often called Rannacher time stepping. We derive a useful convergence bound for the MCS scheme combined with Rannacher time stepping when it is applied to a model two-dimensional convection-diffusion equation with mixed-derivative term and with Dirac-delta initial data. Ample numerical experiments are provided that show the sharpness of our obtained error bound.
\end{abstract}
\vspace{0.2cm}\noindent
{\small\textbf{Key words:} Convection-diffusion equations, ADI splitting schemes, convergence analysis, Rannacher time stepping.}
\vspace{3mm}
\normalsize

\setcounter{equation}{0}
\section{Introduction}\label{intro}

In financial mathematics, the fair value $u(s_{1},s_{2},t)$ of a European style option on two underlying assets is modelled by the two-dimensional \textit{Black--Scholes partial differential equation} (PDE), see e.g.\ \cite{B98}, 
\begin{equation}
u_{t} = \tfrac{1}{2} \sigma_{1}^{2} s_{1}^{2} u_{s_{1}s_{1}} + \rho\sigma_{1}\sigma_{2} s_{1}s_{2} u_{s_{1}s_{2}} + \tfrac{1}{2} \sigma_{2}^{2} s_{2}^{2} u_{s_{2}s_{2}} +  r s_{1}u_{s_{1}} + r s_{2} u_{s_{2}} - r u,
\label{eq:2DBlackScholes}
\end{equation}
for $s_{1},s_{2} > 0, \ 0 < t \leq T$. Here, $t$ denotes the time to maturity $T$ and we assume real parameters $r, \sigma_{1} > 0, \sigma_{2}>0, \vert \rho \vert < 1$. 
The PDE \eqref{eq:2DBlackScholes} is provided with an initial condition that is defined through the payoff of the option.

The mixed spatial derivative term in \eqref{eq:2DBlackScholes} represents the correlation between both asset prices in the two-dimensional Black--Scholes model. Mixed spatial derivative terms are very important, notably, in the field of financial option valuation theory. 
Here they arise due to the correlation between underlying stochastic processes.

A well-known approach for determining the fair values $u(s_{1},s_{2},T)$ consists of numerically solving PDE (\ref{eq:2DBlackScholes}) by the \textit{method-of-lines}, whereby one first discretizes in space and subsequently in time. 
In this paper we consider a uniform Cartesian grid and second-order central finite difference schemes in space.
This semidiscretization is second-order convergent with respect to the spatial mesh width if the initial and boundary data is smooth, see e.g.\ \cite{HV03}.
When the PDE is multidimensional, then the application of classical implicit time discretization methods to the obtained semidiscrete systems can be computationally very intensive.
In view of this, for the effective time discretization, operator splitting schemes of the \textit{Alternating Direction Implicit} (ADI) type are widely considered. 
In this paper we consider the \textit{Modified Craig--Sneyd (MCS) scheme} \cite{IHW09}, which is a prominent scheme of the ADI type.
In the past years various positive stability results for the MCS scheme have been derived relevant to multidimensional convection-diffusion equations with mixed derivative terms, see e.g.\ \cite{IHM11,IHM13,IHW09,M14}. Recently, in 't Hout and Wyns \cite{IHW15} proved that, under some natural stability and smoothness assumptions, the MCS scheme is second-order convergent with respect to the time step whenever it is applied to semidiscrete two-dimensional convection-diffusion equations with mixed derivative term. The temporal convergence result from \cite{IHW15} has the crucial property that it holds uniformly in the spatial mesh width. Hence, the fully discrete numerical solution is second-order convergent in space and time for smooth initial and boundary data.

A relevant convergence analysis for the MCS scheme and nonsmooth data is still open in the literature.
In financial applications, however, the initial function is in general nonsmooth.
It is well-known that convergence can then be seriously impaired.
As an illustration, consider a two-asset cash-or-nothing option with strikes $K_{1}>0$ and $K_{2}>0$, so that
$$ u(s_{1},s_{2},0) = \mathbbm{1}_{\{s_{1} \geq K_{1}\}} \mathbbm{1}_{\{s_{2} \geq K_{2}\}}, $$
where $\mathbbm{1}$ denotes the indicator function.
In the upper left plot in Figure \ref{fig:DigitalCall2}, the numerical solution for $u(s_{1},s_{2},T)$ is shown for (natural) financial parameter values $r=0.05,$ $\sigma_{1} = 0.2,$ $\sigma_{2} = 0.25,$ $\rho = -0.7,$ $K_{1} = 1,$ $K_{2} = 1,$ $T = 2$.  
Irregularities can be observed around the strikes, leading to a loss of accuracy in the maximum norm. 
For hedging purposes it is important to consider also the Greeks, for example the cross gamma $\Gamma = u_{s_{1}s_{2}}$. 
The corresponding PDE is given by
\begin{eqnarray}
\Gamma_{t} &=& \tfrac{1}{2} \sigma_{1}^{2} s_{1}^{2} \Gamma_{s_{1}s_{1}} + \rho\sigma_{1}\sigma_{2} s_{1}s_{2} \Gamma_{s_{1}s_{2}} + \tfrac{1}{2} \sigma_{2}^{2} s_{2}^{2} \Gamma_{s_{2}s_{2}} \nonumber \\
&& + \ (r+\sigma_{1}^{2} + \rho \sigma_{1} \sigma_{2}) s_{1}\Gamma_{s_{1}} + (r+\sigma_{2}^{2} + \rho \sigma_{1} \sigma_{2}) s_{2} \Gamma_{s_{2}} + (r + \rho\sigma_{1}\sigma_{2}) \Gamma,
\label{eq:PDECrossGamma}
\end{eqnarray}
for $s_{1},s_{2} > 0, \ 0 < t \leq T$. This is supplemented with initial function
\begin{equation*}
\Gamma(s_{1},s_{2},0) = u_{s_{1}s_{2}}(s_{1},s_{2},0) = \delta(s_{1}-K_{1}) \delta(s_{2} - K_{2}),
\end{equation*} 
where $\delta$ is the \textit{Dirac delta function}. 
The lower left plot in  Figure \ref{fig:DigitalCall2} shows the numerical solution for the cross gamma at maturity $T$ for the same financial parameter values as above.
Around the point $(s_{1},s_{2}) = (K_{1},K_{2})$ strong, spurious erratic behaviour shows up and, hence, this approximation is useless in practice. 
If the cross gamma is approximated by applying finite difference schemes directly to the numerical solution for the option value, which is a common alternative technique in practice, the same observations are found.

For one-dimensional applications in finance, the impact of nonsmooth initial data on convergence has already been studied extensively and various techniques have been proposed in order to recover standard convergence results, see e.g.\ \cite{GC06,PVF03}. 
A common technique consists of first applying several implicit Euler (sub)steps and then continue with the time stepping scheme under consideration, \cite{R84}. This is called \textit{Rannacher time stepping} or \textit{implicit Euler damping}. 

Consider again PDEs (\ref{eq:2DBlackScholes}) and \eqref{eq:PDECrossGamma} for the two-asset cash-or-nothing option. 
Replacing the MCS scheme in the first two timesteps by four half-timesteps of the implicit Euler scheme, the two right plots in Figure \ref{fig:DigitalCall2} are obtained. 
Clearly, there are no longer irregularities or oscillations present. 
In many other multidimensional applications, see e.g.\ \cite{IHF10}, the same observations were made. To the best of our knowledge, however, there are no theoretical results available in the literature concerning the favourable effect of Rannacher time stepping on the convergence of the MCS scheme if the initial data is nonsmooth.

In the present paper we will prove a useful convergence bound for the MCS scheme when it is applied to a model two-dimensional convection-diffusion equation with mixed derivative term, provided with Dirac delta initial data. Here, semidiscretization is performed with second-order central finite difference schemes. 
The precise influence of Rannacher time stepping on the order of convergence will be investigated.
Our analysis in this paper is inspired by that of Giles and Carter \cite{GC06}, who deal with the Crank-Nicolson scheme applied to a model one-dimensional convection-diffusion equation.
We make use of a two-dimensional mixed discrete/continuous Fourier transformation and analyse the asymptotic behaviour of the Fourier transform. 
By applying then the inverse transformation we arrive at an error bound for the total error.
The sharpness of the error bound is confirmed by ample numerical experiments.

\begin{figure}
\begin{center}
\includegraphics[scale=0.5]{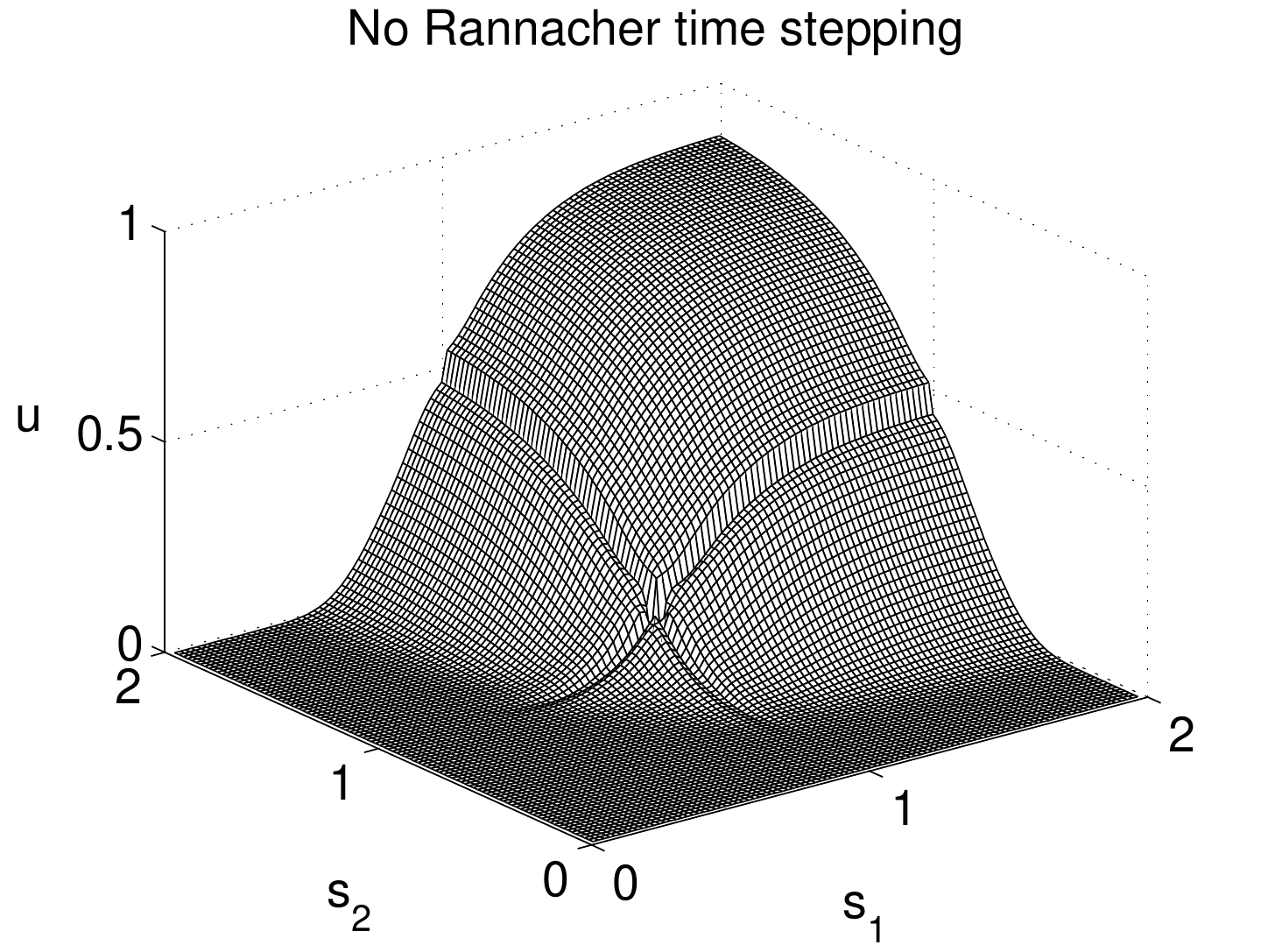}
\includegraphics[scale=0.5]{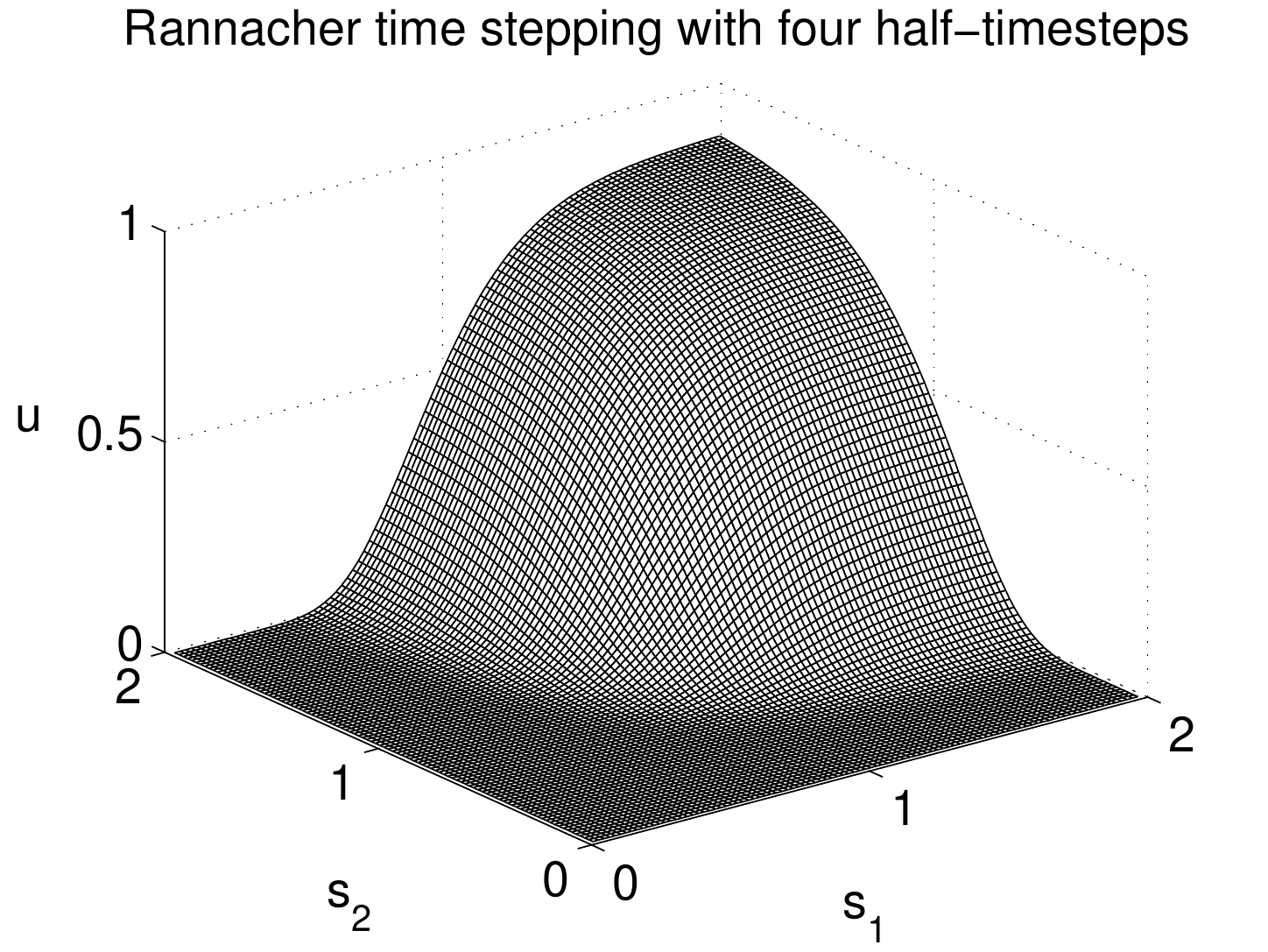} \newline \newline \newline
\includegraphics[scale=0.5]{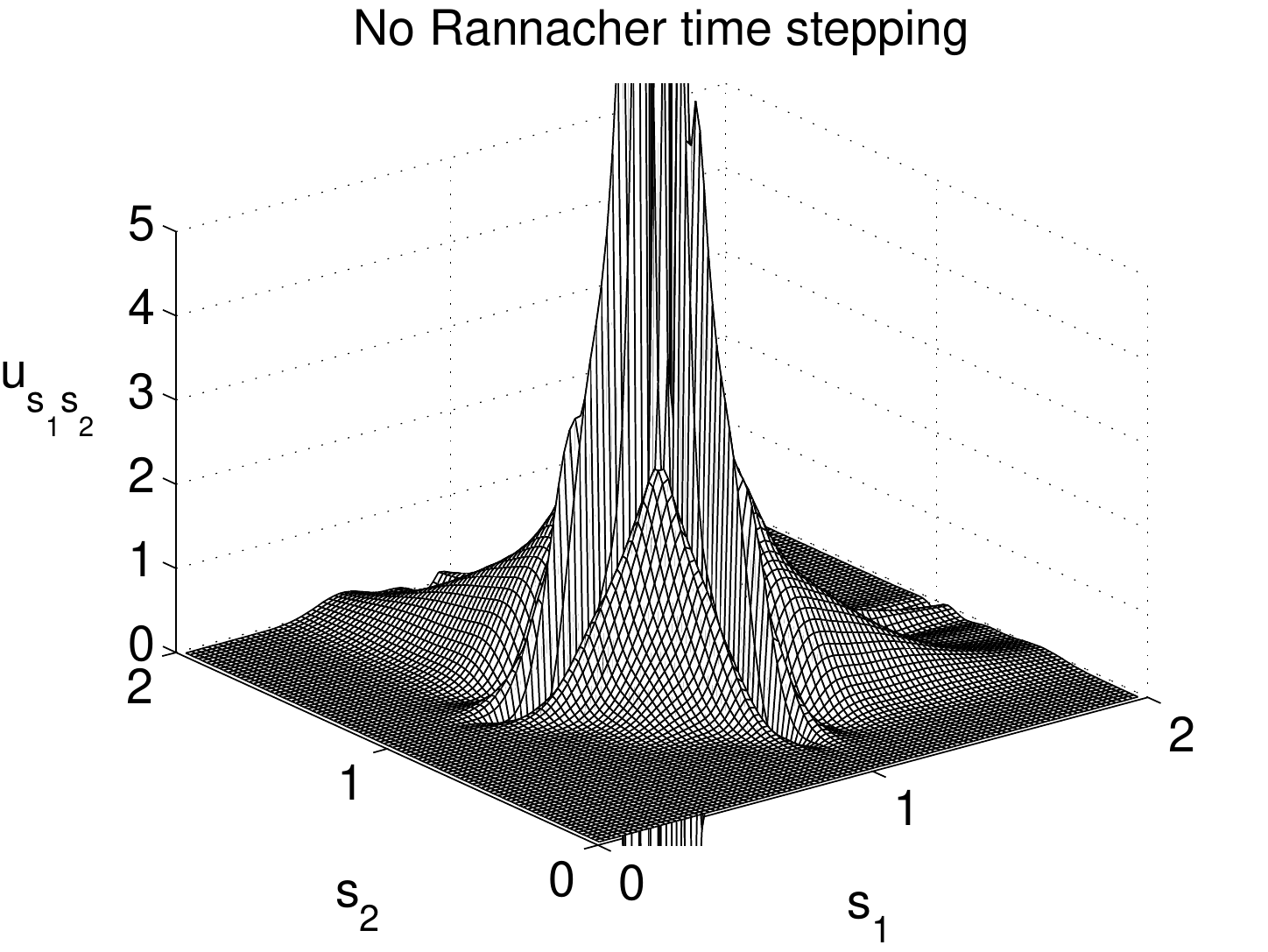}
\includegraphics[scale=0.5]{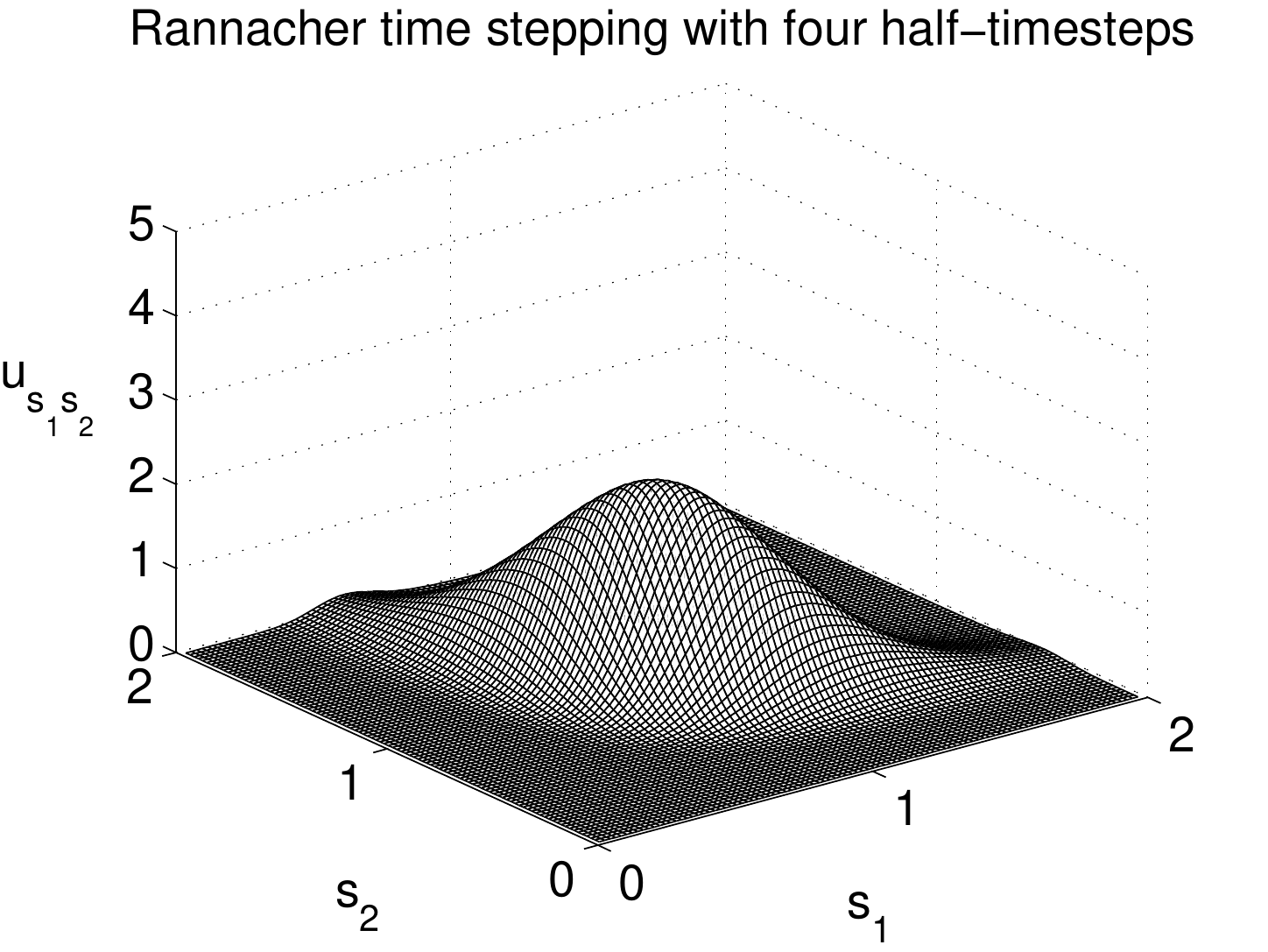}
\caption{Numerical approximations of the cash-or-nothing option value (top) and of its cross gamma (bottom) without (left) and with (right) Rannacher time stepping with four half-timesteps. The financial parameter values are $r=0.05,\ \sigma_{1} = 0.2,\ \sigma_{2} = 0.25,\ \rho = -0.7,\ K_{1} = 1,\ K_{2} = 1,\ T = 2$.}
\label{fig:DigitalCall2}
\end{center}
\end{figure}

\newpage
\setcounter{equation}{0}
\section{The Modified Craig--Sneyd scheme}\label{MCSScheme}

Semidiscretization by finite difference methods of initial-boundary value problems for time-dependent convection-diffusion equations leads to large systems of stiff ordinary differential equations (ODEs),
\begin{equation*}
U'(t)=F(t,U(t)) \quad (0 \leq t \leq T), \quad U(0)=U_{0},
\end{equation*}
with given operator $F$ and given initial value $U_{0}$. 
Assume the PDE is two-dimensional and the semidiscrete operator $F$ is decomposed into a sum
\begin{equation*}
F(t,v) = F_{0}(t,v) + F_{1}(t,v) + F_{2}(t,v) \quad (0 \leq t \leq T),
\end{equation*}
where $F_{0}$ represents the mixed spatial derivative term and $F_{1}, F_{2}$, represent all spatial derivative terms in the first, respectively, the second spatial direction.
Let $\theta > 0$ be a given parameter, $N \geq 1$ the number of timesteps and set $t_{n} = n\Delta t$ with $\Delta t = T/N$.
Then the \textit{Modified Craig--Sneyd (MCS) scheme} generates, in a one-step fashion, approximations $U_{n}$ to $U(t_{n})$ successively for $n= 1,2,\ldots,N$ through
\begin{equation}
\label{eq:MCS}
\left\{\begin{array}{l}
Y_0 = U_{n-1}+\Delta t\, F(t_{n-1},U_{n-1}), \\\\
Y_i = Y_{i-1}+\theta\Delta t \left(F_i(t_n,Y_i)-F_i(t_{n-1},U_{n-1})\right),
\quad i=1,2, \\\\
\widehat{Y}_0 = Y_0+ \theta \Delta t \left(F_0(t_n,Y_2)-F_0(t_{n-1},U_{n-1})\right),\\\\
\widetilde{Y}_0 = \widehat{Y}_0+ (\tfrac{1}{2}-\theta )\Delta t \left(F(t_n,Y_2)-F(t_{n-1},U_{n-1})\right),\\\\
\widetilde{Y}_i = \widetilde{Y}_{i-1}+\theta\Delta t \,(F_i(t_n,\widetilde{Y}_i)-F_i(t_{n-1},U_{n-1})),
\quad i=1,2, \\\\
U_n = \widetilde{Y}_2.
\end{array}\right.
\end{equation}
The MCS scheme \eqref{eq:MCS} was introduced by in 't Hout \& Welfert \cite{IHW09} for general multidimensional convection-diffusion problems with mixed derivative terms.
It can be viewed as an extension of the Craig--Sneyd (CS) scheme, proposed in \cite{CS88}. For $\theta = 1/2$, the MCS scheme reduces to the CS scheme.
Besides $\theta = 1/2$, common choices for $\theta$ in the literature are $\theta = 1/3$ and $\theta=1$.
Scheme \eqref{eq:MCS} starts with an explicit Euler stage applied to the full system, which is followed by two implicit corrections corresponding to each of the two spatial directions. Subsequently an explicit update is performed, followed again by two implicit unidirectional corrector stages.
Note that both $F$ and $F_{0}$, which contain the mixed derivative term, are always treated in an explicit manner. Each implicit stage handles spatial derivatives in only one spatial direction.
This can lead to a major computational advantage in comparison to classical non-splitted implicit time stepping methods.

\setcounter{equation}{0}
\section{Model problem}\label{ModelProblem}

Consider the coordinate transformation $x = \sqrt{2} \log(s_{1})/\sigma_{1}$ and $y = \sqrt{2} \log(s_{2})/\sigma_{2}$.
The PDE (\ref{eq:2DBlackScholes}) is then transformed into
\begin{equation*}
u_{t} = u_{xx} + 2\rho u_{xy} + u_{yy} + (\tfrac{\sqrt{2}r}{\sigma_{1}} - \tfrac{\sigma_{1}}{\sqrt{2}})u_{x} + (\tfrac{\sqrt{2}r}{\sigma_{2}} - \tfrac{\sigma_{2}}{\sqrt{2}})u_{y} - ru, 
\end{equation*}
for $-\infty < x_{1},x_{2} < \infty, \ 0 < t \leq T$.
This provides a motivation for considering a constant coefficient model convection-diffusion equation with mixed derivative term
\begin{equation}
u_{t} = u_{xx} + 2\rho u_{xy} + u_{yy} + a_{1}u_{x} + a_{2}u_{y}, 
\label{eq:ModelPDE}
\end{equation}
for $-\infty < x,y < \infty, \ 0 < t \leq T=1$ and with $\vert \rho \vert < 1$. 
We supplement equation (\ref{eq:ModelPDE}) with the initial condition
\begin{equation*}
u(x,y,0) = \delta(x)\delta(y),
\end{equation*}
which arises for example in the case of the cross gamma of a two-asset cash-or-nothing option. 
The Dirac delta initial function, however, has other important applications as well. For instance, it arises naturally in the adjoint equation for the joint density.
By using the \textit{Fourier transform} pair
\begin{eqnarray*}
\hat{u}(\kappa,\eta,t) &=& \int_{-\infty}^{\infty}\int_{-\infty}^{\infty} u(x,y,t) \exp(-\imi\kappa x)\exp(-\imi\eta y) dx dy, \\
u(x,y,t) &=& \frac{1}{4\pi^{2}} \int_{-\infty}^{\infty}\int_{-\infty}^{\infty} \hat{u}(\kappa,\eta,t) \exp(\imi\kappa x)\exp(\imi\eta y) d\kappa d\eta, 
\end{eqnarray*}
an exact closed-form analytical solution will be derived. Here $\imi$ denotes the imaginary unit. Taking the Fourier transformation of equation (\ref{eq:ModelPDE}) yields the ODE
$$ \hat{u}_{t} = - \kappa^{2} \hat{u} - 2\rho\kappa\eta \hat{u} - \eta^{2}\hat{u} + \imi a_{1}\kappa\hat{u} + \imi a_{2}\eta\hat{u}, $$
subject to initial condition $\hat{u}(\kappa,\eta,0)=1$. The solution of this transformed equation is given by
\begin{equation}
\hat{u}(\kappa,\eta,t) = \exp( -(\kappa^{2} + 2\rho\kappa\eta + \eta^{2} - \imi a_{1} \kappa - \imi a_{2} \eta) t). 
\label{eq:FourierTransformExact}
\end{equation}
Next, if $(X_{1},X_{2})$ is a multivariate normal distributed random variable with mean $(\mu_{1},\mu_{2})$ and covariance matrix $\Sigma$, its characteristic function is defined by
$$ \mathbb{E}[\exp(\imi\kappa X_{1})\exp(\imi\eta X_{2})] = \exp(\imi\kappa \mu_{1} + \imi \eta \mu_{2} - \tfrac{1}{2} (\kappa \ \eta) \Sigma (\kappa \ \eta)^{\top}). $$
By exploring the connection between the characteristic function of a random variable and the Fourier transform of its density function, it follows that $u(x,y,t)$ can be seen as the density function of a two-dimensional normal distributed random variable with mean $(\mu_{1}, \mu_{2})$ and covariance matrix $\Sigma$ given by
$$ (\mu_{1}, \mu_{2}) = (-a_{1}t, -a_{2}t) \quad \mbox{and} \quad \Sigma = \left( \begin{array}{cc}
2t & 2\rho t \\
2\rho t & 2t
\end{array} \right). $$
Since $\vert \rho \vert < 1$, this yields the closed-form analytical solution
\begin{equation*}
u(x,y,t) = \tfrac{1}{4\pi t \sqrt{1-\rho^{2}}} \exp\left( -\tfrac{1}{4t}\tfrac{1}{1-\rho^{2}}[ (x+a_{1}t)^{2} + (y+a_{2}t)^{2} - 2\rho(x+a_{1}t)(y+a_{2}t)] \right).
\end{equation*}

\setcounter{equation}{0}
\section{Discretization}\label{discretization}

As mentioned in Section \ref{intro}, spatial discretisation of (\ref{eq:ModelPDE}) will be performed on a uniform Cartesian grid with second-order central finite difference schemes. For the time integration the MCS scheme will be considered.
Let $h_{1}$ denote the spatial mesh width in the $x$-direction, $h_{2}$ the spatial mesh width in the $y$-direction
and define spatial gridpoints  $(x_{j}, y_{k}) = (jh_{1}, kh_{2})$ for all $j,k \in \mathbb{Z}$.
Semidiscretization of (\ref{eq:ModelPDE}) with second-order central finite difference schemes then gives rise to approximations $U_{j,k}(t)$ of the exact solution value $u(x_{j},y_{k},t)$ which are defined by the system
\begin{equation} 
U'_{j,k}(t) = AU_{j,k}(t),
\label{eq:SemidiscreteSystem}
\end{equation}
where $A = A_{0} + A_{1} + A_{2}$ and
\begin{eqnarray*}
A_{0} &=& \frac{\rho}{2h_{1}h_{2}}\delta_{2x}\delta_{2y}, \\\\
A_{1} &=& \frac{1}{h_{1}^{2}}\delta^{2}_{x} + \frac{a_{1}}{2h_{1}}\delta_{2x}, \\\\
A_{2} &=& \frac{1}{h_{2}^{2}}\delta^{2}_{y} + \frac{a_{2}}{2h_{2}}\delta_{2y},
\end{eqnarray*}
with $\delta_{2x},\ \delta^{2}_{x},\ \delta_{2y},\ \delta^{2}_{y}$ the usual second-order central finite difference operators.  For example,
\begin{eqnarray*} 
\delta_{2x} U_{j,k}(t) &=& U_{j+1,k}(t) - U_{j-1,k}(t), \\
\delta^{2}_{x} U_{j,k}(t) &=& U_{j-1,k}(t) - 2U_{j,k}(t) + U_{j+1,k}(t), \\
\delta_{2x}\delta_{2y}U_{j,k}(t) &=& U_{j+1,k+1}(t) + U_{j-1,k-1}(t) - U_{j+1,k-1} - U_{j-1,k+1}(t). 
\end{eqnarray*}
Semidiscrete system (\ref{eq:SemidiscreteSystem}) is provided with initial data
\[ U_{j,k}(0) = 
\begin{cases}
\frac{1}{h_{1}h_{2}} \qquad & \mbox{if } \ j = k =0, \\
0 & \mbox{else},
\end{cases} \]
in order to approximate the Dirac delta initial function.
For convenience we define
$$ Z = \Delta t A, \quad Z_{i} = \Delta t A_{i} \quad \mbox{for} \ i = 0,1,2,$$
and we denote by $I$ the identity operator.
Then, starting from $U_{0,j,k}=U_{j,k}(0)$, application of the MCS scheme to semidiscrete system (\ref{eq:SemidiscreteSystem}) yields approximations $U_{n,j,k}$ of $U_{j,k}(t_{n})$ successively for $n = 1,2,\ldots,N$ through
\begin{equation}
\label{eq:MCSOnModel}
\left\{\begin{array}{rcll}
Y_{0,j,k} &=& (I + Z) U_{n-1,j,k},& \\\\
(I - \theta Z_{i} ) Y_{i,j,k} &=& Y_{i-1,j,k} - \theta Z_{i} U_{n-1,j,k} &\quad i=1,2, \\\\
\widehat{Y}_{0,j,k} &=& Y_{0,j,k} + \theta Z_{0} Y_{2,j,k} - \theta Z_{0} U_{n-1,j,k},& \\\\
\widetilde{Y}_{0,j,k} &=& \widehat{Y}_{0,j,k} + (\tfrac{1}{2}-\theta ) Z Y_{2,j,k} - (\tfrac{1}{2}-\theta ) Z U_{n-1,j,k},& \\\\
(I - \theta Z_{i} ) \widetilde{Y}_{i,j,k} &=& \widetilde{Y}_{i-1,j,k} - \theta Z_{i} U_{n-1,j,k} &\quad i=1,2, \\\\
U_{n,j,k} &=& \widetilde{Y}_{2,j,k}.&
\end{array}\right.
\end{equation}
Concerning the Rannacher time stepping, let $N_{0}$ denote the number of initial MCS time steps replaced by $2N_{0}$ half-time steps of implicit Euler integration. Whenever $N_{0}>0$ scheme (\ref{eq:MCSOnModel}) is replaced by
\begin{equation}
\label{eq:RannacherStartUp}
\left\{ \begin{array}{rcl}
(I - \frac{1}{2}Z) U_{n-1/2,j,k} &=& U_{n-1,j,k}, \\\\
(I - \frac{1}{2}Z) U_{n,j,k} &=& U_{n-1/2,j,k},
\end{array} \right.
\end{equation}
for $n = 1,2,\ldots,\min\{N_{0},N\}$. 
This provides a numerical approximation $U_{N}$ of the exact solution.

The goal of our convergence analysis consists of quantifying the \textit{total error} 
\begin{equation}
\label{eq:TotalErrorDef}
U_{N,j,k} - u(x_{j},y_{k},1).
\end{equation}
To do so, we will analyse the asymptotic behaviour of a mixed discrete/continuous Fourier transform for $h_{1},h_{2},\Delta t$ simultaneously tending to zero.
Applying the inverse Fourier transformation on the resulting error in Fourier space will yield a useful bound for the total error \eqref{eq:TotalErrorDef}.
Special attention will be paid to the influence of $N_{0}$, i.e.\ the influence of Rannacher time stepping, on the total error.

\setcounter{equation}{0}
\section{Asymptotic analysis in Fourier space}\label{results}

We consider a mixed discrete/continuous Fourier transform pair, cf.\ e.g.\ \cite{S89},
\begin{eqnarray*}
 \widehat{V}(\vartheta_{1},\vartheta_{2}) = h_{1}h_{2} \sum_{j= -\infty}^{\infty}\sum_{k= -\infty}^{\infty} V_{j,k} \exp(-\imi j\vartheta_{1}) \exp(-\imi k\vartheta_{2}), \quad &\quad - \pi \leq \vartheta_{1}, \vartheta_{2} \leq \pi,& \\\\
 V_{j,k} = \frac{1}{4\pi^{2}h_{1}h_{2}} \int_{-\pi}^{\pi} \int_{-\pi}^{\pi} \widehat{V}(\vartheta_{1},\vartheta_{2}) \exp(\imi j\vartheta_{1}) \exp(\imi k\vartheta_{2})d\vartheta_{1} d\vartheta_{2}, & \quad j,k \in \mathbb{Z}. &
\end{eqnarray*}
For ease of presentation, the dependency of the Fourier transform on $\vartheta_{1}$ and $\vartheta_{2}$ will be omitted in the notation. 

Fourier transformation of $U_{0,j,k}$ yields $ \widehat{U}_{0} = 1$. 
Concerning operator $Z_{0}$ it follows that
\begin{eqnarray*}
\widehat{Z_{0}V} &=& h_{1}h_{2} \sum_{j= -\infty}^{\infty}\sum_{k= -\infty}^{\infty} Z_{0}V_{j,k}\exp(-\imi j\vartheta_{1}) \exp(-\imi k\vartheta_{2})  \\
		&=& \tfrac{\rho \Delta t}{2} \sum_{j= -\infty}^{\infty}\sum_{k= -\infty}^{\infty} ( V_{j+1,k+1} + V_{j-1,k-1} - V_{j+1,k-1} - V_{j-1,k+1} ) \exp(-\imi j\vartheta_{1}) \exp(-\imi k\vartheta_{2}) \\
		&=& \tfrac{\rho \Delta t}{2} \exp(\imi \vartheta_{1})\exp( \imi \vartheta_{2} ) \sum_{j= -\infty}^{\infty}\sum_{k= -\infty}^{\infty} V_{j+1,k+1} \exp(-\imi(j+1)\vartheta_{1}) \exp(-\imi(k+1)\vartheta_{2}) \\
		&& + \ \tfrac{\rho \Delta t}{2} \exp(-\imi \vartheta_{1})\exp( -\imi \vartheta_{2} ) \sum_{j= -\infty}^{\infty}\sum_{k= -\infty}^{\infty} V_{j-1,k-1} \exp(-\imi(j-1)\vartheta_{1}) \exp(-\imi(k-1)\vartheta_{2}) \\
		&& - \ \tfrac{\rho \Delta t}{2} \exp(\imi \vartheta_{1})\exp( -\imi \vartheta_{2} ) \sum_{j= -\infty}^{\infty}\sum_{k= -\infty}^{\infty} V_{j+1,k-1} \exp(-\imi(j+1)\vartheta_{1}) \exp(-\imi(k-1)\vartheta_{2}) \\
		&& - \ \tfrac{\rho \Delta t}{2} \exp(-\imi \vartheta_{1})\exp( \imi \vartheta_{2} ) \sum_{j= -\infty}^{\infty}\sum_{k= -\infty}^{\infty} V_{j-1,k+1} \exp(-\imi(j-1)\vartheta_{1}) \exp(-\imi(k+1)\vartheta_{2}) \\
		&=& \tfrac{\rho \Delta t}{2h_{1}h_{2}}\left[\exp(\imi \vartheta_{1})\exp( \imi \vartheta_{2} ) + \exp(-\imi \vartheta_{1})\exp(-\imi \vartheta_{2} ) - \exp(\imi \vartheta_{1})\exp( -\imi \vartheta_{2} ) - \exp(-\imi \vartheta_{1})\exp( \imi \vartheta_{2} ) \right]\widehat{V} \\
		&=& - \tfrac{2 \rho \Delta t}{h_{1}h_{2}} (\sin\vartheta_{1}\sin\vartheta_{2}) \widehat{V}.
\end{eqnarray*}
Analogously one finds
\begin{equation*}
\begin{array}{l}
\widehat{Z_{1}V} = \left( - \tfrac{4\Delta t}{h_{1}^{2}} \sin^{2} \tfrac{\vartheta_{1}}{2} + \imi a_{1} \tfrac{\Delta t}{h_{1}}\sin\vartheta_{1} \right) \widehat{V}, \\\\
\widehat{Z_{2}V} = \left( - \tfrac{4\Delta t}{h_{2}^{2}} \sin^{2} \tfrac{\vartheta_{2}}{2} + \imi a_{2} \tfrac{\Delta t}{h_{2}}\sin\vartheta_{2} \right) \widehat{V}. 
\end{array}
\end{equation*}
Define functions 
\begin{equation*}
\begin{array}{l}
z_{0} = z_{0}(\vartheta_{1},\vartheta_{2}) = - \frac{2 \rho \Delta t}{h_{1}h_{2}} \sin\vartheta_{1}\sin\vartheta_{2}, \\\\
z_{1} = z_{1}(\vartheta_{1},\vartheta_{2}) = - \frac{4\Delta t}{h_{1}^{2}} \sin^{2} \frac{\vartheta_{1}}{2} + \imi a_{1} \frac{\Delta t}{h_{1}}\sin\vartheta_{1}, \\\\
z_{2} = z_{2}(\vartheta_{1},\vartheta_{2}) = - \frac{4\Delta t}{h_{2}^{2}} \sin^{2} \frac{\vartheta_{2}}{2} + \imi a_{2} \frac{\Delta t}{h_{2}}\sin\vartheta_{2},
\end{array}
\end{equation*}
and $z = z_{0} + z_{1} + z_{2}$. Then, Fourier transformation of the implicit Euler scheme (\ref{eq:RannacherStartUp}) gives
$$ \widehat{U}_{n} = \left( \frac{1}{1 - \tfrac{1}{2}z} \right)^{2} \widehat{U}_{n-1}. $$
After some calculations, Fourier transformation of the MCS scheme (\ref{eq:MCSOnModel}) yields
\begin{equation*}
\widehat{U}_{n} = R \widehat{U}_{n-1}, 
\end{equation*}
with
\begin{equation*} 
R = 1 + \frac{z}{p} + \frac{(\theta z_{0} + (\tfrac{1}{2} - \theta) z) z}{p^{2}},  
\end{equation*}
where 
\begin{equation}
\label{eq:defp}
p = (1-\theta z_{1})(1 - \theta z_{2} ).
\end{equation}
Assume that $N_{0} \leq N$. Since $\widehat{U}_{0} = 1$ it follows that
\begin{equation}
\label{eq:FourierTransformNumericalSolution}
\widehat{U}_{N} = R^{N-N_{0}} \left( \frac{1}{1 - \tfrac{1}{2}z} \right)^{2N_{0}}. 
\end{equation}
By applying the inverse Fourier transformation, the numerical approximation at $t=T=1$ can be written as
\begin{eqnarray*}
U_{N,j,k} &=& \frac{1}{4\pi^{2} h_{1}h_{2}} \int_{-\pi}^{\pi}\int_{-\pi}^{\pi} \widehat{U}_{N}(\vartheta_{1},\vartheta_{2})\exp(\imi j\vartheta_{1}) \exp(\imi k\vartheta_{2})  d\vartheta_{1} d\vartheta_{2} \\
			&=&  \frac{1}{4\pi^{2}} \int_{-\pi/h_{2}}^{\pi/h_{2}}\int_{-\pi/h_{1}}^{\pi/h_{1}} \widehat{U}_{N}(\kappa h_{1},\eta h_{2}) \exp(\imi x_{j} \kappa) \exp(\imi y_{k} \eta) d\kappa d\eta,
\end{eqnarray*}
where we made use of the substitutions 
$$\vartheta_{1} = \kappa h_{1}, \quad \vartheta_{2} = \eta h_{2}.$$
From Section \ref{ModelProblem} it can be seen that the exact solution is given by
$$ u(x,y,1) = \frac{1}{4\pi^{2}} \int_{-\infty}^{\infty}\int_{-\infty}^{\infty} \widehat{u}(\kappa,\eta,1) \exp(\imi x \kappa)\exp(\imi y \eta) d\kappa d \eta. $$
In our analysis, we will examine the \textit{Fourier error}
\begin{equation}
\widehat{U}_{N}(\kappa h_{1}, \eta h_{2}) - \widehat{u}(\kappa,\eta,1) \qquad \mbox{for} \ -\pi \leq \kappa h_{1}, \eta h_{2}  \leq \pi.
\label{eq:FourierErrorDef}
\end{equation}
For $h_{1},h_{2}$ tending to zero, the \textit{total error} \eqref{eq:TotalErrorDef} is approximated by
\begin{equation}
\frac{1}{4\pi^{2}} \int_{-\pi/h_{2}}^{\pi/h_{2}}\int_{-\pi/h_{1}}^{\pi/h_{1}} \left( \widehat{U}_{N}(\kappa h_{1}, \eta h_{2}) - \widehat{u}(\kappa,\eta,1) \right) \exp(\imi x_{j} \kappa) \exp(\imi y_{k} \eta) d\kappa d\eta.
\label{eq:ApproxTotalError}
\end{equation} 
Note that expression \eqref{eq:ApproxTotalError} can be viewed as the inverse mixed discrete/continuous Fourier transform of the Fourier error \eqref{eq:FourierErrorDef}.

In Figure \ref{fig:NormTransformExact}, $\vert \widehat{u} \vert$ is shown in the $(\vartheta_{1},\vartheta_{2})$-domain for parameter values $\rho = -0.7, a_{1} = 2, a_{2} = 3$. 
This has to be compared with Figure \ref{fig:NormTransform} where $\vert \widehat{U}_{N} \vert$ is shown for the same parameter values. Discretization is performed with $h_{1}=h_{2}=1/6, \Delta t = 1/8$ and well-known MCS parameters $\theta = 1/3, 1/2, 1$. For the Rannacher time stepping we considered values $N_{0}=0,2.$
From Figure \ref{fig:NormTransformExact} and Figure \ref{fig:NormTransform} it is clear that the difference $\widehat{U}_{N}-\widehat{u}$ has different properties in different regions of the Fourier domain.
These regions are illustrated in Figure \ref{fig:Regions}.

First there is a \textit{low-wavenumber region} \raisebox{.5pt}{\textcircled{\raisebox{-.9pt} {1}}}, where both $\vert \vartheta_{1} \vert$ and $\vert \vartheta_{2} \vert$ are small, in which there is a good agreement between $\widehat{U}_{N}$ and $\widehat{u}$. 
Next, if either $\vert \vartheta_{1} \vert$ or $\vert \vartheta_{2} \vert$ is medium and the other one is small or medium (region \raisebox{.5pt}{\textcircled{\raisebox{-.9pt} {2}}}), then both the Fourier transforms of the numerical solution and analytical solution are negligible.
In the \textit{high-wavenumber region} \raisebox{.5pt}{\textcircled{\raisebox{-.9pt} {3}}}, i.e.\ where both $\vert\vartheta_{1}\vert, \vert\vartheta_{2}\vert$ are large, we observe that the modulus of the Fourier transform $\widehat{u}$ is close to zero. 
The modulus $\vert \widehat{U}_{N}\vert$, however, is strongly dependent on $N_{0}$ and the MCS parameter $\theta$.
For larger values of $\theta$ we see that $\widehat{U}_{N}$ has a larger magnitude in the high-wavenumber region.
Hence, a larger high-wavenumber error can be expected for larger values of $\theta$.
Further we observe that the modulus of $\widehat{U}_{N}$ in the high-wavenumber region is always damped whenever Rannacher time stepping is applied. 
This matches our observations from Figure \ref{fig:DigitalCall2} where unwanted erratic behaviour was avoided by using Rannacher time stepping. 
Finally, we have the case where either $\vert \vartheta_{1} \vert$ or $\vert \vartheta_{2} \vert$ is large but the other one is not. 
In our analysis, the region \raisebox{.5pt}{\textcircled{\raisebox{-.9pt} {4}}} where $\vert \vartheta_{1} \vert$ is large and the region \raisebox{.5pt}{\textcircled{\raisebox{-.9pt} {5}}} where $\vert \vartheta_{2} \vert$ is large will be treated separately.
In both regions the Fourier transform $\widehat{u}$ is negligible but $\widehat{U}_{N}$ has to be further analysed.
In particular, we will show that $\widehat{U}_{N}$ is not negligible if the MCS scheme reduces to the CS scheme.

Following Giles \& Carter \cite{GC06} we will perform an asymptotic analysis of the Fourier error $\widehat{U}_{N}-\widehat{u}$ in each of these (five) disjoint regions which form a partition of the Fourier domain. 
We consider the limit $h_{1},h_{2},\Delta t \rightarrow 0$ and since the same discretization is performed in both spatial directions, 
$$ c = h_{2}/h_{1}$$
\textit{is held fixed}. For ease of presentation we denote $h=h_{1}$. Further, since both the semidiscretization and the time integration are convergent of order two for smooth initial data, it seems natural to keep 
$$\lambda = \Delta t/ h$$ 
\textit{constant}. 
Substitutions $\vartheta_{1} = \kappa h_{1}, \vartheta_{2} = \eta h_{2}$ yield
\begin{subeqnarray}
&& z_{0} = - \tfrac{2\rho\lambda}{ch}\sin \kappa h \sin c \eta h = - \tfrac{\rho\lambda}{ch}( \cos((\kappa - c\eta)h) - \cos((\kappa + c\eta)h)), \slabel{eq:z0}	\\
&& z_{1} = - \tfrac{4\lambda}{h}\sin^{2} \tfrac{\kappa h}{2} + \imi a_{1} \lambda \sin \kappa h = - \tfrac{2\lambda}{h}(1-\cos \kappa h) + \imi a_{1} \lambda \sin \kappa h, \slabel{eq:z1} \\
&& z_{2} = - \tfrac{4\lambda}{c^{2}h}\sin^{2} \tfrac{c\eta h}{2} + \imi a_{2} \tfrac{ \lambda}{c} \sin c\eta h = - \tfrac{2\lambda}{c^{2}h}(1-\cos c\eta h) + \imi a_{2} \tfrac{ \lambda}{c} \sin c\eta h. \slabel{eq:z2}
\label{eq:z0z1z2}
\end{subeqnarray} 
The expressions in (\ref{eq:z0z1z2}) will be used to analyse the asymptotic behaviour of (\ref{eq:FourierTransformNumericalSolution}) as $h \rightarrow 0$. 
\textit{Throughout the analysis, by the notation $\mathcal{O}\left( f(\kappa,\eta,h) \right)$ we shall always mean that the modulus $\vert \cdot \vert$ of the term under consideration is bounded by a positive constant times $f(\kappa,\eta,h)$ where the constant is independent of $\kappa, \eta$ and the mesh width $h$.}
In order to deal with the powers in expression \eqref{eq:FourierTransformNumericalSolution} a $\log$-transformation of $\widehat{U}_{N}$ will be considered.
Since $T=1$, thus $N = 1/(\lambda h)$, it follows that
\begin{subeqnarray}
\log \widehat{U}_{N} &=& (N-N_{0}) \log\left( R  \right) + 2N_{0} \log\left( \tfrac{1}{1-z/2} \right) \slabel{eq:LogTransformFourierTransformNumercialSolution} \\
			&=&  \tfrac{1}{\lambda h} \left[ \log( p^{2} + pz + \theta z_{0}z + (\tfrac{1}{2}-\theta)z^{2}) - 2 \log(p) \right] \slabel{eq:LogTransformFourierTransformNumercialSolution1} \\
			&& + \ N_{0} \left[ 2\log(p) - \log(p^{2} + pz + \theta z_{0}z + (\tfrac{1}{2}-\theta)z^{2}) - 2 \log(1 - \tfrac{1}{2}z) \right].
			\slabel{eq:LogTransformFourierTransformNumercialSolution2}
\end{subeqnarray}

\begin{figure}
\begin{center}
\includegraphics[scale=0.5]{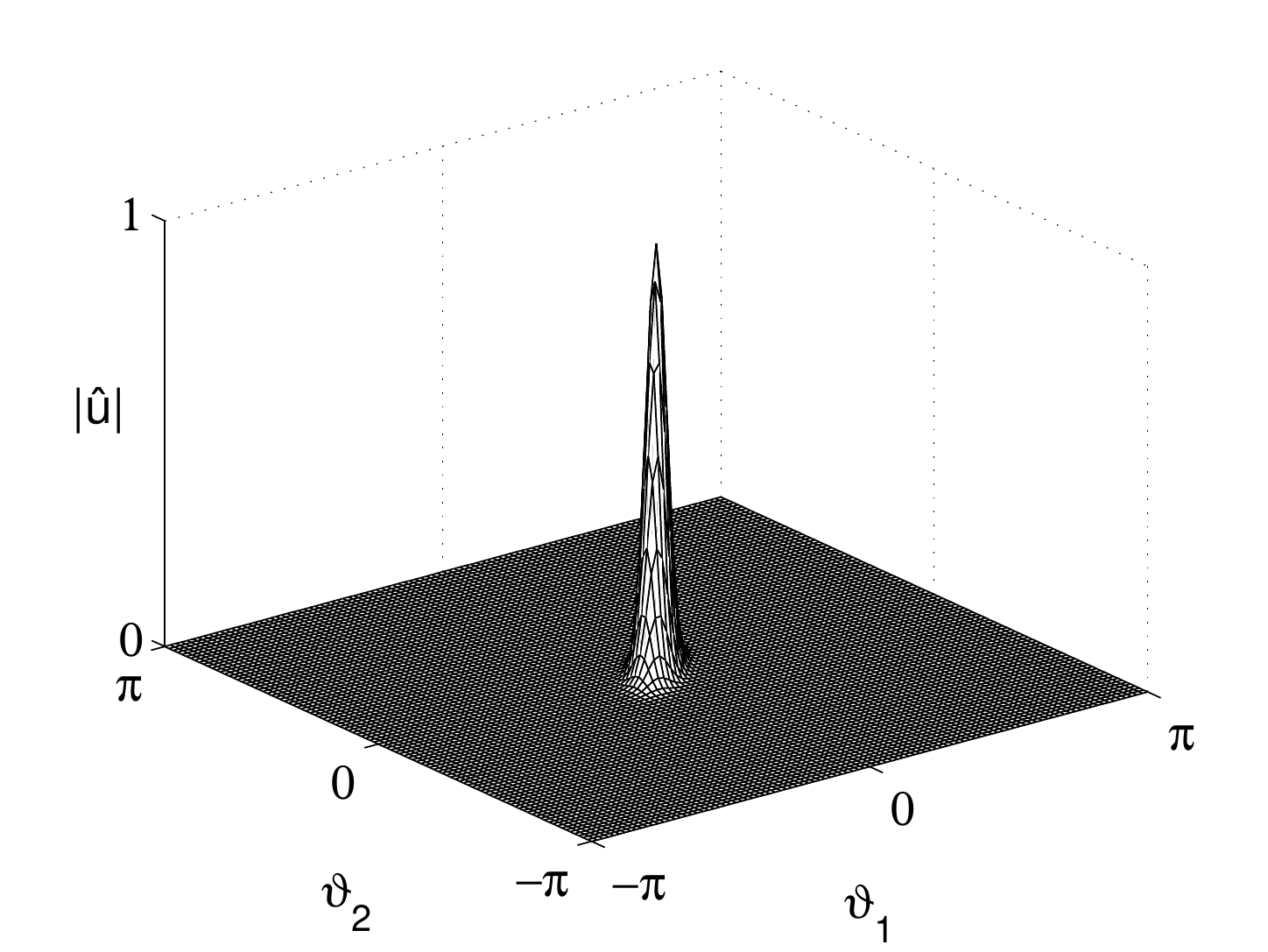} \\
\caption{Magnitude of the Fourier transform of the exact solution $u(x,y,1)$ for parameter values $\rho = -0.7, a_{1} = 2, a_{2} = 3$.}
\label{fig:NormTransformExact}
\end{center}
\end{figure}

\begin{figure}
\begin{center}
\includegraphics[scale=0.5]{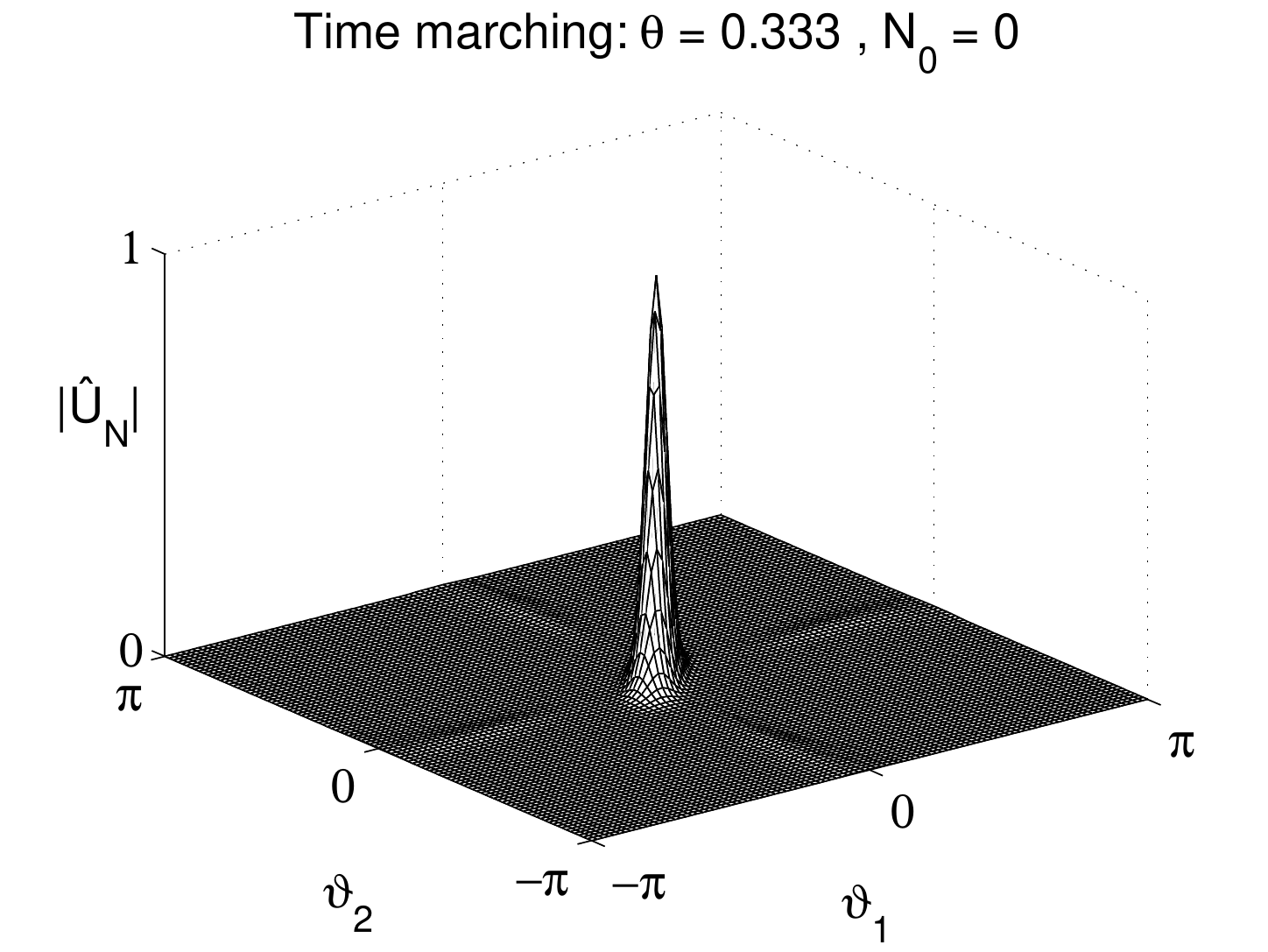} 
\includegraphics[scale=0.5]{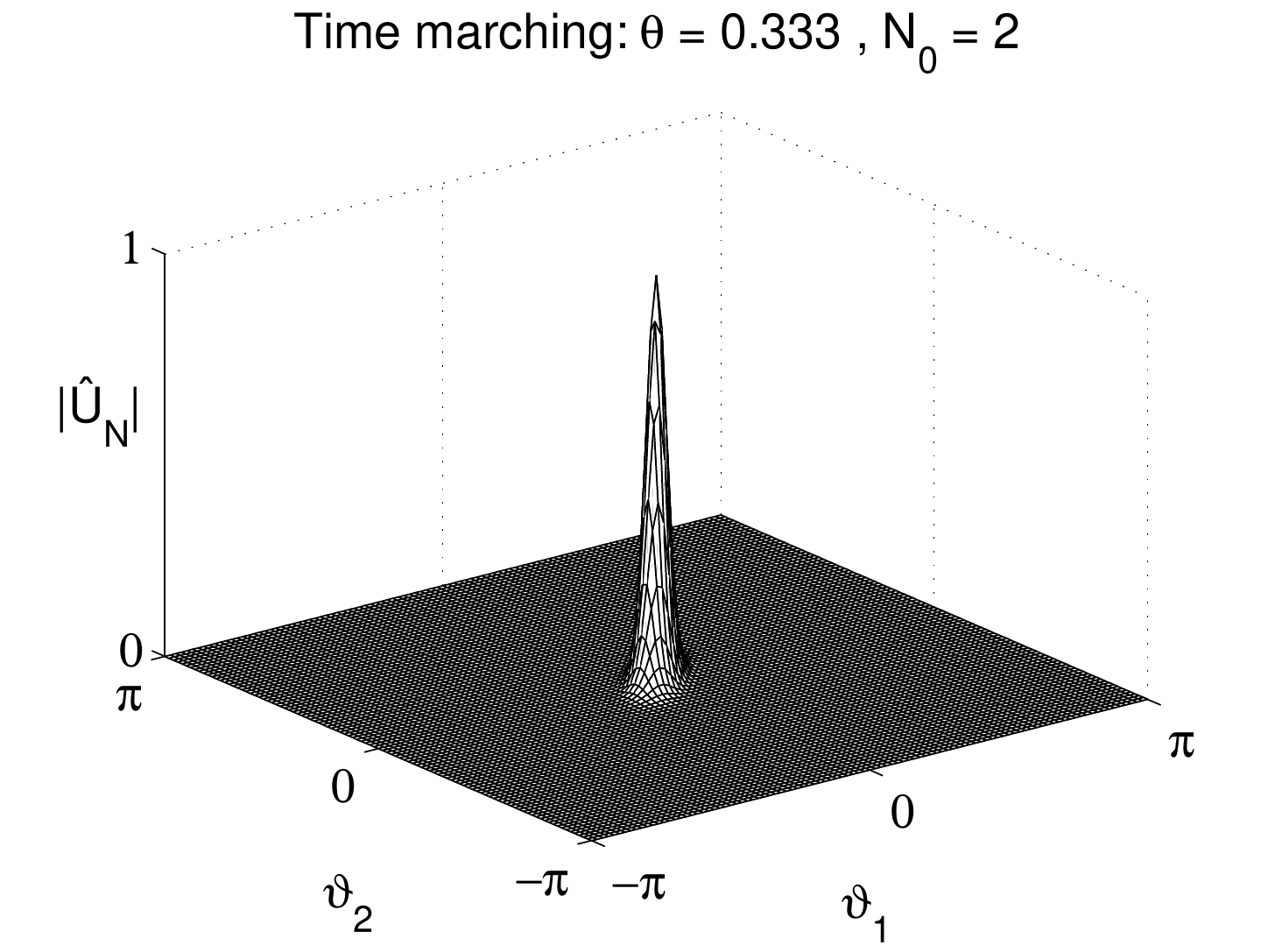} \newline \newline \newline
\includegraphics[scale=0.5]{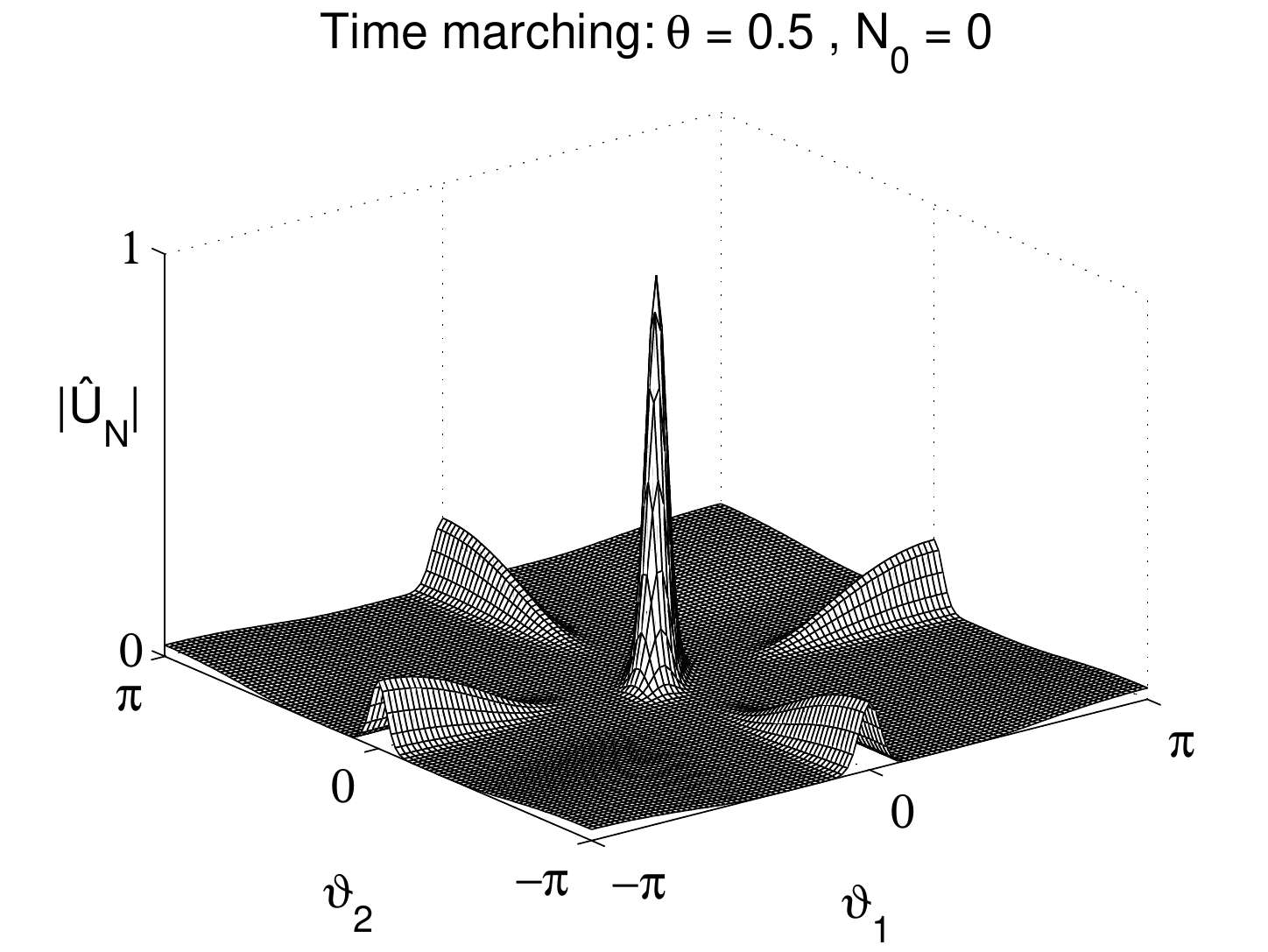} 
\includegraphics[scale=0.5]{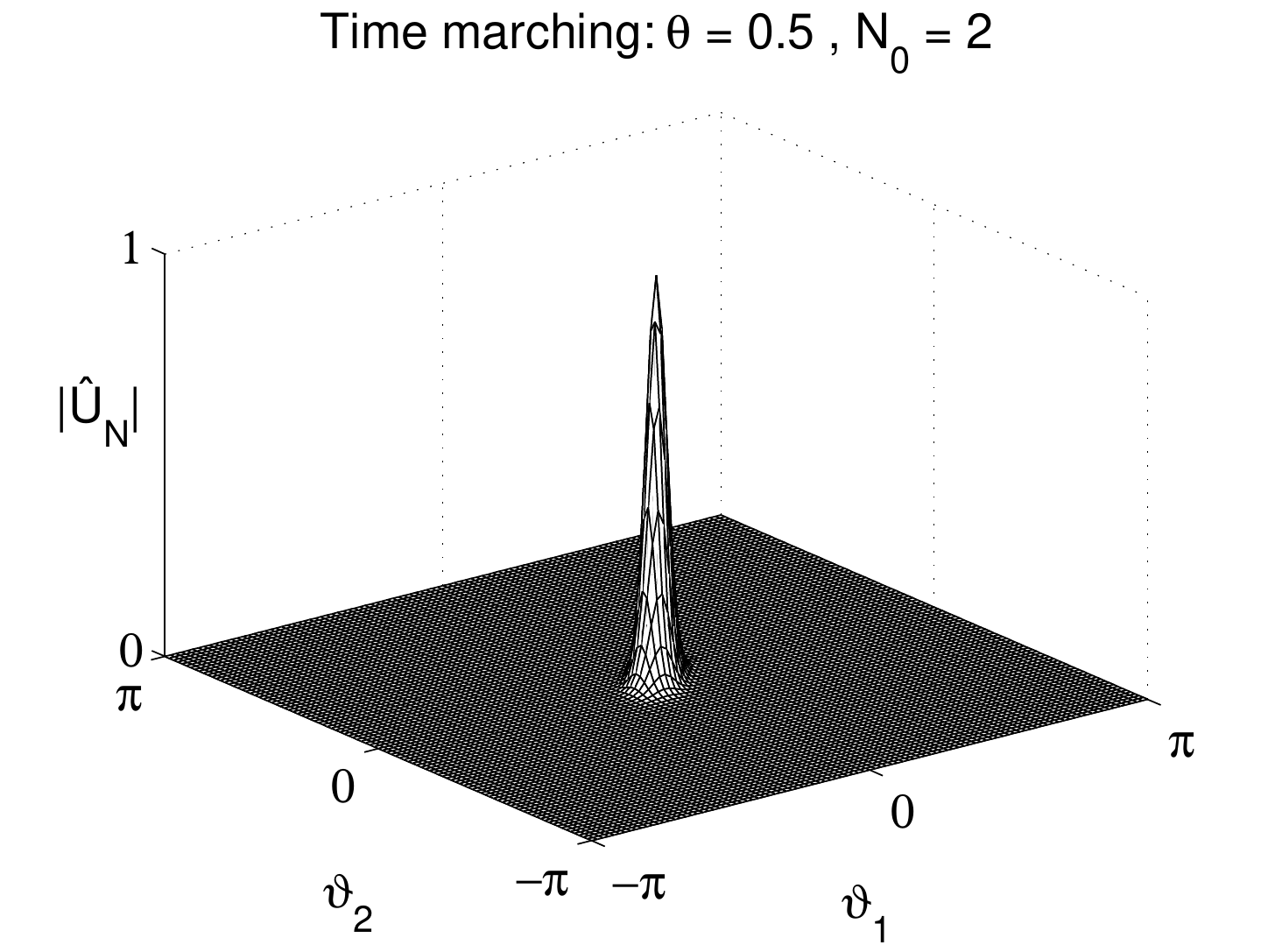} \newline \newline \newline
\includegraphics[scale=0.5]{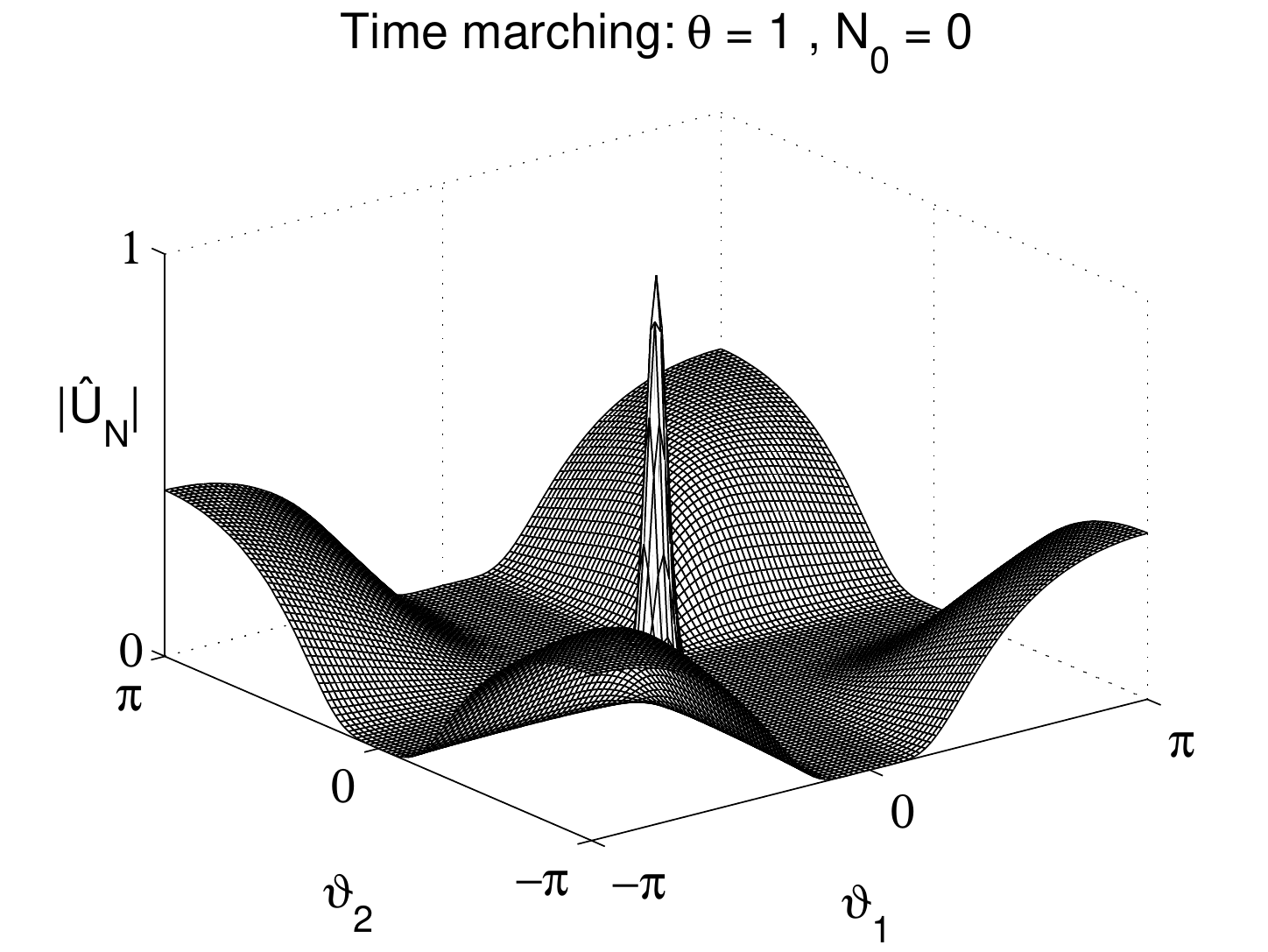} 
\includegraphics[scale=0.5]{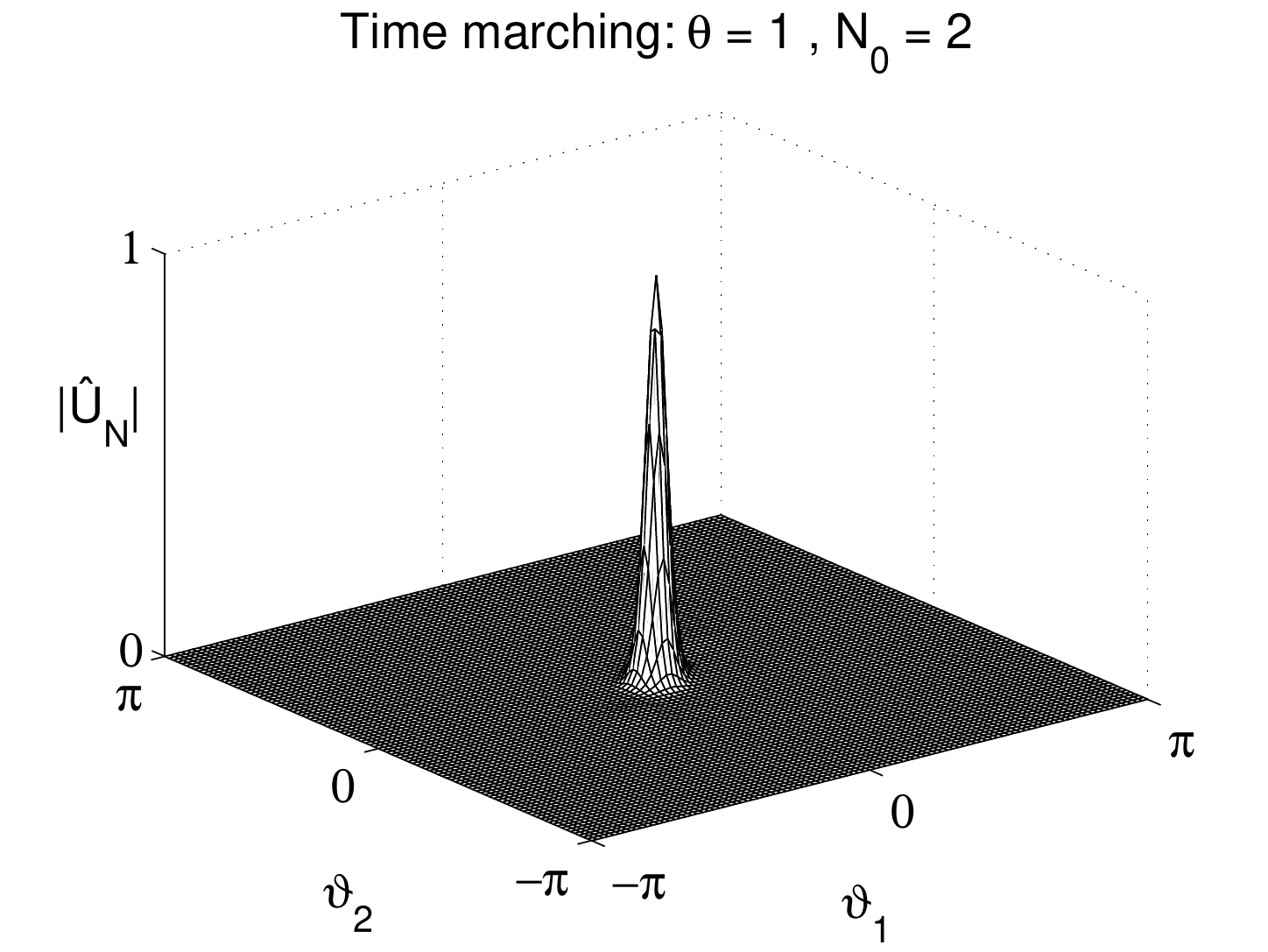}
\caption{Magnitude of the Fourier transform $\widehat{U}_{N}$ with $N_{0}=0$ (left) and $N_{0}=2$ (right) for MCS parameter $\theta = 1/3$ (top), $\theta = 1/2$ (middle) and $\theta = 1$ (bottom). The other parameter values are: $\rho = -0.7, a_{1} = 2, a_{2} = 3, h_{1}=h_{2}=1/6, \Delta t = 1/8$. }
\label{fig:NormTransform}
\end{center}
\end{figure}

\begin{figure}
\begin{center}
\includegraphics[scale=0.5]{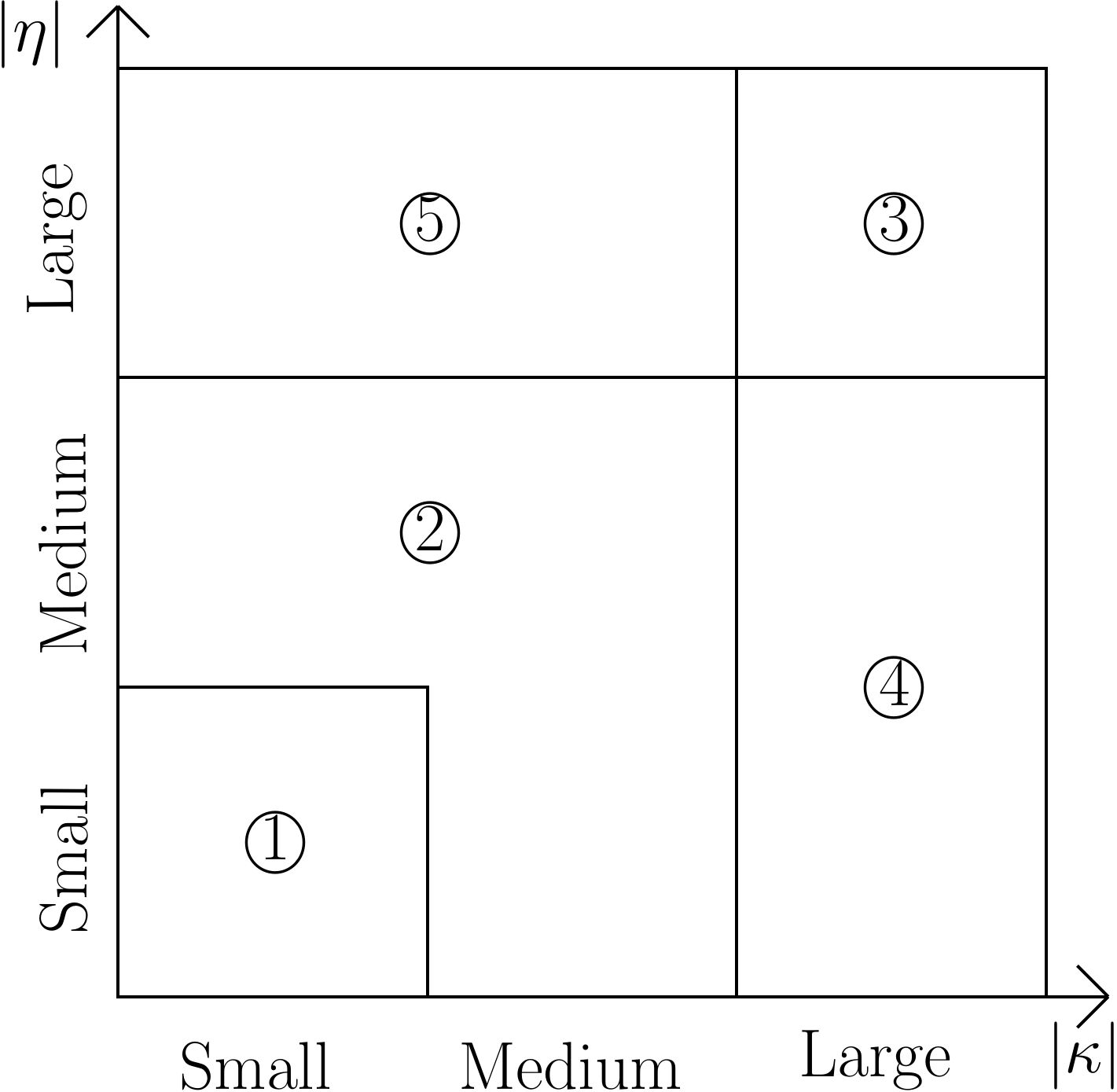} 
\caption{Illustration of the different disjoint regions of the Fourier domain.}
\label{fig:Regions}
\end{center}
\end{figure}

\subsection{Taylor expansion of $\widehat{U}_{N}$}
\label{subsec:Taylor}

Multiple regions will encounter values $\vert \kappa \vert, \vert c\eta \vert \leq h^{-q}$ with certain $q \leq 1/2$. 
By \textit{Taylor expansion} of (\ref{eq:z0}) it directly follows that
\begin{eqnarray*}
z_{0}(h) &=& - \frac{\rho\lambda}{ch} \left( \frac{(\kappa + c\eta)^{2}h^{2}}{2} - \frac{(\kappa - c\eta)^{2}h^{2}}{2} - \frac{(\kappa + c\eta)^{4}h^{4}}{4!} + \frac{(\kappa - c\eta)^{4}h^{4}}{4!} + \cdots \right) \\
	  &=& - \frac{\rho\lambda}{c} \left( 2\kappa c \eta h - \frac{1}{3}(\kappa^{2} + c^{2}\eta^{2} ) \kappa c \eta h^{3} + \cdots	\right) \\
	  &=& z_{0}^{[1]} h + z_{0}^{[3]} h^{3} + z_{0}^{[5]} h^{5},
\end{eqnarray*}
where
\begin{eqnarray*}
z_{0}^{[1]} &=& -2\rho \lambda \kappa \eta,	\\
z_{0}^{[3]} &=& \tfrac{1}{3}\rho \lambda (\kappa^{2} + c^{2}\eta^{2})\kappa\eta,	\\
\vert z_{0}^{[5]} \vert &\leq & \tfrac{4}{6!}\tfrac{\rho\lambda}{c} (\vert \kappa \vert + c \vert \eta \vert )^{6}.  	
\end{eqnarray*}
Analogously as above, Taylor expansion of (\ref{eq:z1}) and (\ref{eq:z2}) yields
\begin{eqnarray*}
z_{1}(h) &=& z_{1}^{[1]} h + z_{1}^{[3]} h^{3} + z_{1}^{[5]} h^{5},  \\
z_{2}(h) &=& z_{2}^{[1]} h + z_{2}^{[3]} h^{3} + z_{2}^{[5]} h^{5}, 
\end{eqnarray*}
where
\begin{eqnarray*}
z_{1}^{[1]} &=&  -\lambda \kappa^{2} + \imi a_{1} \lambda \kappa, \\
z_{1}^{[3]} &=&  \tfrac{1}{12}\lambda\kappa^{4} - \tfrac{1}{6} \imi a_{1} \lambda \kappa^{3}, \\
\vert z_{1}^{[5]} \vert &\leq & \tfrac{2}{6!}\lambda\kappa^{6} + \tfrac{1}{5!} \vert a_{1} \vert \lambda \vert \kappa \vert^{5},   \\\\
z_{2}^{[1]} &=& - \lambda \eta^{2} + \imi a_{2} \lambda \eta,  \\
z_{2}^{[3]} &=& \tfrac{1}{12}\lambda c^{2} \eta^{4} - \tfrac{1}{6} \imi a_{2} \lambda c^{2} \eta^{3},  \\
\vert z_{2}^{[5]} \vert &\leq &  \tfrac{2}{6!} \lambda c^{4}\eta^{6} + \tfrac{1}{5!}\vert a_{2} \vert \lambda c^{4} \vert \eta \vert^{5}. 
\end{eqnarray*}
Since $q \leq 1/2$, it is ensured that all terms in the above expansions stay bounded as $h$ tends to zero.
Using these expansions and the definition \eqref{eq:defp} of $p$ it follows that
\begin{equation*}
p(h) = 1 + p^{[1]}h + p^{[2]}h^{2} + p^{[3]}h^{3} + p^{[4]}h^{4} + p^{[5]}h^{5},
\end{equation*}
where
\begin{eqnarray*}
p^{[1]} &=& -\theta (z_{1}^{[1]} + z_{2}^{[1]}), \\
p^{[2]} &=&  \theta^{2}z_{1}^{[1]}z_{2}^{[1]},  \\
p^{[3]} &=& - \theta (z_{1}^{[3]} + z_{2}^{[3]}), \\
p^{[4]} &=& \theta^{2} (z_{1}^{[1]}z_{2}^{[3]} + z_{1}^{[3]}z_{2}^{[1]}), \\
p^{[5]} &=& \mathcal{O}\left(1+(\kappa^{2} + c^{2}\eta^{2})^{3}\right).
\end{eqnarray*}
Under the condition $\vert \kappa \vert, \vert c\eta \vert \leq h^{-q}$ with certain $q \leq 1/2$, the variables $\kappa$ and $\eta$ can become very large as $h$ tends to zero. In this case the highest powers of $\kappa, \eta$ will dominate the order term in $p^{[5]}$.
Under the same condition, however, $\kappa$ and $\eta$ can both be very small and then the lowest powers of $\kappa,\eta$ will dominate. 
By considering the sum of $1$ and the highest powers of $\kappa,\eta$ in the remaining order term, we ensure that both cases are covered. 

As mentioned above we will make use of $\log$-transformation (\ref{eq:LogTransformFourierTransformNumercialSolution}) to analyse the asymptotic behaviour. Let $f$ be a strictly positive and sufficiently smooth function and set 
$$g(h) = \log(f(h)) \quad \mbox{for} \ h\geq 0.$$
Taylor expansion yields
\begin{equation}
\label{eq:TaylorExpansionLogFunction}
g(h) = \log(f(0)) + g^{[1]}h + g^{[2]}h^{2} + g^{[3]}h^{3} + g^{[4]}h^{4},
\end{equation}
where
\begin{eqnarray*}
g^{[1]} &=& \tfrac{f'(0)}{f(0)}, \\
g^{[2]} &=& \tfrac{1}{2}\left( \tfrac{f''(0)}{f(0)} - \tfrac{f'(0)^{2}}{f(0)^{2}} \right), \\
g^{[3]} &=& \tfrac{1}{6}\left( \tfrac{f'''(0)}{f(0)} - 3\tfrac{f'(0)f''(0)}{f(0)^{2}} + 2 \tfrac{f'0)^{3}}{f(0)^{3}} \right), \\
g^{[4]} &=& \tfrac{1}{4!} \left( \tfrac{f^{(4)}(\xi)}{f(\xi)} - \tfrac{4f'(\xi)f'''(\xi) + 3 f''(\xi)^{2}}{f(\xi)^{2}} + 12 \tfrac{f'(\xi)^{2}f''(\xi)}{f(\xi)^{3}} - 6 \tfrac{f'(\xi)^{4}}{f(\xi)^{4}} \right) \quad \mbox{with certain} \ 0 < \xi < h.
\end{eqnarray*}
In order to encounter the first part of (\ref{eq:LogTransformFourierTransformNumercialSolution1}) consider
$$ f_{M}(h) = p(h)^{2} + p(h)z(h) + \theta z_{0}(h)z(h) + (\tfrac{1}{2} - \theta) z(h)^{2}, $$
so that
\begin{eqnarray*}
f_{M}'(h) &=& 2p(h)p'(h) + p'(h)z(h) + p(h)z'(h) + \theta(z_{0}'(h) z(h) + z_{0}(h) z'(h)) + 2(\tfrac{1}{2} - \theta)z(h)z'(h) , \\
f_{M}''(h) &=& 2p'(h)^{2} + 2p(h)p''(h) + p''(h)z(h) + 2p'(h)z'(h) + p(h)z''(h) \\ 
		&& + \ \theta( z_{0}''(h)z(h) + 2 z_{0}'(h)z'(h) + z_{0}(h)z''(h)) + 2(\tfrac{1}{2} - \theta) ( z'(h)^{2} + z(h)z''(h)), \\
f_{M}'''(h) &=& 6p'(h)p''(h) + 2p(h)p'''(h) + p'''(h)z(h) + 3 p''(h)z'(h) + 3 p'(h)z''(h) + p(h)z'''(h) \\
			&& + \ \theta( z_{0}'''(h)z(h) + 3z_{0}''(h)z'(h) + 3z_{0}'(h)z''(h) + z_{0}(h)z'''(h)) \\
			&& + \ 2(\tfrac{1}{2}-\theta)( 3z'(h)z''(h) + z(h)z'''(h)), \\
f_{M}^{(4)}(h) &=& \mathcal{O}\left( 1+(\kappa^{2} + c^{2}\eta^{2} )^{4} \right),
\end{eqnarray*}
and thus
\begin{eqnarray*}
f_{M}(0) &=& 1, \\
f_{M}'(0) &=& 2p'(0) + z'(0), \\
f_{M}''(0) &=& 2p'(0)^{2} + 2p''(0) + 2p'(0)z'(0) + 2 \theta z_{0}'(0) z'(0) + 2(\tfrac{1}{2} - \theta) z'(0)^{2}, \\
f_{M}'''(0) &=& 6p'(0)p''(0) + 2p'''(0) + 3p''(0)z'(0) + z'''(0).
\end{eqnarray*}
Concerning the Rannacher time stepping, define
$$ f_{N_{0}}(h) = 1 - \tfrac{1}{2} z(h), $$
such that
\begin{eqnarray*}
f_{N_{0}}(0) &=& 1, \\
f^{(i)}_{N_{0}}(0) &=& - \tfrac{1}{2} z^{(i)}(0) \qquad \mbox{for } \ i=1,2,3,  \\
f_{N_{0}}^{(4)}(h) &=& \mathcal{O}\left( 1+(\kappa^{2} + c^{2}\eta^{2} )^{2} \right).
\end{eqnarray*}
All of these expressions will be used in the forthcoming subsections, where we analyse the asymptotic behaviour of $\widehat{U}_{N}$ in five different regions of the Fourier domain, i.e.\ the $(\kappa,\eta)$-domain with $\vert \kappa \vert, \vert \eta \vert \leq \pi/h$.

\subsection{Region 1: $\mathbf{\vert \boldsymbol{\kappa} \vert, \vert c \boldsymbol{\eta} \vert \leq h^{-q}}$ with $\mathbf{q < 1/3}$}
\label{Region1}

In order to analyse $\log\widehat{U}_{N}$ in this region, the parts stemming from the MCS scheme and Rannacher time stepping will be considered separately.
Write (\ref{eq:LogTransformFourierTransformNumercialSolution1}) as
\begin{equation*}
\tfrac{1}{\lambda h} \left[ \log( p^{2} + pz + \theta z_{0}z + (\tfrac{1}{2}-\theta)z^{2}) - 2 \log(p) \right] = \tfrac{1}{\lambda h} \left[ \log(f_{M}(h)) - 2 \log(p(h))\right].
\end{equation*}
Using the analysis above it follows that
\begin{equation}
\label{eq:MCSTaylor}
\tfrac{1}{\lambda h} \left[ \log(f_{M}(h)) - 2 \log(p(h))\right] = s^{[0]} + s^{[1]}h + s^{[2]}h^{2} + s^{[3]}h^{3},
\end{equation}
where
\begin{eqnarray*}
\lambda s^{[0]} &=& 2p'(0) + z'(0) - 2p'(0), \\
\lambda s^{[1]} &=& \tfrac{1}{2}\left[ 2p'(0)^{2} + 2p''(0) + 2p'(0)z'(0) + 2\theta z_{0}'(0)z'(0) + 2(\tfrac{1}{2} - \theta )z'(0)^{2} - (2p'(0) + z'(0))^{2} \right] \\
				&& - \ \left[ p''(0) - p'(0)^{2}\right]  \\
\lambda s^{[2]} &=& \tfrac{1}{6} \left[ 6p'(0)p''(0) + 2p'''(0) + 3p''(0)z'(0) + z'''(0) \right. \\
				&& - \ 3(2p'(0) + z'(0))(2p'(0)^{2} + 2p''(0) + 2p'(0)z'(0) + 2\theta z_{0}'(0)z'(0) + 2(\tfrac{1}{2} - \theta)z'(0)^{2} ) \\
				&& \left. + \ 2 (2p'(0) + z'(0))^{3} \right] - \tfrac{1}{3} \left[ p'''(0) -3p'(0)p''(0) + 2p'(0)^{3} \right]  \\
s^{[3]} &=& \mathcal{O}\left( 1+(\kappa^{2} + c^{2}\eta^{2})^{4} \right).
\end{eqnarray*}
By using the expansions in Subsection \ref{subsec:Taylor} and after simplifying the resulting expressions, one gets
\begin{eqnarray*}
s^{[0]} &=& - \kappa^{2} - 2\rho\kappa\eta - \eta^{2} + \imi a_{1}\kappa + \imi a_{2} \eta, \\
s^{[1]} &=& 0, \\
s^{[2]} &=& \tfrac{1}{12} \kappa^{4} + \tfrac{1}{3} \rho (\kappa^{2} + c^{2}\eta^{2} ) \kappa \eta + \tfrac{1}{12} c^{2}\eta^{4} - \tfrac{1}{6} \imi a_{1} \kappa^{3} - \tfrac{1}{6} \imi a_{2} c^{2} \eta^{3}  \\
				&& - \ \lambda^{2}\theta^{2} (- \kappa^{2} + \imi a_{1}\kappa ) (- \eta^{2} + \imi a_{2} \eta) (-\kappa^{2} -2\rho \kappa \eta - \eta^{2} + \imi a_{1}\kappa + \imi a_{2} \eta) \\
				&& + \ \tfrac{\lambda^{2}}{12} (-\kappa^{2} -2\rho \kappa \eta - \eta^{2} + \imi a_{1}\kappa + \imi a_{2} \eta)^{3} \\
				&& - \ \lambda^{2} (-\kappa^{2} -2\rho \kappa \eta - \eta^{2} + \imi a_{1}\kappa + \imi a_{2} \eta)( - \rho \kappa \eta + (\tfrac{1}{2} - \theta) (-\kappa^{2} - \eta^{2} + \imi a_{1}\kappa + \imi a_{2} \eta) )^{2}.
\end{eqnarray*}
As for the part  stemming from the Rannacher time stepping, write (\ref{eq:LogTransformFourierTransformNumercialSolution2}) as
\begin{equation*}
N_{0}[2 \log(p) - \log(f_{M}(h)) - 2 \log(f_{N_{0}}(h)) ].
\end{equation*}
Using the same analysis as above one gets
\begin{equation}
\label{eq:RannacherTaylor}
N_{0}[2 \log(p) - \log(f_{M}(h)) - 2 \log(f_{N_{0}}(h)) ] = N_{0}^{[1]}h + N_{0}^{[2]}h^{2} + N_{0}^{[3]}h^{3},
\end{equation}
where
\begin{eqnarray*}
N_{0}^{[1]} &=& N_{0} \left[ - \lambda s^{[0]} - 2(-\tfrac{1}{2} z'(0)) \right] = 0, \\
N_{0}^{[2]} &=& N_{0} \left[ - \lambda s^{[1]} - (-\tfrac{1}{2} z''(0) -  \tfrac{1}{4} z'(0)^{2} ) \right] = \tfrac{1}{4} N_{0} z'(0)^{2} \\
		&=& \tfrac{1}{4} N_{0} \lambda^{2} (-\kappa^{2} -2\rho \kappa \eta - \eta^{2} + \imi a_{1}\kappa + \imi a_{2} \eta)^{2}, \\
N_{0}^{[3]} &=& \mathcal{O}\left( 1+(\kappa^{2} + c^{2}\eta^{2} )^{3} \right).
\end{eqnarray*}
By combining (\ref{eq:LogTransformFourierTransformNumercialSolution}), (\ref{eq:MCSTaylor}) and (\ref{eq:RannacherTaylor}) it directly follows that 
\begin{equation*}
\log \widehat{U}_{N} = -\kappa^{2} -2\rho \kappa \eta - \eta^{2} + \imi a_{1}\kappa + \imi a_{2} \eta + (s^{[2]} + N_{0}^{[2]})h^{2} + (s^{[3]} + N_{0}^{[3]})h^{3},
\end{equation*}
and hence
\begin{equation*}
\widehat{U}_{N} = \exp(-\kappa^{2} -2\rho \kappa \eta - \eta^{2} + \imi a_{1}\kappa + \imi a_{2} \eta)\exp((s^{[2]} + N_{0}^{[2]})h^{2} + (s^{[3]} + N_{0}^{[3]})h^{3}).
\end{equation*}
Next, we will expand the second exponential in order to compare this expression with the Fourier transform $\widehat{u}$ from (\ref{eq:FourierTransformExact}) at $t=1$.
Let
$$ e(h) = \exp( c^{[2]} h^{2} + c^{[3]} h^{3}), $$
where
$$ c^{[2]} = \mathcal{O}\left( 1+(\kappa^{2} + c^{2}\eta^{2} )^{3} \right), \ c^{[3]} = \mathcal{O}\left( 1+(\kappa^{2} + c^{2}\eta^{2} )^{4} \right), $$
then
\begin{eqnarray*}
e'(h) &=& (2c^{[2]} h + 3 c^{[3]} h^{2} ) e(h), \\
e''(h) &=& (2c^{[2]} + 6c^{[3]}h)e(h) + (2c^{[2]} h + 3 c^{[3]} h^{2} )^{2} e(h), \\
e'''(h) &=& 6c^{[3]} e(h) + 3(2c^{[2]} + 6c^{[3]} h)(2c^{[2]} h + 3 c^{[3]} h^{2} )e(h) + (2c^{[2]} h + 3 c^{[3]} h^{2} )^{3} e(h).
\end{eqnarray*}
Since $\vert \kappa \vert, \vert c \eta \vert \leq h^{-q}$ with $q < 1/3$, we have that $e(0)=1$ and $\vert e(h) \vert \leq \exp(1)$ whenever $h$ is sufficiently small. 
Hence it follows that
$$ e(h) = 1 + e^{[2]} h^{2} + e^{[3]} h^{3}, $$
with
\begin{eqnarray*}
e^{[2]} &=& c^{[2]}, \\
e^{[3]} &=& \mathcal{O}\left( 1+(\kappa^{2} + c^{2}\eta^{2})^{4} \right) + \mathcal{O}\left( 1+(\kappa^{2} + c^{2}\eta^{2})^{6} \right)h  + \mathcal{O}\left( 1+(\kappa^{2}+c^{2}\eta^{2})^{9}  \right)h^{3}  \\
		&=& \mathcal{O}\left( 1+(\kappa^{2} + c^{2}\eta^{2})^{4} \right) + \mathcal{O}\left( 1+(\kappa^{2} + c^{2}\eta^{2})^{6} \right)h,
\end{eqnarray*}
where the latter equality follows from the assumption $\vert \kappa \vert, \vert c \eta \vert \leq h^{-q}$ with $q < 1/3$.
Finally, for this region, one arrives at the following expression for the Fourier error \eqref{eq:FourierErrorDef}:
\begin{equation}
h^{2}\widehat{u}(\kappa,\eta,1)\left( (s^{[2]} + N_{0}^{[2]}) + \mathcal{O}\left( 1+(\kappa^{2} + c^{2}\eta^{2})^{4} \right)h + \mathcal{O}\left( 1+(\kappa^{2} + c^{2}\eta^{2})^{6} \right)h^{2}\right).
\label{eq:LowWavenumberTransform}
\end{equation}
Note that $s^{[2]}$ and $N_{0}^{[2]}$ actually depend on $\kappa$ and $\eta$. For ease of presentation, this is omitted in the notation.

\subsection{Region 2: $\mathbf{\vert \boldsymbol{\kappa} \vert \leq h^{-q_{1}}, \vert c \boldsymbol{\eta} \vert \leq h^{-q_{2}}}$ with $\mathbf{q_{1},q_{2} \leq 1/2}$ and with $\mathbf{q_{1} \geq 1/3}$ or $\mathbf{q_{2} \geq 1/3}$}

First consider the case where both $q_{1} < 1/2$ and $q_{2} < 1/2$.
Based on the analysis in Subsection \ref{Region1}, expression (\ref{eq:LogTransformFourierTransformNumercialSolution1}) can be rewritten as
$$ N\log(R) = \tfrac{1}{\lambda h} \left[ \log( p^{2} + pz + \theta z_{0}z + (\tfrac{1}{2}-\theta)z^{2}) - 2\log(p) \right] = s^{[0]} + s^{[2']}h^{2},  $$
where
\begin{eqnarray*}
s^{[0]} &=& -\kappa^{2} - 2\rho\kappa\eta - \eta^{2} + \imi a_{1}\kappa + \imi a_{2} \eta, \\
s^{[2']} &=& \mathcal{O}\left( (\kappa^{2} + c^{2} \eta^{2})^{3} \right).
\end{eqnarray*} 
Since either $\kappa$ or $\eta$ becomes large in this region as $h$ tends to zero, only the highest powers of $\kappa,\eta$ are taken into account in the order term in $s^{[2']}$.
From $\vert \rho \vert < 1 $ one gets
$$ \kappa^{2} + 2\rho \kappa \eta + \eta^{2} = (1 - \vert \rho \vert) (\kappa^{2} + \eta^{2} ) + \vert \rho \vert ( \kappa + \text{sgn}(\rho) \eta)^{2} > 0, $$
such that
$$  \mathcal{R}(s^{[0]}) \leq -(1- \vert \rho \vert )(\kappa^{2} + \eta^{2} ) < 0. $$
Using that both both $q_{1} < 1/2$ and $q_{2} < 1/2$, it directly follows that
$$ \lim_{h \rightarrow 0} (\kappa^{2} + c^{2}\eta^{2} )^{2}h^{2} = 0, $$
and thus
$$ \exists \ \delta > 0 \quad \exists \ h_{0} >0 \quad \forall \ h \leq h_{0}: \ \mathcal{R}( N\log(R) )  \leq - \delta (\kappa^{2} + \eta^{2}).  $$
Hence, for $h \leq h_{0}$ 
$$ \vert R^{N} \vert \leq \exp( - \delta (\kappa^{2} + \eta^{2}) ), $$
and since $\vert \kappa \vert \geq h^{-1/3}$ or $\vert c\eta \vert \geq h^{-1/3}$ we may conclude
\begin{equation}
 \vert R^{N} \vert = \mathcal{O}\left( h^{w} \right) \quad \forall w>0 .
\label{eq:Region2Deel1}
\end{equation}
Next, consider the case where at least one of the equalities, $q_{1}=1/2$ or $q_{2} = 1/2$, holds.
For analysing the asymptotic behaviour of $R$ we then make use of the following proposition. 
Its proof is a direct modification of the proof of one of the statements in \cite[Theorem 1]{IHM10} and is therefore omitted.

\begin{proposition}
\label{prop:Stabiliteit}
Let $\widetilde{z_{0}},\widetilde{z_{1}},\widetilde{z_{2}}$ denote real numbers with 
\begin{equation} 
\label{eq:ScalarRestrictions}
\widetilde{z_{1}} \leq 0, \qquad \widetilde{z_{2}} \leq 0, \qquad \vert \widetilde{z_{0}} \vert \leq 2 \vert \rho \vert \sqrt{\widetilde{z_{1}}\widetilde{z_{2}}}, 
\end{equation}
and $\vert \rho \vert < 1$. Set $ \widetilde{z} := \widetilde{z_{0}} + \widetilde{z_{1}} + \widetilde{z_{2}} $ and $\widetilde{p} := (1-\theta \widetilde{z_{1}})(1-\theta \widetilde{z_{2}})$. 
If $\widetilde{z_{1}}<0$ or $\widetilde{z_{2}}<0$, then
$$ \left\vert \frac{\widetilde{p}^{2} + \widetilde{p_{}}\widetilde{z} + \theta \widetilde{z_{0}}\widetilde{z} + (\tfrac{1}{2} - \theta)\widetilde{z}^{2}}{\widetilde{p}^{2}}  \right\vert < 1, $$
whenever $\theta \geq \tfrac{1}{4}$ and $\theta > \tfrac{\vert \rho \vert + 1}{6}.$
\end{proposition}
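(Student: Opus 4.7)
The plan is to reduce $|R|<1$ (with $R$ denoting the fraction in the statement) to two polynomial positivity questions and then verify each under the stated hypotheses, following the template of \cite[Theorem~1]{IHM10}. First I normalize the variables: set $x = -\widetilde{z_1}\geq 0$, $y = -\widetilde{z_2}\geq 0$ (with $x+y>0$) and write $\widetilde{z_0} = 2\gamma\sqrt{xy}$ for some real $\gamma$ with $|\gamma|\leq |\rho|<1$. AM--GM then gives $\widetilde{z} = 2\gamma\sqrt{xy}-(x+y)<0$ and $\widetilde{p}=(1+\theta x)(1+\theta y)\geq 1$.

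Next I factor. Setting $B := \widetilde{p} + \theta\widetilde{z_0} + (\tfrac{1}{2}-\theta)\widetilde{z}$, a direct computation on the numerator gives
\[
1-R = \frac{-\widetilde{z}\,B}{\widetilde{p}^2}, \qquad 1+R = \frac{2\widetilde{p}^2 + \widetilde{z}\,B}{\widetilde{p}^2}.
\]
Since $-\widetilde{z}>0$ and $\widetilde{p}>0$, the strict bound $|R|<1$ is equivalent to the pair of inequalities $B>0$ and $2\widetilde{p}^2 + \widetilde{z}\,B > 0$; the alternative sign configuration $B<0$, $2\widetilde{p}^2 + \widetilde{z}\,B<0$ is impossible since it would force $\widetilde{z}\,B > 0$.

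For the first inequality, introduce $s=x+y$, $u=\sqrt{xy}$ with $s\geq 2u\geq 0$. Then $B = 1 + (2\theta-\tfrac12)s + \theta^2 u^2 + \gamma u$; the hypothesis $\theta\geq 1/4$ makes the coefficient of $s$ nonnegative, so that substituting the extremal $s=2u$ and worst-case $\gamma = -|\rho|$ reduces the claim to positivity on $u\geq 0$ of the quadratic $\theta^2 u^2 + (4\theta-1-|\rho|)u + 1$. Its discriminant $(4\theta-1-|\rho|)^2 - 4\theta^2$ is strictly negative precisely when $\theta > (|\rho|+1)/6$; the complementary range $\theta \geq (|\rho|+1)/2$ is handled trivially because the linear coefficient is then nonnegative. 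Hence $B>0$.

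The second inequality $2\widetilde{p}^2 + \widetilde{z}\,B > 0$ is where I expect the main obstacle, and it is the place at which the proof of \cite[Theorem~1]{IHM10} needs to be modified. It amounts to the polynomial inequality
\[
2(1+\theta s+\theta^2 u^2)^2 > (s-2\gamma u)\bigl(1 + (2\theta-\tfrac12)s + \theta^2 u^2 + \gamma u\bigr)
\]
for all $s\geq 2u\geq 0$ and $|\gamma|<1$. The strategy is to match monomials in $(s,u)$ on both sides and to invoke the already-established positivity of $B$ together with the constraints $\theta\geq 1/4$ and $\theta>(|\rho|+1)/6$ in order to dominate the worst-case signs of $\gamma$. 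Apart from this careful bookkeeping of $\theta$- and $\gamma$-dependent coefficients, the argument is a direct adaptation of the quadratic-form estimates in \cite{IHM10}.
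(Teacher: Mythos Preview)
The paper does not give a proof of this proposition: it states that the argument is a direct modification of the proof of \cite[Theorem~1]{IHM10} and omits the details. Your proposal follows exactly that route and in fact supplies more than the paper does. Your reduction of $|R|<1$ to the pair $B>0$ and $2\widetilde{p}^{2}+\widetilde{z}B>0$ is correct (the factorization of $1\pm R$ is easily checked), and your treatment of $B>0$ via the substitution $s=x+y$, $u=\sqrt{xy}$ and the discriminant computation is clean and matches the kind of argument used in \cite{IHM10}. For the second inequality you, like the paper, defer to \cite{IHM10}; that is acceptable here since the paper does the same, but be aware that this is the only place where a reader cannot verify the claim from your text alone. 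In short, your approach coincides with the paper's (both point to \cite{IHM10}), with the added value that you have spelled out the normalization and the $B>0$ step explicitly.
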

Recall that in the current region of the Fourier domain the assumption $\vert \kappa \vert \leq h^{-q_{1}}, \vert c \eta \vert \leq h^{-q_{2}}$ with $q_{1},q_{2} \leq 1/2$ holds. This yields
\begin{eqnarray*}
&& \lim_{h \rightarrow 0} z_{0}(h) = \lim_{h \rightarrow 0} - 2 \rho \lambda \kappa \eta h =: \widetilde{z_{0}} \in \R, \\
&& \lim_{h \rightarrow 0} z_{1}(h) = \lim_{h \rightarrow 0} - \lambda \kappa^{2} h =: \widetilde{z_{1}} \in \R^{-}, \\
&& \lim_{h \rightarrow 0} z_{2}(h) = \lim_{h \rightarrow 0} - \lambda \eta^{2} h =: \widetilde{z_{2}} \in \R^{-}.
\end{eqnarray*}
Since $\vert \kappa \vert = h^{-1/2}$ or $\vert c\eta \vert = h^{-1/2}$ it follows that $z_{1}^{*} < 0$ or $z_{2}^{*} <0 $.
Hence, all the assumptions on $\widetilde{z_{0}}, \widetilde{z_{1}}, \widetilde{z_{2}}$ in Proposition \ref{prop:Stabiliteit} are fulfilled such that
$$ \lim_{h \rightarrow 0} \vert R \vert  =  \left\vert \frac{\widetilde{p}^{2} + \widetilde{p_{}}\widetilde{z} + \theta \widetilde{z_{0}}\widetilde{z} + (\tfrac{1}{2} - \theta)\widetilde{z}^{2}}{\widetilde{p}^{2}}  \right\vert < 1, $$
and thus
\begin{equation}
\vert R^{N} \vert = \vert R \vert^{1/(\lambda h)} = \mathcal{O}(h^{w}) \quad \forall w>0, 
\label{eq:Region2Deel2}
\end{equation}
for
\begin{equation}
\theta \geq \tfrac{1}{4} \quad \mbox{and} \quad \theta > \tfrac{1+\vert \rho \vert}{6}.
\label{eq:ConditionsOnTheta}
\end{equation}
Further, it always holds that $\mathcal{R}(z) \leq 0$ such that 
$$ \vert 1 - \tfrac{1}{2}z \vert^{-1} \leq 1.$$
By combining this with \eqref{eq:Region2Deel1} and \eqref{eq:Region2Deel2}, and by using that $N_{0}$ is independent from $h$, one may conclude that in this region it holds that
$$\vert \widehat{U}_{N} \vert = \vert R^{N} \vert \vert R^{-N_{0}} \vert \vert 1 - \tfrac{1}{2}z \vert^{-2N_{0}} = \mathcal{O}\left( h^{w} \right) \quad \forall w >0,$$
under restriction \eqref{eq:ConditionsOnTheta} on $\theta$.
This means that $\vert \widehat{U}_{N} \vert$ quickly becomes negligible as $h$ tends to zero. It decays faster to zero than any polynomial in $h$.

\subsection{Region 3: $\mathbf{\vert \boldsymbol{\kappa} \vert, \vert c \boldsymbol{\eta} \vert \geq h^{-q}}$ with $\mathbf{q > 1/2}$}
\label{subsec:HighWaveTransf}

Here we reconsider the substitutions $\vartheta_{1} = \kappa h_{1} = \kappa h, \ \vartheta_{2} = \eta h_{2} = \eta c h$ in order to get
\begin{eqnarray*}
z_{0} &=& - 2 \rho \tfrac{\lambda}{ch} \sin \vartheta_{1} \sin \vartheta_{2}, \\
z_{1} &=& - 4 \tfrac{\lambda}{h} \sin^{2} \tfrac{\vartheta_{1}}{2} + \imi a_{1} \lambda \sin \vartheta_{1}, \\
z_{2} &=& - 4 \tfrac{\lambda}{c^{2}h} \sin^{2} \tfrac{\vartheta_{2}}{2} + \imi a_{2} \tfrac{\lambda}{c} \sin \vartheta_{2}.
\end{eqnarray*}
Further, in this region $\vartheta_{1},\vartheta_{2}$ are different from zero and since we consider values $-\pi \leq \vartheta_{1},\vartheta_{2} \leq \pi$, we may write
\begin{eqnarray*}
\frac{c^{2}}{16\tfrac{\lambda^{2}}{h^{2}}\sin^{2} \tfrac{\vartheta_{1}}{2}\sin^{2} \tfrac{\vartheta_{2}}{2}}z_{0} &=& - \rho \frac{c \cot \tfrac{\vartheta_{1}}{2}\cot \tfrac{\vartheta_{2}}{2}}{2 \lambda}h, \\
\frac{c^{2}}{16\tfrac{\lambda^{2}}{h^{2}}\sin^{2} \tfrac{\vartheta_{1}}{2}\sin^{2} \tfrac{\vartheta_{2}}{2}}z_{1} &=& - \frac{c^{2}}{4\lambda \sin^{2} \tfrac{\vartheta_{2}}{2}}h + \imi a_{1} \frac{c^{2}\cot\tfrac{\vartheta_{1}}{2}}{8\lambda \sin^{2} \tfrac{\vartheta_{2}}{2}}h^{2}, \\
\frac{c^{2}}{16\tfrac{\lambda^{2}}{h^{2}}\sin^{2} \tfrac{\vartheta_{1}}{2}\sin^{2} \tfrac{\vartheta_{2}}{2}}z_{2} &=& - \frac{1}{4\lambda \sin^{2} \tfrac{\vartheta_{1}}{2}}h + \imi a_{2} \frac{c \cot\tfrac{\vartheta_{2}}{2}}{8\lambda \sin^{2} \tfrac{\vartheta_{1}}{2}}h^{2}, \\
\frac{1}{4\tfrac{\lambda}{h} \sin^{2} \tfrac{\vartheta_{1}}{2}}(1-\theta z_{1}) &=& \theta + \left( \frac{1}{4\lambda \sin^{2} \tfrac{\vartheta_{1}}{2}} - \theta \imi a_{1} \frac{ \cot \tfrac{\vartheta_{1}}{2}}{2}\right)h, \\ 
\frac{c^{2}}{4\tfrac{\lambda}{h} \sin^{2} \tfrac{\vartheta_{2}}{2}}(1-\theta z_{2}) &=& \theta + \left( \frac{c^{2}}{4\lambda \sin^{2} \tfrac{\vartheta_{2}}{2}} - \theta \imi a_{2} \frac{ c \cot \tfrac{\vartheta_{2}}{2}}{2}\right) h,
\end{eqnarray*}
and thus
\begin{eqnarray*}
\frac{c^{2}}{16\tfrac{\lambda^{2}}{h^{2}}\sin^{2} \tfrac{\vartheta_{1}}{2}\sin^{2} \tfrac{\vartheta_{2}}{2}} p &=& \theta^{2} +  \theta \left( \frac{1}{4\lambda \sin^{2} \tfrac{\vartheta_{1}}{2}} - \theta \imi a_{1} \frac{ \cot \tfrac{\vartheta_{1}}{2}}{2} + \frac{c^{2}}{4\lambda \sin^{2} \tfrac{\vartheta_{2}}{2}} - \theta \imi a_{2} \frac{ c \cot \tfrac{\vartheta_{2}}{2}}{2} \right) h \\
		&& + \ \left( \frac{1}{4\lambda \sin^{2} \tfrac{\vartheta_{1}}{2}} - \theta \imi a_{1} \frac{ \cot \tfrac{\vartheta_{1}}{2}}{2}  \right) \left( \frac{c^{2}}{4\lambda \sin^{2} \tfrac{\vartheta_{2}}{2}} - \theta \imi a_{2} \frac{ c \cot \tfrac{\vartheta_{2}}{2}}{2}  \right)h^{2}.
\end{eqnarray*}
Making use of an expansion similar to (\ref{eq:TaylorExpansionLogFunction}) it follows that
\begin{eqnarray*}
&& \log \left[ \left(  \frac{c^{2}}{16\tfrac{\lambda^{2}}{h^{2}}\sin^{2} \tfrac{\vartheta_{1}}{2}\sin^{2} \tfrac{\vartheta_{2}}{2}}  \right)^{2} \left(  p^{2} + pz + \theta z_{0}z + (\tfrac{1}{2} - \theta)z^{2}   \right) \right] \\
&=& \log \theta^{4} \\
&& + \ \left[ 2 \theta^{3} \left( \frac{1}{4\lambda \sin^{2} \tfrac{\vartheta_{1}}{2}} - \theta \imi a_{1} \frac{ \cot \tfrac{\vartheta_{1}}{2}}{2} + \frac{c^{2}}{4\lambda \sin^{2} \tfrac{\vartheta_{2}}{2}} - \theta \imi a_{2} \frac{ c \cot \tfrac{\vartheta_{2}}{2}}{2}  \right) \right. \\
&& - \ \left. \theta^{2} \rho \frac{c \cot \tfrac{\vartheta_{1}}{2}\cot \tfrac{\vartheta_{2}}{2}}{2\lambda} - \theta^{2} \frac{1}{4\lambda \sin^{2}\tfrac{\vartheta_{1}}{2}}  - \theta^{2} \frac{c^{2}}{4\lambda \sin^{2}\tfrac{\vartheta_{2}}{2}} \right] \frac{h}{\theta^{4}} \\
&& + \ \mathcal{O}\left( \left(  \frac{1}{\vert \sin \tfrac{\vartheta_{1}}{2} \vert} + \frac{c}{\vert \sin \tfrac{\vartheta_{2}}{2} \vert}  \right)^{4}h^{2} \right),
\end{eqnarray*}
and
\begin{eqnarray*}
\log \left[  \frac{c^{2}}{16\tfrac{\lambda^{2}}{h^{2}}\sin^{2} \tfrac{\vartheta_{1}}{2}\sin^{2} \tfrac{\vartheta_{2}}{2}} p \right] &=& \log \theta^{2} \\
&& + \ \left( \frac{1}{4\lambda \sin^{2} \tfrac{\vartheta_{1}}{2}} - \theta \imi a_{1} \frac{ \cot \tfrac{\vartheta_{1}}{2}}{2} + \frac{c^{2}}{4\lambda \sin^{2} \tfrac{\vartheta_{2}}{2}} - \theta \imi a_{2} \frac{ c \cot \tfrac{\vartheta_{2}}{2}}{2}  \right) \frac{h}{\theta} \\
&& + \ \mathcal{O}\left(  \left( \frac{1}{\vert \sin \tfrac{\vartheta_{1}}{2} \vert} + \frac{c}{\vert \sin \tfrac{\vartheta_{2}}{2} \vert}  \right)^{4}h^{2} \right).
\end{eqnarray*}
Combining both expressions yields 
\begin{eqnarray*}
\log \left( R  \right) &=& - \frac{1}{4\lambda \theta^{2}} \left( 2 \rho c \cot \tfrac{\vartheta_{1}}{2} \cot \tfrac{\vartheta_{2}}{2} + \frac{1}{\sin^{2} \tfrac{\vartheta_{1}}{2}} + \frac{c^{2}}{\sin^{2} \tfrac{\vartheta_{2}}{2}}  \right) h \\
 && + \ \mathcal{O}\left(  \left( \frac{1}{\vert \sin \tfrac{\vartheta_{1}}{2} \vert} + \frac{c}{\vert \sin \tfrac{\vartheta_{2}}{2} \vert}  \right)^{4}h^{2} \right) \\
 &=&  - \frac{1}{4\lambda \theta^{2}} \frac{c^{2}\sin^{2} \tfrac{\vartheta_{1}}{2} + 2\rho c \cos \tfrac{\vartheta_{1}}{2} \sin \tfrac{\vartheta_{1}}{2} \cos \tfrac{\vartheta_{2}}{2} \sin \tfrac{\vartheta_{2}}{2} + \sin^{2} \tfrac{\vartheta_{2}}{2} }{\sin^{2}\tfrac{\vartheta_{1}}{2} \sin^{2} \tfrac{\vartheta_{2}}{2}} h \\
 && + \ \mathcal{O}\left(  \left( \frac{1}{\vert \sin \tfrac{\vartheta_{1}}{2} \vert} + \frac{c}{\vert \sin \tfrac{\vartheta_{2}}{2} \vert}  \right)^{4}h^{2} \right).
\end{eqnarray*}
Further, recall that in this region $\vartheta_{1},\vartheta_{2}$ are both different from zero such that
\begin{equation} \iota(\vartheta_{1},\vartheta_{2}) := c^{2} \sin^{2} \tfrac{\vartheta_{1}}{2} + 2\rho c \cos \tfrac{\vartheta_{1}}{2} \sin \tfrac{\vartheta_{1}}{2} \cos \tfrac{\vartheta_{2}}{2} \sin \tfrac{\vartheta_{2}}{2} + \sin^{2} \tfrac{\vartheta_{2}}{2} > 0,
\label{eq:PositivityDenominator}
\end{equation}
and hence
\begin{eqnarray*}
\log R^{N} &=& N\log R \\
&=& - \frac{1}{4\lambda^{2} \theta^{2}} \frac{\iota(\vartheta_{1},\vartheta_{2}) }{\sin^{2}\tfrac{\vartheta_{1}}{2} \sin^{2} \tfrac{\vartheta_{2}}{2}}       \left( 1 + \mathcal{O}\left(  \left( \frac{1}{\vert \sin \tfrac{\vartheta_{1}}{2} \vert} + \frac{c}{\vert \sin \tfrac{\vartheta_{2}}{2} \vert}  \right)^{2}h \right) \right).
\end{eqnarray*}
As for the implicit Euler time stepping scheme we note
\begin{equation*}
\tfrac{c^{2}h}{2\lambda}\left( 1 - \tfrac{1}{2}z \right) = c^{2}\sin^{2}\tfrac{\vartheta_{1}}{2} + \tfrac{1}{2}\rho c \sin \vartheta_{1}  \sin \vartheta_{2} + \sin^{2}\tfrac{\vartheta_{2}}{2} + \left(  \tfrac{c^{2}}{2\lambda} - i a_{1} \tfrac{c^{2}}{4} \sin \vartheta_{1} - i a_{2} \tfrac{c}{4}  \sin \vartheta_{2}  \right)h.
\end{equation*}
Using once again an expansion analogous to (\ref{eq:TaylorExpansionLogFunction}) it follows that
$$ \log \left( 1 - \tfrac{1}{2}z \right) = \log\left( \tfrac{2\lambda}{c^{2}h} \right) + \log\left( \iota(\vartheta_{1},\vartheta_{2}) \right) + \ \mathcal{O}\left(  \left( \frac{1}{\vert \sin \tfrac{\vartheta_{1}}{2} \vert} + \frac{c}{\vert \sin \tfrac{\vartheta_{2}}{2} \vert}  \right)^{2}h \right), $$
which yields
\begin{equation*}
\log\left( R^{-N_{0}} (1-\tfrac{1}{2}z)^{-2N_{0}} \right) = -2N_{0} \log\left( \tfrac{2\lambda}{c^{2}h} \right) -2N_{0} \log\left( \iota(\vartheta_{1},\vartheta_{2}) \right) + \mathcal{O}\left(  \left( \frac{1}{\vert \sin \tfrac{\vartheta_{1}}{2} \vert} + \frac{c}{\vert \sin \tfrac{\vartheta_{2}}{2} \vert}  \right)^{2}h \right).
\end{equation*}
Making use of relationship (\ref{eq:FourierTransformNumericalSolution}) one becomes an expression for the logarithm of the Fourier transform $\widehat{U}_{N}$:
\begin{eqnarray*}
\log \widehat{U}_{N} &=& - \frac{1}{4\lambda^{2} \theta^{2}} \frac{\iota(\vartheta_{1},\vartheta_{2}) }{\sin^{2}\tfrac{\vartheta_{1}}{2} \sin^{2} \tfrac{\vartheta_{2}}{2}}       \left( 1 + \mathcal{O}\left(  \left( \frac{1}{\vert \sin \tfrac{\vartheta_{1}}{2} \vert} + \frac{c}{\vert \sin \tfrac{\vartheta_{2}}{2} \vert}  \right)^{2}h \right) \right) \\
&& - \ 2N_{0} \log\left( \tfrac{2\lambda}{c^{2}h} \right) -2N_{0} \log\left( \iota(\vartheta_{1},\vartheta_{2}) \right) +  \mathcal{O}\left(  \left( \frac{1}{\vert \sin \tfrac{\vartheta_{1}}{2} \vert} + \frac{c}{\vert \sin \tfrac{\vartheta_{2}}{2} \vert}  \right)^{2}h \right),
\end{eqnarray*}
such that in this region
\begin{equation}
\widehat{U}_{N} = \frac{(c^{2}h)^{2N_{0}}}{\left[ 2 \lambda \iota(\vartheta_{1},\vartheta_{2}) \right]^{2N_{0}}} \exp\left(  - \frac{1}{4\lambda^{2} \theta^{2}} \frac{\iota(\vartheta_{1},\vartheta_{2}) }{\sin^{2}\tfrac{\vartheta_{1}}{2} \sin^{2} \tfrac{\vartheta_{2}}{2}} \right) \left( 1 + \mathcal{O}\left(  \frac{h}{\left( \vert \vartheta_{1} \vert + \vert \vartheta_{2} \vert \right)^{2}} \right) \right).
\label{eq:HighWavenumberTransform}
\end{equation}
In Figure \ref{fig:NormTransform} we noticed that in the high-wavenumber region, i.e.\ where both $\vert \vartheta_{1} \vert, \vert \vartheta_{2} \vert$ are large, the norm $\vert \widehat{U}_{N} \vert$ is highly dependent on the MCS parameter $\theta$.
This is confirmed by (\ref{eq:HighWavenumberTransform}) since inequality (\ref{eq:PositivityDenominator}) holds in the high-wavenumber region. Hence, \textit{for larger values of the MCS parameter $\theta$ one can expect a larger high-wavenumber error}.

\subsection{Region 4: $\mathbf{\vert \boldsymbol{\kappa} \vert \geq h^{-q_{1}}, \vert c \boldsymbol{\eta} \vert \leq h^{-q_{2}}}$ with $\mathbf{q_{1} > 1/2, q_{2} \leq 1/2}$}
\label{Region4}

Reconsider the substitution $\vartheta_{1} = \kappa h$ and recall that $-\pi \leq \vartheta_{1} \leq \pi$. Then, as $\vartheta_{1}$ is non-zero in this region, one may write
\begin{eqnarray*}
\frac{1}{4\tfrac{\lambda}{h}\sin^{2} \tfrac{\vartheta_{1}}{2}} z_{1} &=& -1 + \tfrac{1}{2} \imi a_{1} h \cot \tfrac{\vartheta_{1}}{2}, \\
\frac{1}{4\tfrac{\lambda}{h}\sin^{2} \tfrac{\vartheta_{1}}{2}} z_{2} &=& -\frac{1}{c^{2}\sin^{2} \tfrac{\vartheta_{1}}{2}} \sin^{2} \tfrac{c\eta h}{2} + \imi a_{2} \frac{h}{4c \sin^{2} \tfrac{\vartheta_{1}}{2}} \sin c\eta h, \\
\frac{1}{4\tfrac{\lambda}{h}\sin^{2} \tfrac{\vartheta_{1}}{2}} z_{0} &=& - \frac{\rho}{c} \cot \tfrac{\vartheta_{1}}{2} \sin c\eta h, \\
\frac{1}{4\tfrac{\lambda}{h}\sin^{2} \tfrac{\vartheta_{1}}{2}} p &=& \left( \theta + \left( \frac{1}{4\lambda \sin^{2} \tfrac{\vartheta_{1}}{2}} - \tfrac{1}{2} \theta \imi a_{1} \cot \tfrac{\vartheta_{1}}{2}  \right)h \right) \left( 1 - \theta z_{2}  \right),
\end{eqnarray*}
such that
\begin{eqnarray*}
\lim_{h\rightarrow 0} \frac{1}{4\tfrac{\lambda}{h}\sin^{2} \tfrac{\vartheta_{1}}{2}} z_{1} &=& -1, \\
\lim_{h\rightarrow 0} \frac{1}{4\tfrac{\lambda}{h}\sin^{2} \tfrac{\vartheta_{1}}{2}} z_{2} &=& 0, \\
\lim_{h\rightarrow 0} \frac{1}{4\tfrac{\lambda}{h}\sin^{2} \tfrac{\vartheta_{1}}{2}} z_{0} &=& 0, \\
\lim_{h\rightarrow 0} \frac{1}{4\tfrac{\lambda}{h}\sin^{2} \tfrac{\vartheta_{1}}{2}} p &=& \theta ( 1 + \widetilde{\widetilde{z_{2}}}),
\end{eqnarray*}
where $\widetilde{\widetilde{z_{2}}}$ denotes a positive real number. Hence, concerning $R$ it follows that
$$ \lim_{h \rightarrow 0} \left\vert \frac{p^{2} + pz + \theta z_{0}z + (\tfrac{1}{2} - \theta)z^{2}}{p^{2}}  \right\vert  = \left\vert  \frac{(1+\widetilde{\widetilde{z_{2}}})^{2} \theta^{2} - (2+\widetilde{\widetilde{z_{2}}})\theta + \tfrac{1}{2}}{(1+\widetilde{\widetilde{z_{2}}})^{2} \theta^{2}} \right\vert. $$
For the latter expression we obtain the following positive result.
\begin{proposition}
\label{prop:Regio5}
If $\theta > 1/4$ and $\theta \neq 1/2$, then
$$  \left\vert  \frac{(1+\widetilde{\widetilde{z_{2}}})^{2} \theta^{2} - (2+\widetilde{\widetilde{z_{2}}})\theta + \tfrac{1}{2}}{(1+\widetilde{\widetilde{z_{2}}})^{2} \theta^{2}} \right\vert < 1 $$
for all real numbers $\widetilde{\widetilde{z_{2}}} \geq 0$. 
If $\theta = 1/4$ or $\theta = 1/2$, then the inequality holds for numbers $\widetilde{\widetilde{z_{2}}}>0$.
\end{proposition}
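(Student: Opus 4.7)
\vspace{2mm}
\noindent\textit{Proof proposal.}
To keep the notation light, write $w = \widetilde{\widetilde{z_{2}}}$ and denote by
$$
\Phi(w,\theta) \;=\; \frac{(1+w)^{2}\theta^{2} - (2+w)\theta + \tfrac{1}{2}}{(1+w)^{2}\theta^{2}}
$$
the rational expression of interest. My plan is to rewrite $\Phi$ as a deviation from $1$ so that the condition $|\Phi|<1$ splits into two monotone one-sided bounds, and then to analyse each bound as a quadratic in $\theta$ (or, equivalently, in $w$) whose non-negativity is essentially a square.

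First I would set
$$
\Phi(w,\theta) \;=\; 1 \;-\; G(w,\theta), \qquad G(w,\theta) \;=\; \frac{(2+w)\theta - \tfrac{1}{2}}{(1+w)^{2}\theta^{2}},
$$
so that $|\Phi|<1$ is equivalent to $0 < G(w,\theta) < 2$. The upper bound $\Phi<1$ (i.e.\ $G>0$) is straightforward: the numerator $(2+w)\theta - 1/2$ is linear and increasing in $w$, with value $2\theta - 1/2$ at $w=0$, which is strictly positive when $\theta>1/4$ and vanishes precisely at $(\theta,w) = (1/4,0)$. This already explains the exclusion of $w=0$ when $\theta=1/4$.

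The lower bound $\Phi>-1$ (i.e.\ $G<2$) is the main point. Clearing denominators reduces it to showing positivity of
$$
H(w,\theta) \;=\; 2(1+w)^{2}\theta^{2} - (2+w)\theta + \tfrac{1}{2}.
$$
The key observation I would exploit is that
$$
H(0,\theta) \;=\; 2\theta^{2} - 2\theta + \tfrac{1}{2} \;=\; 2\bigl(\theta-\tfrac{1}{2}\bigr)^{2} \;\geq\; 0,
$$
with equality only at $\theta=1/2$; this isolates the second excluded boundary case. For $w\geq 0$ monotonicity then finishes the job: since
$$
\partial_{w} H(w,\theta) \;=\; \theta\bigl(4(1+w)\theta - 1\bigr) \;\geq\; \theta(4\theta - 1) \;>\; 0
$$
whenever $\theta>1/4$, the function $w \mapsto H(w,\theta)$ is strictly increasing on $[0,\infty)$. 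Combining this with $H(0,\theta)\geq 0$ gives $H(w,\theta) > 0$ for all $w>0$ under the hypotheses $\theta\geq 1/4$, $\theta=1/2$ included, and $H(w,\theta)>0$ for all $w\geq 0$ as soon as $\theta>1/4$ and $\theta\neq 1/2$.

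I do not anticipate any genuine obstacle. The only step requiring a small amount of care is the book-keeping of the two boundary cases $\theta=1/4$ and $\theta=1/2$: one arises from the numerator of $G$ vanishing at $w=0$, the other from $H(0,\theta)=2(\theta-1/2)^{2}$ vanishing at $w=0$, and both force the strict inequality to fail precisely at $w=0$, which is why the second assertion of the proposition requires $w>0$.
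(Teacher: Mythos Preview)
Your argument is correct and follows the same reduction as the paper: both rewrite $|\Phi|<1$ as the pair of inequalities $(2+w)\theta-\tfrac{1}{2}>0$ and $H(w,\theta):=2(1+w)^{2}\theta^{2}-(2+w)\theta+\tfrac{1}{2}>0$, and both handle the first inequality identically. The only genuine difference is in the treatment of $H>0$. The paper views $H$ as a quadratic in~$\theta$ and computes its discriminant $\Delta=(2+w)^{2}-4(1+w)^{2}=-w(3w+4)$, which is negative for $w>0$ (forcing $H>0$ for all $\theta$) and zero for $w=0$ (where $H$ collapses to $2(\theta-\tfrac{1}{2})^{2}$). You instead fix $\theta$ and argue via monotonicity in $w$: from $H(0,\theta)=2(\theta-\tfrac{1}{2})^{2}\geq 0$ and $\partial_{w}H=\theta\bigl(4(1+w)\theta-1\bigr)\geq 0$ for $\theta\geq\tfrac{1}{4}$ you deduce $H(w,\theta)>0$ for $w>0$. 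Both routes are equally short; the discriminant approach has the mild advantage that it shows $H>0$ for \emph{all} real $\theta$ once $w>0$, while your monotonicity argument is tied to $\theta\geq\tfrac{1}{4}$ but makes the role of the boundary value $H(0,\theta)=2(\theta-\tfrac{1}{2})^{2}$ more transparent. One small remark: at $\theta=\tfrac{1}{4}$ your derivative bound $\partial_{w}H\geq\theta(4\theta-1)$ gives only $\partial_{w}H\geq 0$, not strict positivity, so strictly speaking you should note that $H(0,\tfrac{1}{4})=\tfrac{1}{8}>0$ together with non-decreasing $H$ already yields $H>0$ for all $w\geq 0$ in that case.
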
 
\begin{proof}
Let $\widetilde{\widetilde{z_{2}}}$ be a positive real number.
First, it is clear that the inequality holds whenever both
\begin{subeqnarray}
&& -(2+\widetilde{\widetilde{z_{2}}})\theta + \tfrac{1}{2} < 0, \slabel{eq:Proposition2ineq1} \\
&&  2( 1 + \widetilde{\widetilde{z_{2}}} )^{2} \theta^{2} - (2 + \widetilde{\widetilde{z_{2}}})\theta + \tfrac{1}{2} > 0. \slabel{eq:Proposition2ineq2}
\end{subeqnarray}
It is readily seen that (\ref{eq:Proposition2ineq1}) is satisfied for $\theta > 1/4$. For strictly positive $\widetilde{\widetilde{z_{2}}}$ the inequality is satisfied whenever $\theta \geq 1/4$. 
Regarding inequality (\ref{eq:Proposition2ineq2}) we consider the left-hand side as a second-order polynomial in $\theta$ with discriminant
$$ \Delta = (2+ \widetilde{\widetilde{z_{2}}})^{2} - 4 (1 + \widetilde{\widetilde{z_{2}}})^{2} = -\widetilde{\widetilde{z_{2}}}(3\widetilde{\widetilde{z_{2}}}+4). $$
If $\widetilde{\widetilde{z_{2}}} >0$, then $\Delta < 0$ and the polynomial is strictly positive for all real numbers $\theta$. 
If $\widetilde{\widetilde{z_{2}}} = 0$, the polynomial reduces to $2\theta^{2} - 2\theta + 1/2 $ which reaches its minimum (zero) in $\theta = 1/2$. 
\begin{flushright}
\mbox{\tiny $\blacksquare$}
\end{flushright}
\end{proof}
Let $\theta > 1/4$ and $\theta \neq 1/2$. 
Then, applying Proposition \ref{prop:Regio5} in this region yields
$$ \lim_{h \rightarrow 0} \vert R \vert = \lim_{h \rightarrow 0} \left\vert \frac{p^{2} + pz + \theta z_{0}z + (\tfrac{1}{2} - \theta)z^{2}}{p^{2}}  \right\vert  < 1,$$
and thus
$$ \vert R^{N} \vert = \vert R \vert^{1/(\lambda h)} = \mathcal{O}\left( h^{w} \right) \quad \forall w>0. $$
Since $N_{0}$ is independent from $h$ and $\vert 1/(1-\tfrac{1}{2}z) \vert \leq 1$ one may conclude that 
$$ \vert \widehat{U}_{N} \vert = \mathcal{O}(h^{w}) \quad \forall w >0. $$
Next, consider the case $\theta = 1/2$. Recall that the MCS scheme then reduces to the original CS scheme.
If $\vert c\eta \vert = h^{-1/2}$, it follows that 
$$\lim_{h\rightarrow 0} \frac{1}{4\tfrac{\lambda}{h}\sin^{2} \tfrac{\vartheta_{1}}{2}} p = \theta ( 1 + \widetilde{\widetilde{z_{2}}}),$$
with $\widetilde{\widetilde{z_{2}}} >0 $ such that proposition \ref{prop:Regio5} can be applied and $ \vert R^{N} \vert = \mathcal{O}\left( h^{w} \right)$ for all $w>0 $. 
Now, assume $\vert c \eta \vert \leq h^{-q_{2}}$ with $q_{2} < 1/2$.
An expansion similar to (\ref{eq:TaylorExpansionLogFunction}) yields
\begin{eqnarray*}
&& \log \left[ \frac{-1}{16 \tfrac{\lambda^2}{h^2} \sin^{4} \tfrac{\vartheta_{1}}{2}} (p^{2} + pz + \theta z_{0} z + (\tfrac{1}{2} - \theta)z^{2}) \right] \\
&=& \log( -\theta^2 + 2\theta - \tfrac{1}{2} ) \\
&& - \ \left[ 2\theta \left( \frac{1}{4\lambda \sin^{2}\tfrac{\vartheta_{1}}{2}} - \tfrac{1}{2} \theta \imi a_{1} \cot \tfrac{\vartheta_{1}}{2} - \theta^{2} z_{2}^{[1]} \right) \right. +   \theta \left( -\rho \cot \tfrac{\vartheta_{1}}{2} \eta  + \tfrac{1}{2} \imi a_{1} \cot \tfrac{\vartheta_{1}}{2} \right) \\
&& - \ \left( \frac{1}{4\lambda \sin^{2} \tfrac{\vartheta_{1}}{2} } - \tfrac{1}{2} \theta \imi a_{1} \cot \tfrac{\vartheta_{1}}{2} - \theta^{2} z_{2}^{[1]}  \right) +  \theta \rho \cot \tfrac{\vartheta_{1}}{2} \eta \\
&&  - \ 2(\tfrac{1}{2} - \theta) \left( - \rho \cot \tfrac{\vartheta_{1}}{2} \eta + \tfrac{1}{2} \imi a_{1} \cot \tfrac{\vartheta_{1}}{2} \right) \Bigg] \frac{h}{-\theta^{2} + 2\theta - \tfrac{1}{2}} \\
&& + \ \mathcal{O}\left( \left( \frac{1}{\sin^{2} \tfrac{\vartheta_{1}}{2}} + \frac{\vert \eta \vert}{\vert \sin \tfrac{\vartheta_{1}}{2} \vert} + \eta^{2} \right)^{2} h^{2}  \right),
\end{eqnarray*}
and
\begin{eqnarray*}
\log \left[ \frac{1}{16 \tfrac{\lambda^2}{h^2} \sin^{4} \tfrac{\vartheta_{1}}{2}} p^{2} \right] &=& \log \theta^{2} \\
&& + \ 2\theta \left(  \frac{1}{4\lambda \sin^{2}\tfrac{\vartheta_{1}}{2}} - \tfrac{1}{2} \theta \imi a_{1} \cot \tfrac{\vartheta_{1}}{2} - \theta^{2} z_{2}^{[1]} \right) \frac{h}{\theta^{2}} \\
&& + \ \mathcal{O}\left( \left( \frac{1}{\sin^{2} \tfrac{\vartheta_{1}}{2}} + \eta^{2} \right)^{2} h^{2}  \right).
\end{eqnarray*}
Making use of $\theta = 1/2$ it follows that
\begin{eqnarray*}
\log ( -R ) &=& \left( \frac{-1}{\lambda \sin^{2}\tfrac{\vartheta_{1}}{2}} + z_{2}^{[1]} \right) h + \mathcal{O}\left( \left( \frac{1}{\sin^{2} \tfrac{\vartheta_{1}}{2}} + \frac{\eta}{\vert \sin \tfrac{\vartheta_{1}}{2} \vert} + \eta^{2} \right)^{2} h^{2}  \right) \\
&=& \left( \frac{-1}{\lambda \sin^{2}\tfrac{\vartheta_{1}}{2}} - \lambda \eta^{2} + \imi a_{2} \lambda \eta \right) h + \mathcal{O}\left( \left( \frac{1}{\sin^{2} \tfrac{\vartheta_{1}}{2}} + \frac{\vert \eta \vert}{\vert \sin \tfrac{\vartheta_{1}}{2} \vert} + \eta^{2} \right)^{2} h^{2}  \right), \\
\end{eqnarray*}
and hence
\begin{equation*}
\log \left( (-R)^{N} \right) = \left( \frac{-1}{\lambda^{2} \sin^{2}\tfrac{\vartheta_{1}}{2}} - \eta^{2} + \imi a_{2} \eta \right)\left( 1 + \mathcal{O}\left( \left( \frac{1}{\sin^{2} \tfrac{\vartheta_{1}}{2}} + \frac{\vert \eta \vert}{\vert \sin \tfrac{\vartheta_{1}}{2} \vert} + \eta^{2} \right) h  \right) \right).
\end{equation*}
In order to analyse the Rannacher time stepping we note
$$ \frac{1 - \tfrac{1}{2}z}{2\tfrac{\lambda}{h}\sin^{2} \tfrac{\vartheta_{1}}{2}} = 1 + \mathcal{O}\left( \left( \frac{1}{\sin^{2} \tfrac{\vartheta_{1}}{2}} + \frac{\vert \eta \vert}{\vert \sin \tfrac{\vartheta_{1}}{2} \vert} + \eta^{2} \right) h  \right), $$
which yields
\begin{equation*}
\log \left( (-R)^{-N_{0}} \left( 1-\tfrac{1}{2}z \right)^{-2N_{0}} \right) = 2N_{0} \log \left( \frac{h}{2\lambda \sin^{2} \tfrac{\vartheta_{1}}{2}} \right) + \mathcal{O}\left( \left( \frac{1}{\sin^{2} \tfrac{\vartheta_{1}}{2}} + \frac{\vert \eta \vert}{\vert \sin \tfrac{\vartheta_{1}}{2} \vert} + \eta^{2} \right) h \right).
\end{equation*}
By exploring relationship (\ref{eq:FourierTransformNumericalSolution}) one may conclude that
\begin{equation}
\widehat{U}_{N} = (-1)^{N-N_{0}} \frac{h^{2N_{0}}}{(2\lambda \sin^{2}\tfrac{\vartheta_{1}}{2})^{2N_{0}}} \exp\left( \frac{-1}{\lambda^{2} \sin^{2}\tfrac{\vartheta_{1}}{2}} - \eta^{2} + \imi a_{2} \eta \right)\left( 1 + \mathcal{O}\left( \left( \tfrac{1}{\vartheta_{1}^{2}} + \tfrac{\vert \eta \vert}{\vert \vartheta_{1} \vert} + \eta^{2} \right) h  \right) \right).
\label{eq:CSWavenumberError}
\end{equation}
Whenever $\vert c\eta \vert = h^{-1/2}$, the right-hand side of (\ref{eq:CSWavenumberError}) is $\mathcal{O}\left( h^{w} \right)$ for all $w>0$ such that we can use expression (\ref{eq:CSWavenumberError}) for the whole region in the case of $\theta = 1/2$.

\subsection{Region 5: $\mathbf{\vert \boldsymbol{\kappa} \vert \leq h^{-q_{1}}, \vert c \boldsymbol{\eta} \vert \geq h^{-q_{2}}}$ with $\mathbf{q_{1} \leq 1/2, q_{2} > 1/2}$}

The analysis for this region is completely analogous to the analysis in Subsection \ref{Region4}. Hence, for $\theta > 1/4$ and $ \theta \neq 1/2$ it follows that
$$ \vert \widehat{U}_{N} \vert = \mathcal{O}(h^{w}) \quad \forall w >0. $$
Whenever the CS scheme is considered, i.e.\ $\theta = 1/2$, one gets the expression
\begin{equation}
\widehat{U}_{N} = (-1)^{N-N_{0}} \frac{(ch)^{2N_{0}}}{(2\lambda \sin^{2}\tfrac{\vartheta_{2}}{2})^{2N_{0}}} \exp\left( \frac{-c^{2}}{\lambda^{2} \sin^{2}\tfrac{\vartheta_{2}}{2}} - \kappa^{2} + \imi a_{2} \kappa \right)\left( 1 + \mathcal{O}\left( \left( \kappa^{2} + \tfrac{\vert \kappa \vert}{\vert \vartheta_{2} \vert}  + \tfrac{1}{\vartheta_{2}^{2}}  \right) h  \right) \right).
\label{eq:CSWavenumberError2}
\end{equation}

\subsection{Connection with stability of the MCS scheme}
\label{subsec:Stability}

In the above analysis natural bounds on the MCS parameter $\theta$ arise under which the asymptotic results are valid. 
These bounds can be interpreted as stability bounds.
In particular, the conditions $\theta \geq \tfrac{1}{4}, \ \theta > \tfrac{ 1 + \vert \rho \vert}{6}$ are needed to ensure that the Fourier transform $\widehat{U}_{N}$ is negligible in the second region. This restriction is only slightly stronger than the lower bound on $\theta$ derived in \cite{IHM10}, guaranteeing unconditional stability of the MCS scheme in the von Neumann sense pertinent to two-dimensional diffusion equations with mixed derivative term.
This is, indeed, not very surprising. In \cite{IHM10} it is stated that the stability analysis of the MCS scheme in this case reduces to bounding by one of the modulus of the scalar expression
$$ 1 + \frac{\widetilde{z}}{\widetilde{p}} + \frac{(\theta \widetilde{z}_{0} + (\tfrac{1}{2} - \theta) \widetilde{z}) \widetilde{z}}{\widetilde{p}^{2}} ,$$
where $\widetilde{z}=\widetilde{z}_{0} + \widetilde{z_{1}} + \widetilde{z_{2}}, \ \widetilde{p} = (1-\theta \widetilde{z_{1}})(1-\theta \widetilde{z_{2}})$ and $\widetilde{z_{0}},\widetilde{z_{1}},\widetilde{z_{2}}$ denote real numbers satisfying the condition (\ref{eq:ScalarRestrictions}).
This explains why Proposition \ref{prop:Stabiliteit} is just a slight modification of one of the statements in \cite[Theorem 1]{IHM10}.

\setcounter{equation}{0}
\section{Asymptotic analysis in physical space}\label{results2}

In this section we will use the asymptotic results in Fourier space from Section \ref{results} to perform an error analysis in physical space. First note that the Fourier transform $\widehat{u}$ is only sizeable in region 1 of the Fourier domain. 
In the other regions it holds that $\kappa \geq h^{-1/3}$ or $c\eta \geq h^{-1/3}$ and hence 
$$ \widehat{u}(\kappa,\eta,1) = \mathcal{O}\left( h^{w} \right) \quad \forall w>0. $$
Based on equalities \eqref{eq:FourierTransformExact}, (\ref{eq:LowWavenumberTransform}) and (\ref{eq:HighWavenumberTransform}) we define
$$ \widehat{E}^{low} = h^{2} \exp(-\kappa^{2} - 2\rho \kappa \eta - \eta^{2} + i a_{1} \kappa + i a_{2} \eta ) (s^{[2]}(\kappa,\eta) + N_{0}^{[2]}(\kappa,\eta)) $$
and
$$ \widehat{E}^{high} = \frac{(c^{2}h)^{2N_{0}}}{\left[ 2 \lambda  \iota(\vartheta_{1},\vartheta_{2})  \right]^{2N_{0}}} \exp\left(  - \frac{1}{4\lambda^{2} \theta^{2}} \frac{\iota(\vartheta_{1},\vartheta_{2}) }{\sin^{2}\tfrac{\vartheta_{1}}{2} \sin^{2} \tfrac{\vartheta_{2}}{2}} \right).$$
Recall that $\vartheta_{1} = \kappa h_{1},$ $\vartheta_{2} = \eta h_{2}$ and $h_{2} = ch_{1} = ch$.
As a consequence, $\widehat{E}^{low}$ is only sizeable in region 1 and $\widehat{E}^{high}$ is only sizeable in region 3.
In the other regions of the Fourier domain $\widehat{U}_{N}$ is negligible whenever $\theta > \max\{\tfrac{1}{4},\tfrac{1+\vert \rho \vert}{6}\}$ and $ \theta \neq 1/2$.
Hence, for these values of $\theta$, the results can be combined to
\begin{equation}
 \widehat{U}_{N}(\kappa h_{1},\eta h_{2}) - \widehat{u}(\kappa,\eta,1) \approx \widehat{E}^{low} + \widehat{E}^{high}, \qquad \vert \kappa \vert, \vert c\eta \vert \leq \pi/h.  
\label{eq:FourierError}
\end{equation}
When $\theta = 1/2$, i.e.\ when the MCS scheme reduces to the CS scheme, $\widehat{U}_{N}$ is also sizeable in region 4 and region 5 of the Fourier domain. This case will be treated separately.

\subsection{MCS scheme with $\theta \neq 1/2$}
\label{ConvergenceMCS}

Consider the case where the MCS scheme is different from the CS scheme, i.e.\ $\theta \neq 1/2$, and suppose that the restriction $\theta > \max\{\tfrac{1}{4},\tfrac{1+\vert \rho \vert}{6}\}$ is satisfied. 
Approximation (\ref{eq:FourierError}) is then valid and based on \eqref{eq:ApproxTotalError} we have for the total error:
$$ U_{N,j,k} - u(x_{j},y_{k},1) \approx E^{low}_{j,k} + E^{high}_{j,k},  $$
where
\begin{equation}
E^{low}_{j,k} = \frac{h^{2}}{4\pi^{2}} \int_{-\infty}^{\infty}\int_{-\infty}^{\infty} \widehat{u}(\kappa,\eta,1) (s^{[2]}(\kappa,\eta) + N_{0}^{[2]}(\kappa,\eta)) \exp(\imi \kappa x_{j}) \exp( \imi \eta y_{k}) d\kappa d\eta 
\label{eq:LowWavenumberError}
\end{equation} 
and
\begin{eqnarray*}
E^{high}_{j,k} &=& \frac{h^{2N_{0}}c^{4N_{0}}}{4\pi^{2}} \int_{-\pi/h_{2}}^{\pi/h_{2}} \int_{-\pi/h_{1}}^{\pi/h_{1}} \frac{\exp(\imi \kappa x_{j})\exp(\imi \eta y_{k})}{\left[ 2 \lambda \iota(\vartheta_{1},\vartheta_{2}) \right]^{2N_{0}}} \exp\left(  - \frac{1}{4\lambda^{2} \theta^{2}} \frac{\iota(\vartheta_{1},\vartheta_{2}) }{\sin^{2}\tfrac{\vartheta_{1}}{2} \sin^{2} \tfrac{\vartheta_{2}}{2}} \right) d\kappa d\eta \\
		&=& \frac{h^{2N_{0}-2}c^{4N_{0}-1}}{4\pi^{2}} \int_{-\pi}^{\pi}\int_{-\pi}^{\pi} \frac{\exp(\imi j \vartheta_{1})\exp(\imi k \vartheta_{2})}{\left[ 2 \lambda \iota(\vartheta_{1},\vartheta_{2}) \right]^{2N_{0}}} \exp\left(  - \frac{1}{4\lambda^{2} \theta^{2}} \frac{\iota(\vartheta_{1},\vartheta_{2}) }{\sin^{2}\tfrac{\vartheta_{1}}{2} \sin^{2} \tfrac{\vartheta_{2}}{2}} \right) d\vartheta_{1} d\vartheta_{2}.
\end{eqnarray*}
First, consider the low-wavenumber error. 
The inverse mixed discrete/continuous Fourier transform of $\widehat{E}^{low}$ is given by 
\begin{equation*}
\frac{h^{2}}{4\pi^{2}} \int_{-\pi /h_{2}}^{\pi /h_{2}}\int_{-\pi /h_{1}}^{\pi /h_{1}} \widehat{u}(\kappa,\eta,1) (s^{[2]}(\kappa,\eta) + N_{0}^{[2]}(\kappa,\eta)) \exp(\imi \kappa x_{j}) \exp( \imi \eta y_{k}) d\kappa d\eta, 
\end{equation*} 
which can be approximated by \eqref{eq:LowWavenumberError} as $h_{1},h_{2}$ tend to zero.
Let 
$$ \phi_{\rho}(x,y) = \tfrac{1}{\sqrt{4\pi^{2}(1-\rho^{2})}} \exp\left( - \tfrac{x^{2} - 2\rho x y + y^{2}}{2(1-\rho^{2})} \right), $$
the density function of a two-dimensional standard-normally distributed random variable with correlation $\rho$.
Its Fourier transform is
$$ \widehat{\phi}_{\rho}(\kappa,\eta) = \exp\left( -\tfrac{\kappa^{2}}{2} - \rho \kappa \eta - \tfrac{\eta^{2}}{2}\right). $$
Hence, for all positive integers $n_{1}, n_{2}$ the Fourier transform of $ \frac{\partial^{n_{1}+n_{2}}}{\partial x^{n_{1}}\partial y^{n_{2}}} \phi_{\rho}\left( \frac{x+a_{1}}{\sqrt{2}},\frac{y+a_{2}}{\sqrt{2}} \right) $ is
$$ 2(\imi \sqrt{2} \kappa)^{n_{1}} (\imi \sqrt{2} \eta)^{n_{2}} \exp(-\kappa^{2} - 2\rho \kappa \eta - \eta^{2} + \imi a_{1} \kappa + \imi a_{2} \eta) , $$
such that the inverse Fourier transform of
$$ h^{2} \exp( -\kappa^{2} - 2\rho \kappa \eta - \eta^{2} + \imi a_{1} \kappa + \imi a_{2} \eta )\kappa^{n_{1}}\eta^{n_{2}} $$
is given by
$$ \frac{h^{2}}{2}\frac{1}{(\imi\sqrt{2})^{n_{1}+n_{2}}} \frac{\partial^{n_{1}+n_{2}}}{\partial x^{n_{1}} \partial y^{n^{2}}} \phi_{\rho}\left( \frac{x+a_{1}}{\sqrt{2}},\frac{y+a_{2}}{\sqrt{2}} \right).  $$
Recalling the formulas for $s^{[2]}$ and $R^{[2]}$ from Subsection \ref{Region1}, this leads to the following expression for the low-wavenumber error:
\begin{equation}
E^{low}_{j,k} = h^{2} C_{x_{j},y_{k}}^{low},
\label{eq:LowWavenumberErrorDetailed}
\end{equation}
with
\begin{eqnarray*}
C_{x_{j},y_{k}}^{low} &=& \frac{1}{2} \Bigg[ \frac{1}{48}\frac{\partial^{4}}{\partial x^{4}} + \frac{\rho}{12}\left( \frac{\partial^{4}}{\partial x^{3} \partial y} + \frac{c^{2}\partial^{4}}{\partial x \partial y^{3}} \right) + \frac{c^{2}}{48}\frac{\partial^{4}}{\partial y^{4}}  + \frac{a_{1}}{12\sqrt{2}} \frac{\partial^{3}}{\partial x^{3}} + \frac{a_{2}c^{2}}{12\sqrt{2}} \frac{\partial^{3}}{\partial y^{3}}   \\
&& - \ \lambda^{2}\theta^{2} \left( \frac{1}{2} \frac{\partial^{2}}{\partial x^{2}} + \frac{a_{1}}{\sqrt{2}} \frac{\partial}{\partial x} \right) \left( \frac{1}{2} \frac{\partial^{2}}{\partial y^{2}} + \frac{a_{2}}{\sqrt{2}}\frac{\partial}{\partial y} \right) \left(  \frac{1}{2} \frac{\partial^{2}}{\partial x^{2}} + \rho \frac{\partial^{2}}{\partial x \partial y} + \frac{1}{2} \frac{\partial^{2}}{\partial y^{2}} + \frac{a_{1}}{\sqrt{2}} \frac{\partial}{\partial x} + \frac{a_{2}}{\sqrt{2}}\frac{\partial}{\partial y}  \right) \\
&& + \ \frac{\lambda^{2}}{12} \left(  \frac{1}{2} \frac{\partial^{2}}{\partial x^{2}} + \rho \frac{\partial^{2}}{\partial x \partial y} + \frac{1}{2} \frac{\partial^{2}}{\partial y^{2}} + \frac{a_{1}}{\sqrt{2}} \frac{\partial}{\partial x} + \frac{a_{2}}{\sqrt{2}}\frac{\partial}{\partial y}  \right)^{3} \\
&& - \ \lambda^{2} \left(  \frac{1}{2} \frac{\partial^{2}}{\partial x^{2}} + \rho \frac{\partial^{2}}{\partial x \partial y} + \frac{1}{2} \frac{\partial^{2}}{\partial y^{2}} + \frac{a_{1}}{\sqrt{2}} \frac{\partial}{\partial x} + \frac{a_{2}}{\sqrt{2}}\frac{\partial}{\partial y}  \right)\times \\
&& \qquad \qquad \left(  \frac{\rho}{2} \frac{\partial^{2}}{\partial x \partial y} + \left(\frac{1}{2} - \theta \right) \left( \frac{1}{2} \frac{\partial^{2}}{\partial x^{2}} + \frac{1}{2} \frac{\partial^{2}}{\partial y^{2}} + \frac{a_{1}}{\sqrt{2}} \frac{\partial}{\partial x} + \frac{a_{2}}{\sqrt{2}}\frac{\partial}{\partial y} \right) \right)^{2} \\
&& + \ \frac{N_{0}\lambda^{2}}{4} \left(  \frac{1}{2} \frac{\partial^{2}}{\partial x^{2}} + \rho \frac{\partial^{2}}{\partial x \partial y} + \frac{1}{2} \frac{\partial^{2}}{\partial y^{2}} + \frac{a_{1}}{\sqrt{2}} \frac{\partial}{\partial x} + \frac{a_{2}}{\sqrt{2}}\frac{\partial}{\partial y}  \right)^{2}  \Bigg] \phi_{\rho}\left( \frac{x_{j}+a_{1}}{\sqrt{2}},\frac{y_{k}+a_{2}}{\sqrt{2}} \right).
\end{eqnarray*}
Next, consider the high-wavenumber error and note that
$$\exp(\imi j \vartheta_{1})\exp(\imi k \vartheta_{2}) = \cos( j\vartheta_{1} + k \vartheta_{2}) + \imi \sin( j\vartheta_{1} + k \vartheta_{2}).$$
Symmetry yields
\begin{equation}
\label{eq:HighWavenumberError}
E^{high}_{j,k} = h^{2N_{0}-2} C^{high}_{j,k},
\end{equation}
where
\begin{eqnarray*}
C^{high}_{j,k} &=& \frac{c^{4N_{0}-1}}{2\pi^{2}} \int_{0}^{\pi} \int_{0}^{\pi} \frac{\cos( j\vartheta_{1} + k \vartheta_{2})}{\left[ 2 \lambda \iota(\vartheta_{1},\vartheta_{2}) \right]^{2N_{0}}} \exp\left(  - \frac{1}{4\lambda^{2} \theta^{2}} \frac{\iota(\vartheta_{1},\vartheta_{2}) }{\sin^{2}\tfrac{\vartheta_{1}}{2} \sin^{2} \tfrac{\vartheta_{2}}{2}} \right) d\vartheta_{1} d\vartheta_{2} \\
		&& + \ \frac{c^{4N_{0}-1}}{2\pi^{2}} \int_{-\pi}^{0} \int_{0}^{\pi} \frac{\cos( j\vartheta_{1} + k \vartheta_{2})}{\left[ 2 \lambda \iota(\vartheta_{1},\vartheta_{2}) \right]^{2N_{0}}} \exp\left(  - \frac{1}{4\lambda^{2} \theta^{2}} \frac{\iota(\vartheta_{1},\vartheta_{2}) }{\sin^{2}\tfrac{\vartheta_{1}}{2} \sin^{2} \tfrac{\vartheta_{2}}{2}} \right) d\vartheta_{1} d\vartheta_{2}. 
\end{eqnarray*}
Combining both expressions \eqref{eq:LowWavenumberErrorDetailed} and \eqref{eq:HighWavenumberError} gives an approximation for the \textit{total error}:
\begin{equation} 
 U_{N,j,k} - u(x_{j},y_{k},1) \approx h^{2} C^{low}_{x_{j},y_{k}} + h^{2N_{0}-2} C^{high}_{j,k}. 
\label{eq:TotalErrorApprox} 
\end{equation}
The values $C^{low}_{x_{j},y_{k}}$ are only dependent on the position $(x_{j},y_{k})=(jh_{1},kh_{2})$, the parameter values of the problem and the ratios $c$ and $\lambda$. 
The constants $C^{high}_{j,k}$ only depend on the index $(j,k)$, the correlation parameter $\rho$ and the ratios $c, \lambda$.
For the numerical experiments, cf.\ infra, the values $C^{low}_{x_{j},y_{k}}$ are calculated by determining all the partial derivatives. The integrals in $C^{high}_{j,k}$ are approximated by numerical integration.
It is readily seen that
\begin{equation*}
\max_{j,k} \vert C_{j,k}^{high} \vert = \vert C_{0,0}^{high} \vert ,
\end{equation*}
so $E_{j,k}^{high}$ has a maximum magnitude where $(x_{j},y_{k}) = (0,0)$. \textit{This is exactly at the position of the discontinuity of the initial function.} At the end of Subsection \ref{subsec:HighWaveTransf} it was conjectured that for larger values of the MCS parameter $\theta$ one can expect a larger high-wavenumber error. This conjecture is confirmed by the above analysis given that $\iota(\vartheta_{1},\vartheta_{2})$ is always positive. In order to avoid spurious erratic behaviour in the numerical solution, \textit{it is therefore recommended to use smaller values of the parameter $\theta$.} However, one has to take into account the lower bound on $\theta$ described in Subsection \ref{subsec:Stability}.

We showed in \eqref{eq:TotalErrorApprox} that the total error is $\mathcal{O}(h^{\min\{2,2N_{0}-2\}})$ so that $N_{0}=2$ is a lower bound on $N_{0}$ for the Rannacher time stepping in order to ensure convergence of the numerical solution to the exact solution.
This is confirmed by the plots in Figure \ref{fig:Convergence13} which display total errors (in the maximum norm) in actual numerical experiments for model problem (\ref{eq:ModelPDE}) as a function of $1/h$, with parameter values $\rho = -0.7, a_{1} = 2, a_{2} = 3$, MCS parameter $\theta = 1/3$ and with $c=1$, $0.2 \leq \lambda \leq 0.8$. 
Since it is not possible to handle infinite domains in numerical experiments, the computational domain is restricted to spatial gridpoints $(x_{j},y_{k}) \in [-10, 10] \times [-10, 10]$. At the boundaries, homogeneous Dirichlet boundary conditions are applied.
In the left plots the case $N_{0}=0$ is considered, whereas the right plots show the corresponding results for $N_{0}=2$.
In the upper plots the maximum error between our numerical solution and the exact solution is shown as a function of $1/h$ for different values of $\lambda$. In the lower plots we show the same maximum error for one value of $\lambda$, together with our theoretical estimates for the corresponding low-wavenumber error and high-wavenumber error.
In these lower plots it is clearly seen that our theoretical estimates for the total error are sharp.

For the case where no Rannacher time stepping is applied, the left plots in Figure \ref{fig:Convergence13} reveal second-order convergence behaviour until $h$ reaches a critical value where the high-wavenumber error starts exceeding the low-wavenumber error.
It can be observed that this value of $h$, and thus the high-wavenumber error, is highly dependent on the ratio $\lambda = \Delta t / h$.
For smaller values of $\lambda$, $E_{j,k}^{high}$ is only sizeable whenever $h$ is very small, whereas for larger values of $\lambda$, $E_{j,k}^{high}$ already dominates the total error for larger values of $h$.
Moreover, the error constant for the low-wavenumber error is also dependent on $\lambda$. However, this is much less pronounced than for the high-wavenumber error.

The right plots in Figure \ref{fig:Convergence13} show the corresponding results in the case where the first two MCS timesteps are replaced by four backward Euler half-timesteps, thus $N_{0}=2$. One observes that the numerical approximations now exhibit second-order convergence for all values of $\lambda$. In the bottom right plot the high-wavenumber error is not visible since it is strongly dominated by the low-wavenumber error. The same observation is made for other values of $\lambda$. Hence, whenever Rannacher time stepping is applied with $N_{0}=2$, the total error can be approximated by $E_{j,k}^{low}$, which is of second-order in $h$. We find that the error constant for the low-wavenumber error is mildly dependent on the ratio $\lambda = \Delta t / h$. This can be explained through the fact that for a fixed value of $h$ but smaller value of $\lambda$ the same semidiscrete system is solved with a smaller timestep $\Delta t$. Finally, we notice that the latter error constant is slightly larger than for the case where $N_{0}=0$. Thus, by applying Rannacher time stepping with $N_{0}=2$, second-order convergence can be recovered at the small cost of a marginally larger error constant for the low-wavenumber error.

As stated above, the high-wavenumber error is very sensitive to the MCS parameter $\theta$. To illustrate this, Figure \ref{fig:Convergence1} shows the same plots as in Figure \ref{fig:Convergence13} but with the MCS parameter replaced by $\theta=1$.
It can be seen that all the conclusions from Figure \ref{fig:Convergence13} remain valid. In order to get decent plots, however, it is necessary to consider smaller values for $\lambda$. This confirms that, for fixed $\lambda$, $E_{j,k}^{high}$ is strongly increasing as a function of $\theta$. Moreover, by comparing the upper plots of Figure \ref{fig:Convergence13} and Figure \ref{fig:Convergence1} for $\lambda=0.2$ it can be seen that the error constant of the low-wavenumber error is substantially larger for MCS parameter $\theta=1$ than for $\theta=1/3$. We conjecture that for fixed $\lambda$ and fixed $h$, the low-wavenumber error is also increasing as a function of $\theta$. 
Therefore, regardless of the number of Rannacher timesteps $N_{0}$, it seems more favourable to consider smaller values of $\theta$. In particular, the lowest value of $\theta$ which satisfies the restrictions from Subsection \ref{subsec:Stability} for all values $\vert \rho \vert <1$ is given by $\theta = 1/3$.

\begin{figure}
\begin{center}
\includegraphics[scale=0.5]{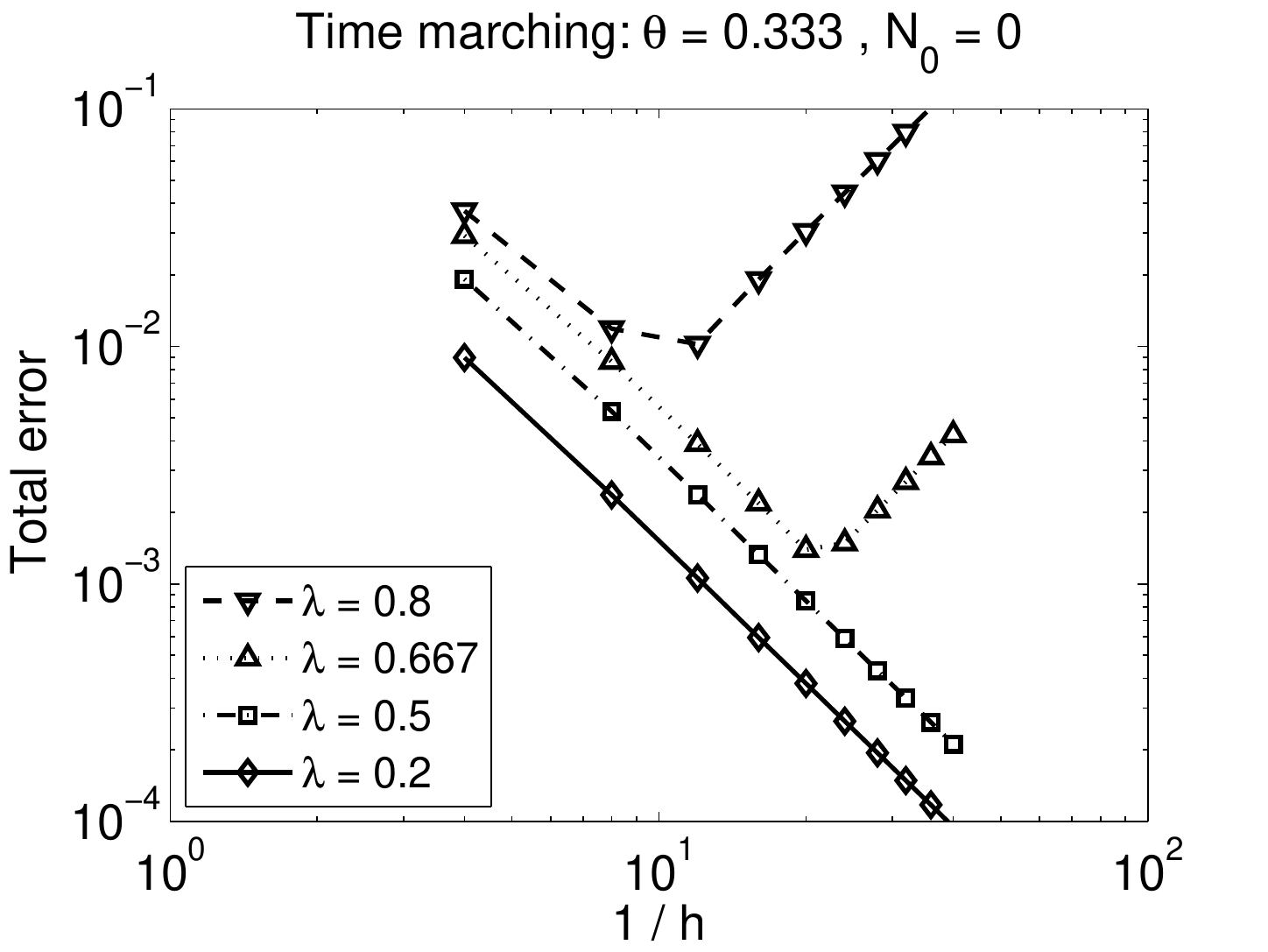} 
\includegraphics[scale=0.5]{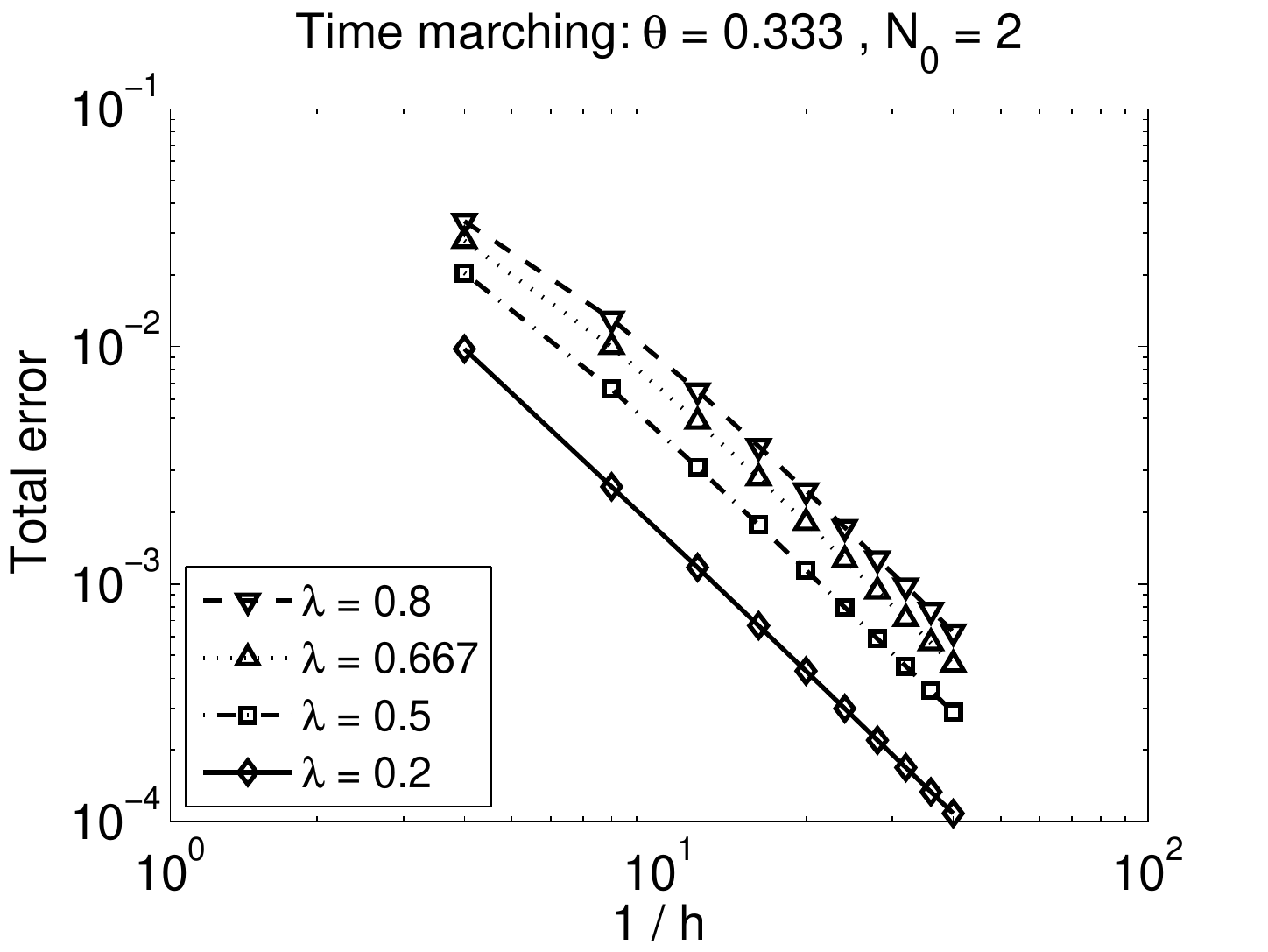} \newline \newline \newline
\includegraphics[scale=0.5]{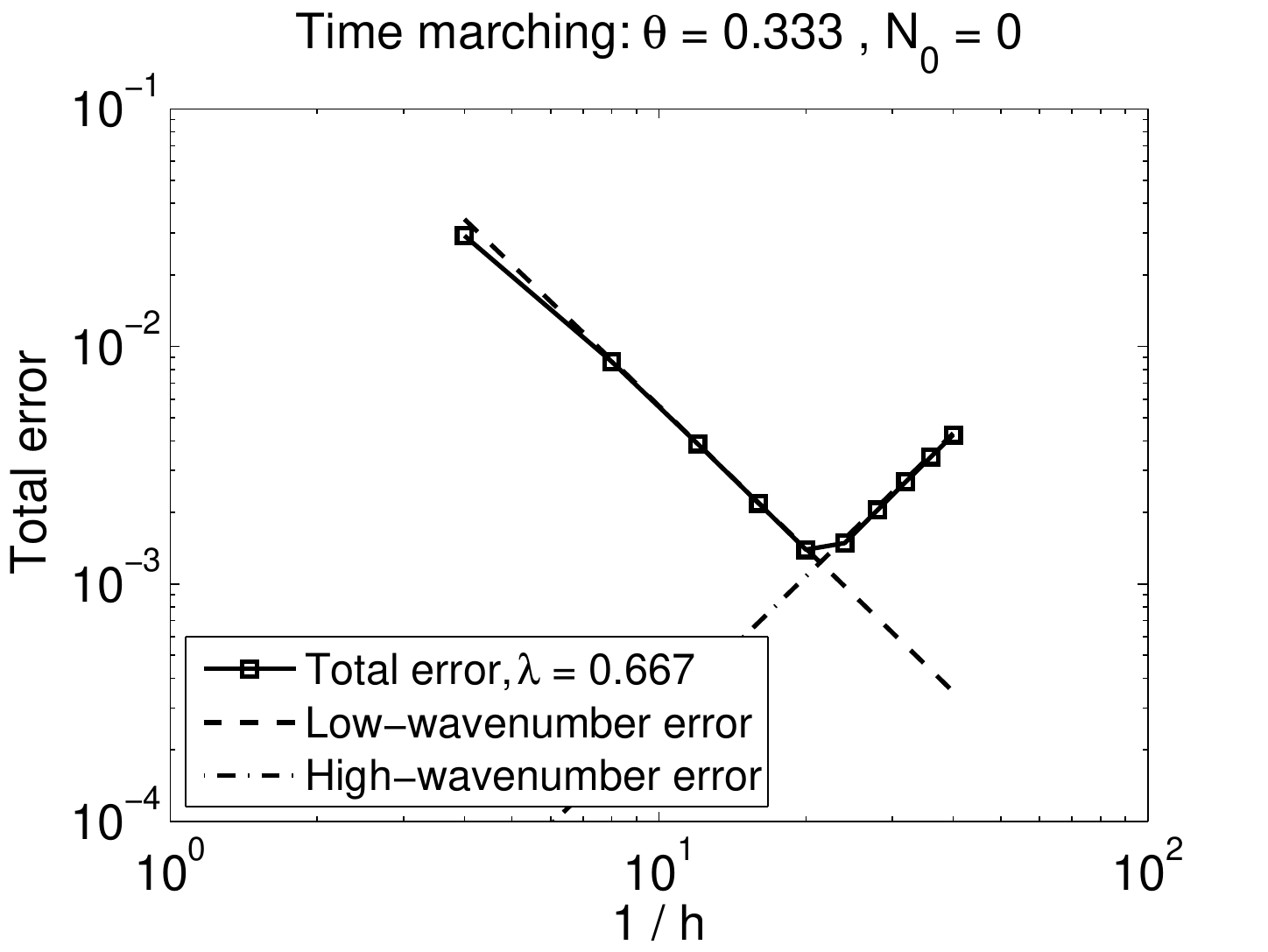} 
\includegraphics[scale=0.5]{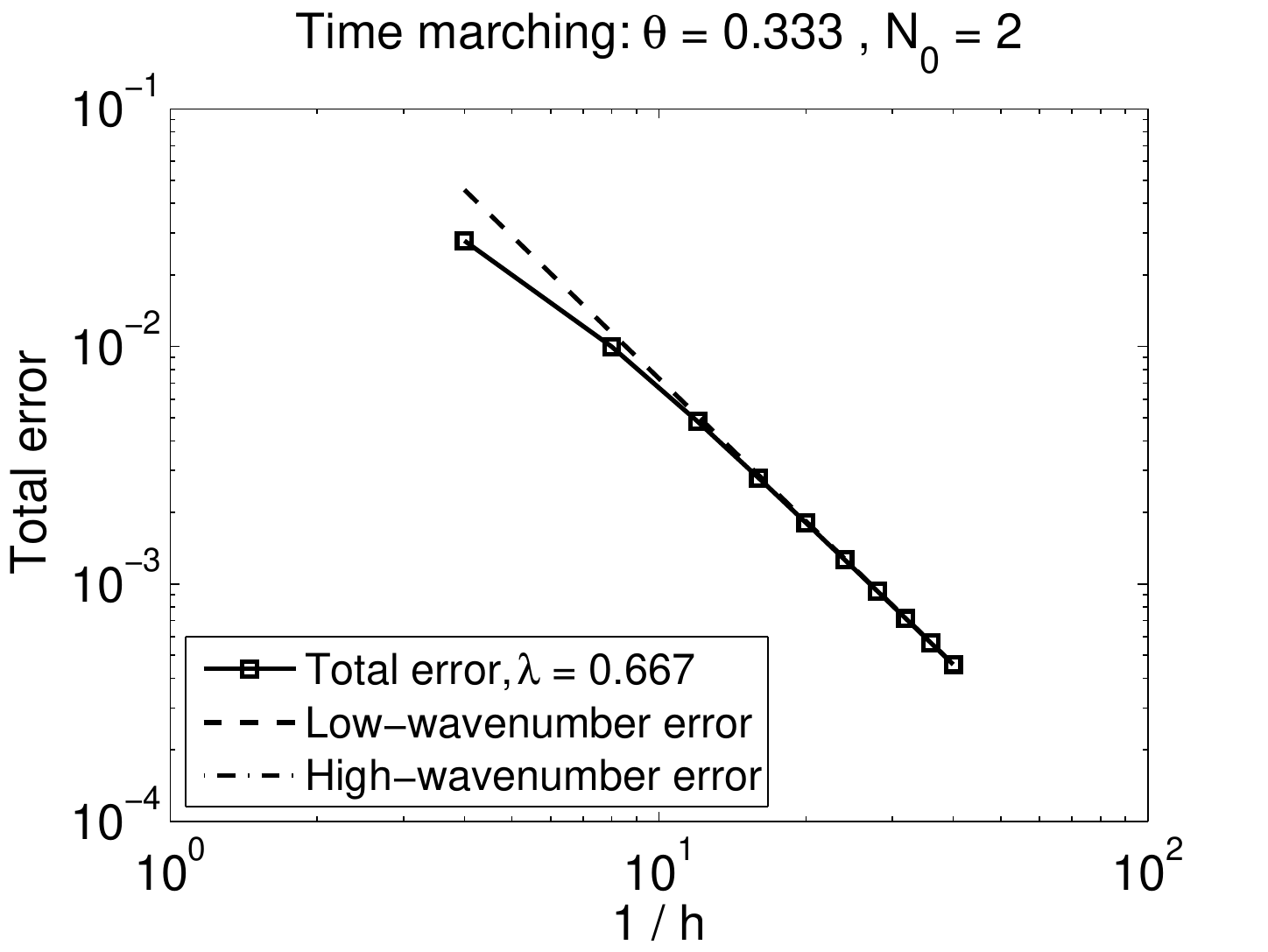} 
\caption{Convergence of the numerical solution for $N_{0}=0$ (left) and $N_{0}=2$ (right). The parameter values are: $\rho = -0.7, a_{1} = 2, a_{2} = 3, \theta = 1/3$.}
\label{fig:Convergence13}
\end{center}
\end{figure}

\begin{figure}
\begin{center}
\includegraphics[scale=0.5]{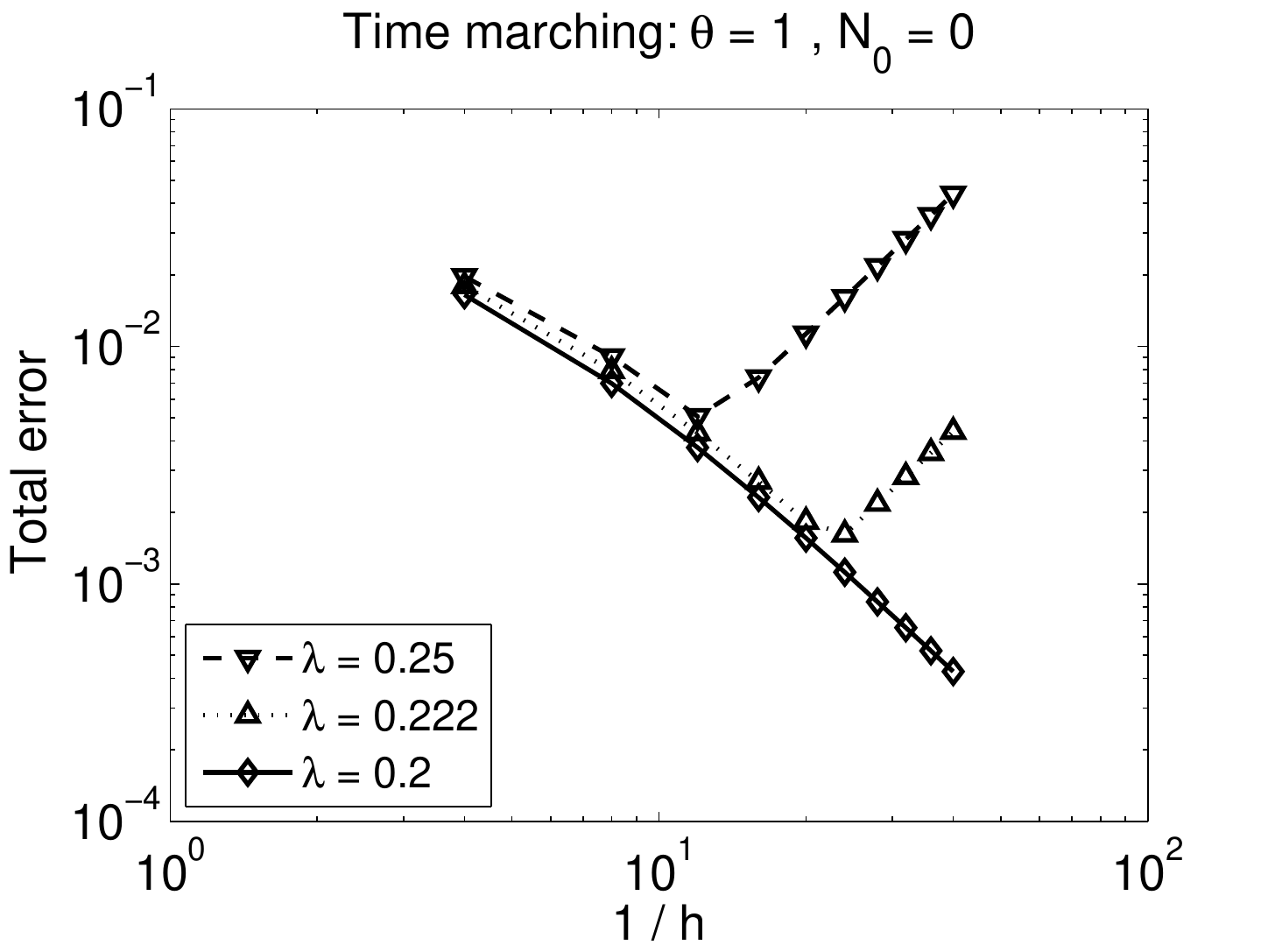} 
\includegraphics[scale=0.5]{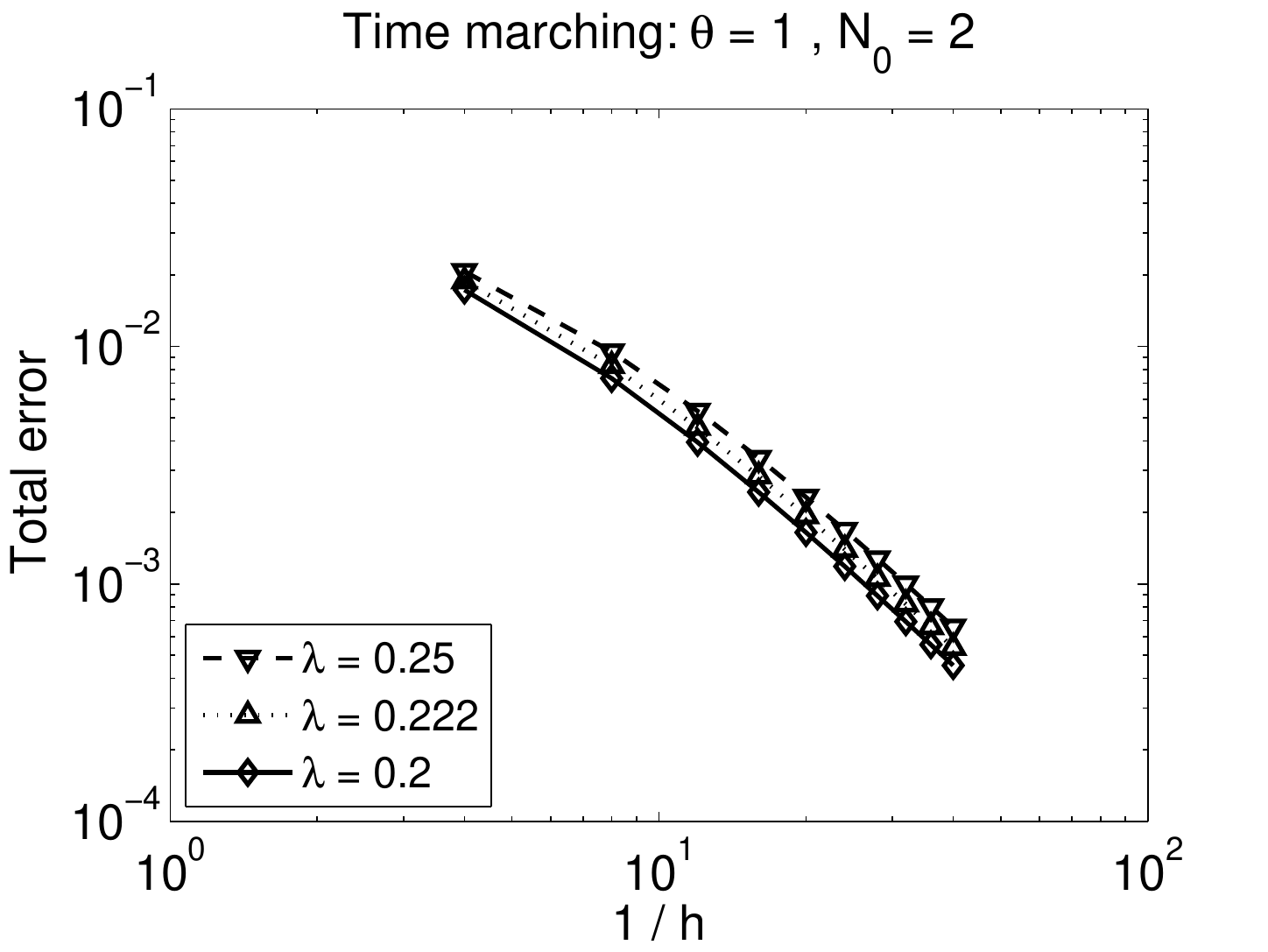} \newline \newline \newline
\includegraphics[scale=0.5]{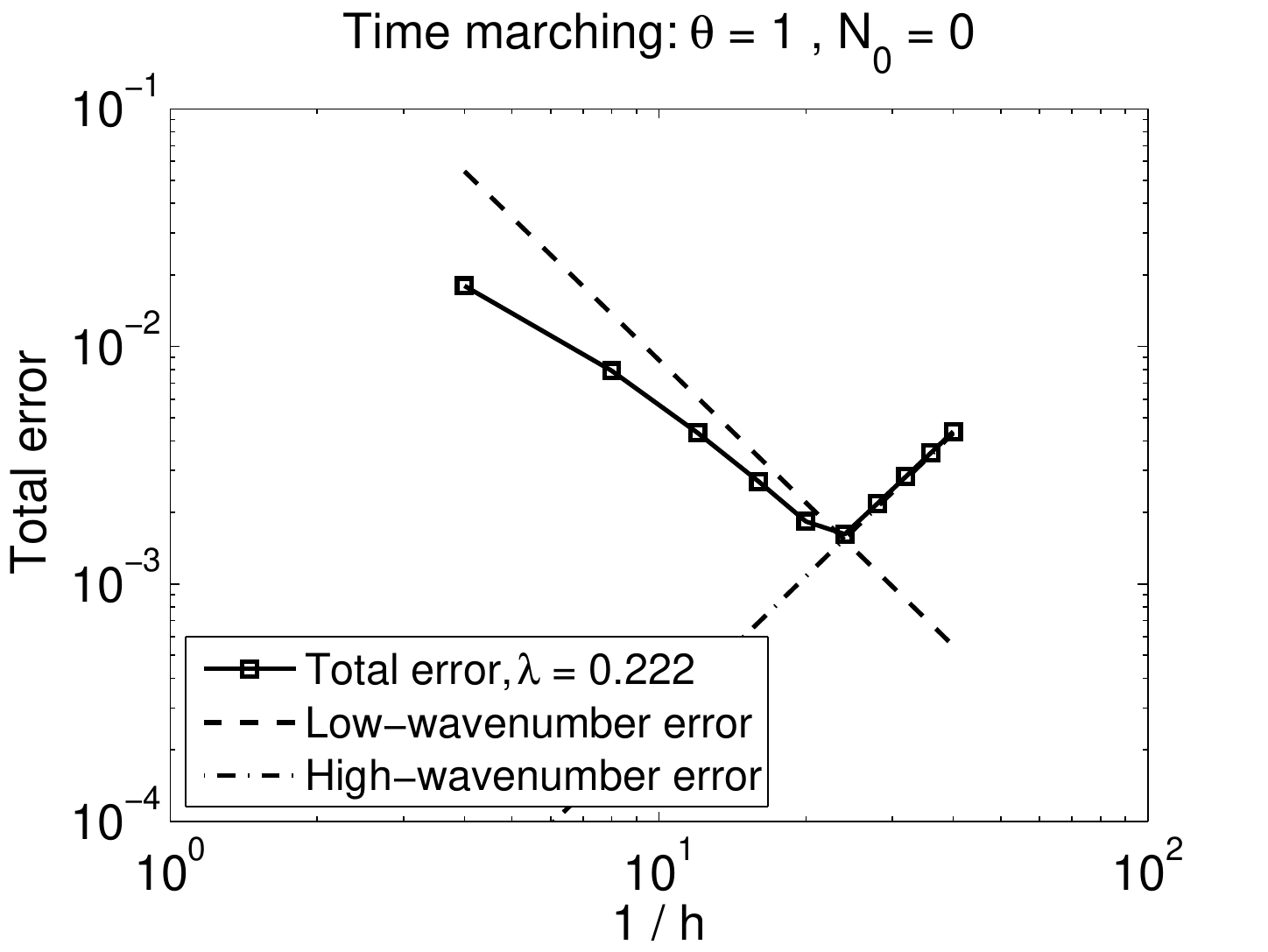} 
\includegraphics[scale=0.5]{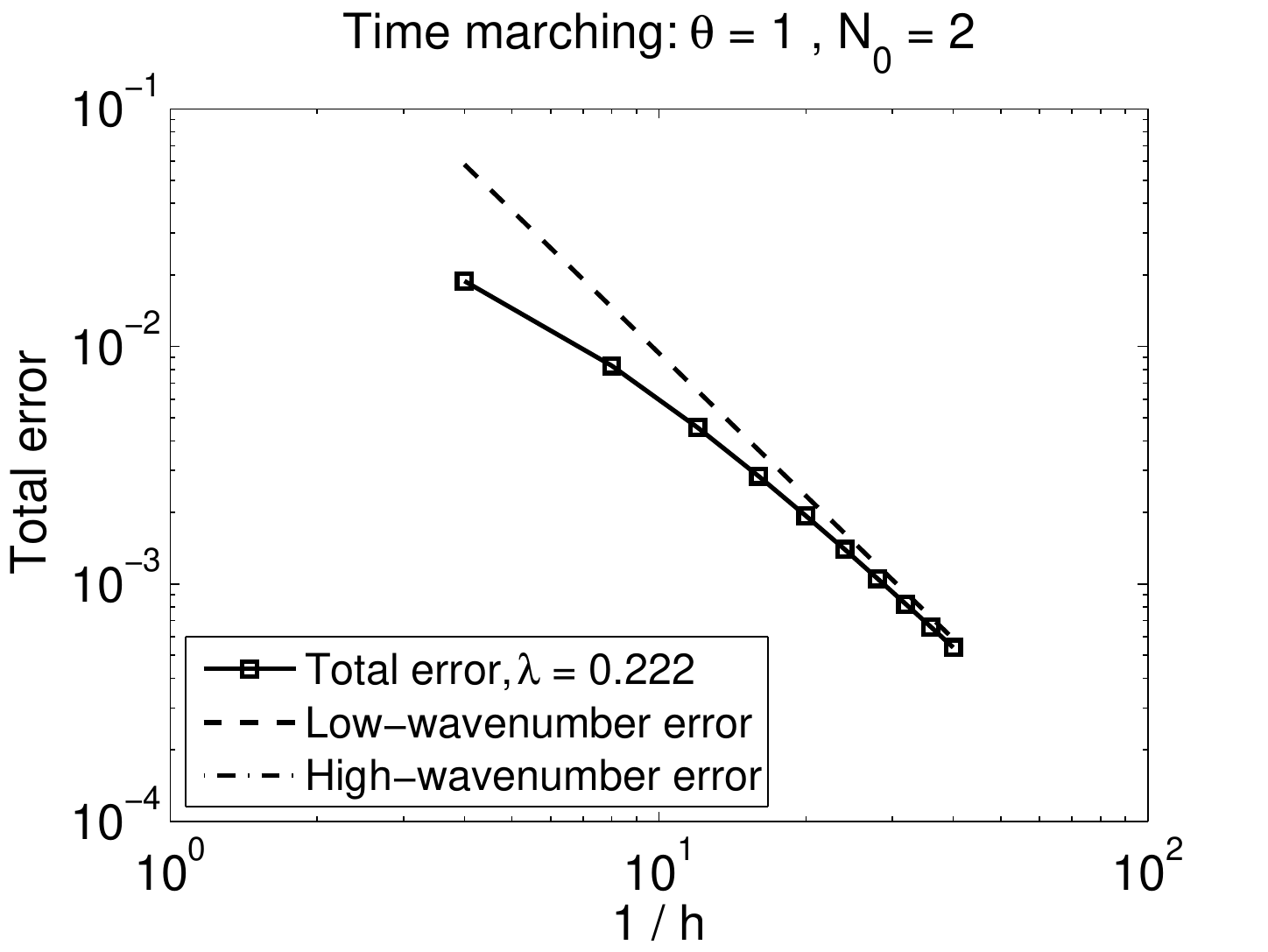} 
\caption{Convergence of the numerical solution for $N_{0}=0$ (left) and $N_{0}=2$ (right). The parameter values are: $\rho = -0.7, a_{1} = 2, a_{2} = 3, \theta = 1$.}
\label{fig:Convergence1}
\end{center}
\end{figure}

\subsection{MCS scheme with $\theta = 1/2$}

For $\theta = 1/2$, the MCS scheme reduces to the CS scheme and $\widehat{U}_{N}$ is not negligible in region 4 and region 5. Based on equalities (\ref{eq:CSWavenumberError}) and (\ref{eq:CSWavenumberError2}) we define
\begin{equation*}
\widehat{E}^{CS,4} = (-1)^{N-N_{0}} \frac{h^{2N_{0}}}{(2\lambda \sin^{2}\tfrac{\vartheta_{1}}{2})^{2N_{0}}} \exp\left( \frac{-1}{\lambda^{2} \sin^{2}\tfrac{\vartheta_{1}}{2}} - \eta^{2} + \imi a_{2} \eta \right)
\end{equation*}
and
\begin{equation*}
\widehat{E}^{CS,5} = (-1)^{N-N_{0}} \frac{(ch)^{2N_{0}}}{(2\lambda \sin^{2}\tfrac{\vartheta_{2}}{2})^{2N_{0}}} \exp\left( \frac{-c^{2}}{\lambda^{2} \sin^{2}\tfrac{\vartheta_{2}}{2}} - \kappa^{2} + \imi a_{1} \kappa \right).
\end{equation*}
Since $\vartheta_{1} = \kappa h_{1},$ $\vartheta_{2} = \eta h_{2}$ and $h_{2} = ch_{1} = ch$, $\widehat{E}^{CS,4}$ only has to be considered in region 4 and $\widehat{E}^{CS,5}$ is only not negligible in region 5. Hence, the Fourier error \eqref{eq:FourierErrorDef} can be approximated by
\begin{equation*}
 \widehat{U}_{N}(\kappa h_{1},\eta h_{2}) - \widehat{u}(\kappa,\eta,1) \approx \widehat{E}^{low} + \widehat{E}^{high} + \widehat{E}^{CS,4} + \widehat{E}^{CS,5}, \qquad \vert \kappa \vert, \vert c\eta \vert \leq \pi/h.  
\end{equation*}
The inverse mixed discrete/continuous Fourier transform of $(-1)^{N-N_{0}} ( \widehat{E}^{CS,4} + \widehat{E}^{CS,5})$ is given by
\begin{eqnarray*}
&& \frac{h^{2N_{0}}}{4\pi^{2}} \int_{-\pi / h_{2}}^{\pi / h_{2}} \int_{-\pi / h_{1}}^{\pi / h_{1}} \frac{\exp(\imi\kappa x_{j})\exp(\imi \eta y_{k})}{(2\lambda \sin^{2}\tfrac{\vartheta_{1}}{2})^{2N_{0}}} \exp\left( \frac{-1}{\lambda^{2} \sin^{2}\tfrac{\vartheta_{1}}{2}} - \eta^{2} + \imi a_{2} \eta \right) d\kappa d\eta,  \\
&&  + \ \frac{h^{2N_{0}}}{4\pi^{2}} \int_{-\pi / h_{2}}^{\pi / h_{2}} \int_{-\pi / h_{1}}^{\pi / h_{1}} \frac{\exp(\imi\kappa x_{j})\exp(\imi \eta y_{k})}{(2\tfrac{\lambda}{c} \sin^{2}\tfrac{\vartheta_{2}}{2})^{2N_{0}}} \exp\left( \frac{-c^{2}}{\lambda^{2} \sin^{2}\tfrac{\vartheta_{2}}{2}} - \kappa^{2} + \imi a_{1} \kappa \right) d\kappa d\eta . 
\end{eqnarray*}
As $h_{1},h_{2}$ tend to zero this can be approximated by
\begin{eqnarray*}
\lefteqn{ (-1)^{N-N_{0}} E^{CS}_{j,k} := \frac{h^{2N_{0}-1}}{4\pi^{2}} \int_{-\infty}^{\infty} \int_{-\pi}^{\pi} \frac{\exp(\imi j \vartheta_{1})\exp(\imi \eta y_{k})}{(2\lambda \sin^{2}\tfrac{\vartheta_{1}}{2})^{2N_{0}}} \exp\left( \frac{-1}{\lambda^{2} \sin^{2}\tfrac{\vartheta_{1}}{2}} - \eta^{2} + \imi a_{2} \eta \right) d\vartheta_{1} d\eta }  \\
&& \qquad \qquad \qquad  + \ \frac{h^{2N_{0}-1}}{4 c \pi^{2}} \int_{-\pi}^{\pi} \int_{-\infty}^{\infty} \frac{\exp(\imi\kappa x_{j})\exp(\imi k \vartheta_{2})}{(2\tfrac{\lambda}{c} \sin^{2}\tfrac{\vartheta_{2}}{2})^{2N_{0}}} \exp\left( \frac{-c^{2}}{\lambda^{2} \sin^{2}\tfrac{\vartheta_{2}}{2}}  - \kappa^{2} + \imi a_{1} \kappa \right) d\kappa d\vartheta_{2} . 
\end{eqnarray*}
Making use of a symmetry argument and a one-dimensional inverse Fourier transformation, $E^{CS}_{j,k}$ can be rewritten as
\begin{equation}
E^{CS}_{j,k} = h^{2N_{0}-1} (-1)^{N-N_{0}} (C^{CS}_{j,y_{k}} + C^{CS}_{x_{j},k}),
\label{eq:CSError}
\end{equation}
with
\begin{eqnarray*}
C^{CS}_{j,y_{k}} &=& \frac{1}{2 \sqrt{2} \pi} \phi\left( \frac{y_{k} + a_{2}}{\sqrt{2}} \right) \int_{-\pi}^{\pi} \frac{\cos(j\vartheta_{1})}{(2\lambda \sin^{2}\tfrac{\vartheta_{1}}{2})^{2N_{0}}} \exp\left( \frac{-1}{\lambda^{2} \sin^{2}\tfrac{\vartheta_{1}}{2}}\right) d\vartheta_{1}, \\
C^{CS}_{x_{j},k} &=& \frac{1}{2 \sqrt{2} c \pi} \phi\left( \frac{x_{j} + a_{1}}{\sqrt{2}} \right) \int_{-\pi}^{\pi} \frac{\cos(k\vartheta_{2})}{(2\tfrac{\lambda}{c} \sin^{2}\tfrac{\vartheta_{2}}{2})^{2N_{0}}} \exp\left( \frac{-c^{2}}{\lambda^{2} \sin^{2}\tfrac{\vartheta_{2}}{2}}\right) d\vartheta_{2}, 
\end{eqnarray*}
where $\phi$ denotes the density function of a standard normally distributed random variable. It is readily seen that $C^{CS}_{j,y_{k}}$, respectively $C^{CS}_{x_{j},k}$, reaches its highest magnitude near the points $(j,k)$ where $(x_{j},y_{k}) \approx (0,-a_{2})$, respectively $(x_{j},y_{k}) \approx (-a_{1},0)$.
For the numerical experiments, the integrals in $C^{CS}_{j,y_{k}}$ and $C^{CS}_{x_{j},k}$ are approximated by numerical integration.
Combining the expressions \eqref{eq:LowWavenumberErrorDetailed}, \eqref{eq:HighWavenumberError} and \eqref{eq:CSError} leads to the following approximation of the total error:
\begin{equation}
U_{N,j,k} - u(x_{j},y_{k},1) \approx h^{2} C^{low}_{x_{j},y_{k}} + h^{2N_{0}-2} C^{high}_{j,k} + h^{2N_{0}-1} (-1)^{N-N_{0}} (C^{CS}_{j,y_{k}} + C^{CS}_{x_{j},k}).
\label{eq:TotalErrorApproxCS}
\end{equation}

From approximation (\ref{eq:TotalErrorApproxCS}) it can be concluded that the total error is also $\mathcal{O}\left(h^{\min\{2,2N_{0}-2\}}\right)$ when CS time stepping is considered. This matches the observations from the plots in Figure \ref{fig:ConvergenceCS} which show convergence results for the same problem as in Subsection \ref{ConvergenceMCS} but with MCS parameter $\theta = 1/2$. The lower plots indicate again that our theoretical estimates for the total error are sharp. 
Without Rannacher time stepping, i.e.\ $N_{0}=0$, the results in Figure \ref{fig:ConvergenceCS} show second-order convergence in $h$ until $E^{CS}_{j,k}$ starts exceeding the low-wavenumber error. Then the total error increases in a first order way until the high-wavenumber error starts dominating. From there the total error is $\mathcal{O}\left(h^{-2}\right)$.
In case the MCS scheme is replaced in the first two timesteps by four half-timesteps of the implicit Euler scheme, i.e.\ $N_{0}=2$, Figure \ref{fig:ConvergenceCS} reveals unconditional second-order convergence in $h$.
Note that both $E^{CS}_{j,k}$ and $E^{high}_{j,k}$ are not visible in the lower-right plot because they are strongly dominated by the low-wavenumber error. The same observation as in Subsection \ref{ConvergenceMCS} can be made concerning the dependency of the low- and high-wavenumber error on the parameter $\lambda$.

\begin{figure}
\begin{center}
\includegraphics[scale=0.5]{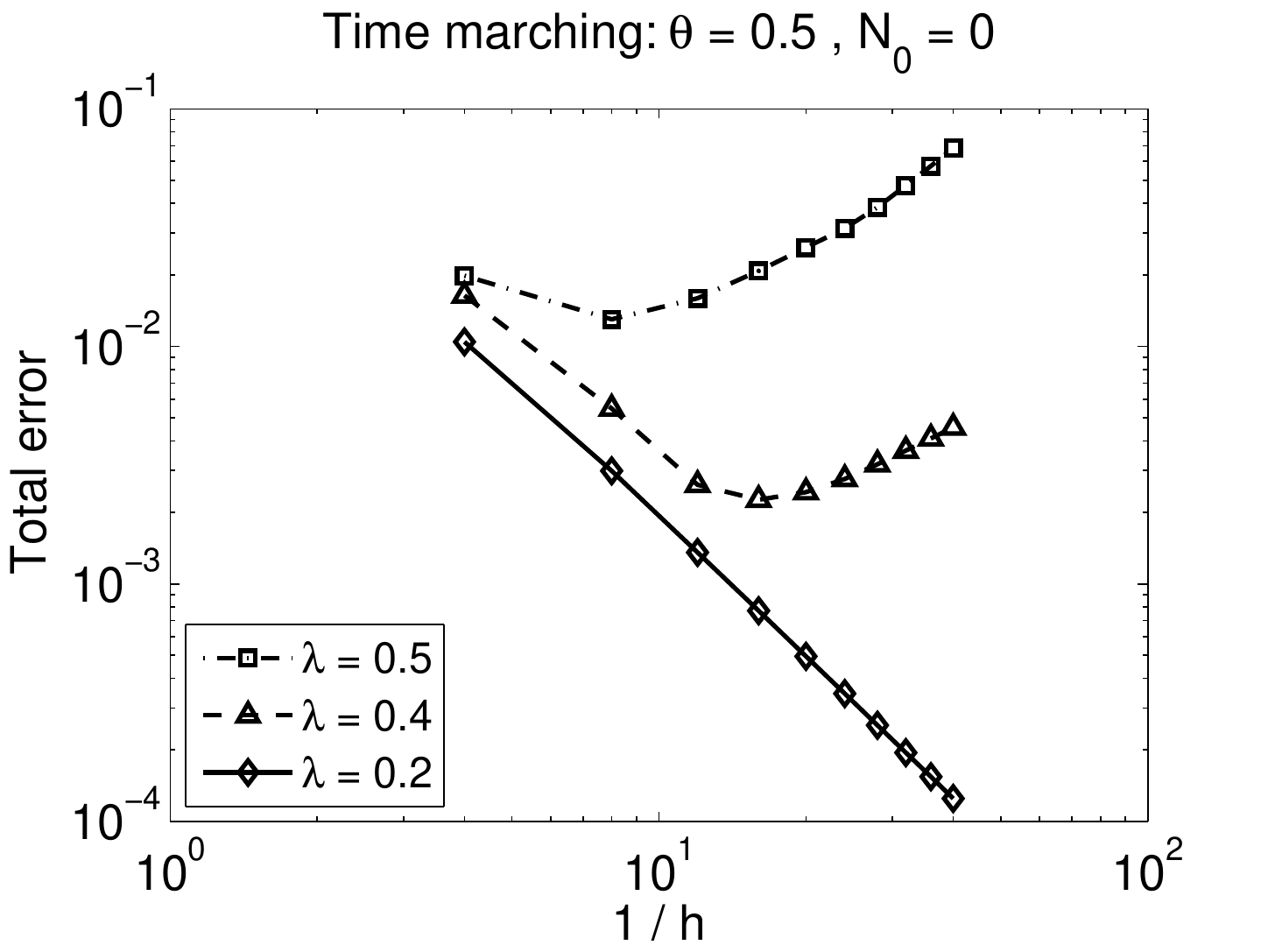} 
\includegraphics[scale=0.5]{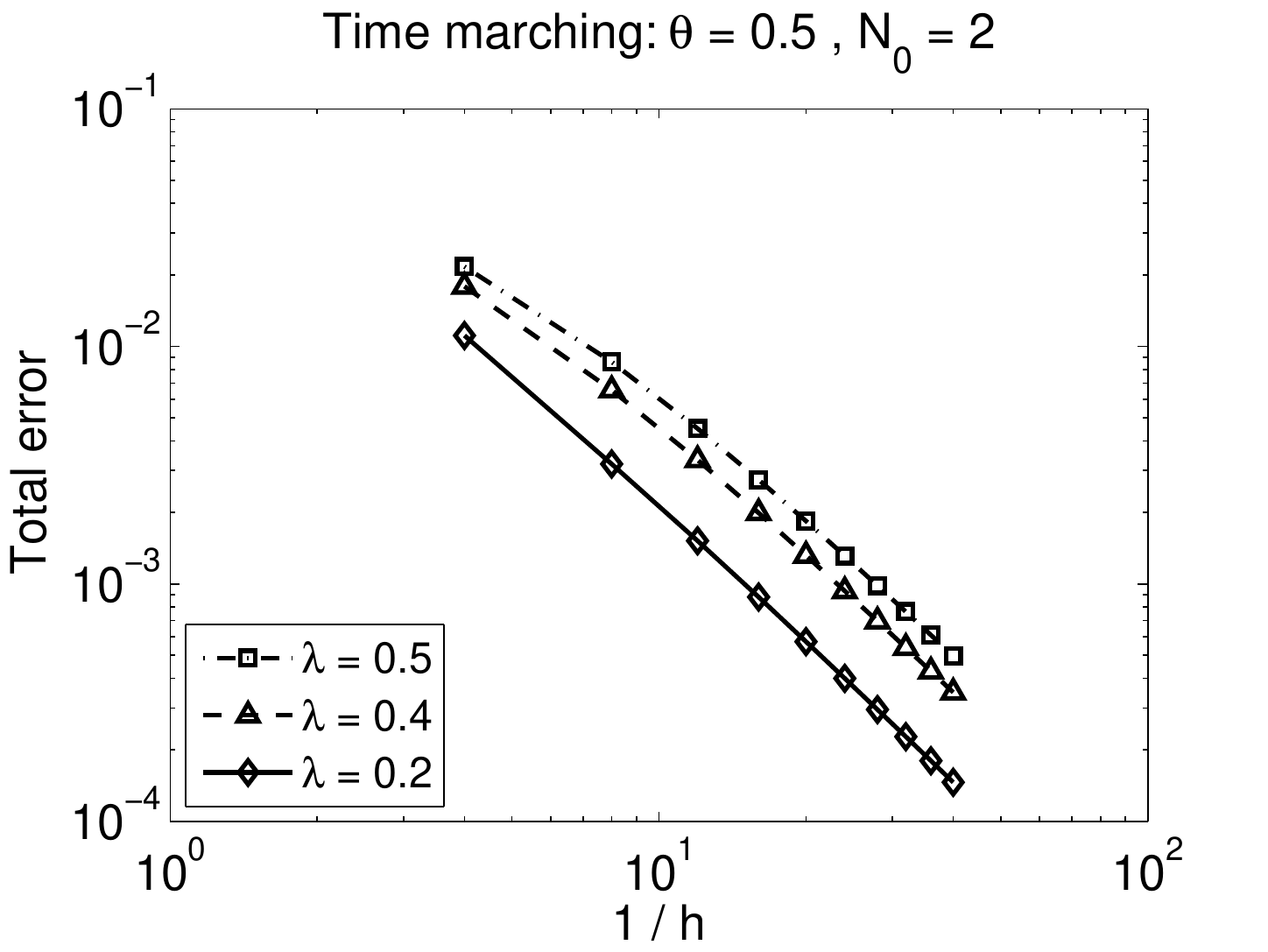} \newline \newline \newline
\includegraphics[scale=0.5]{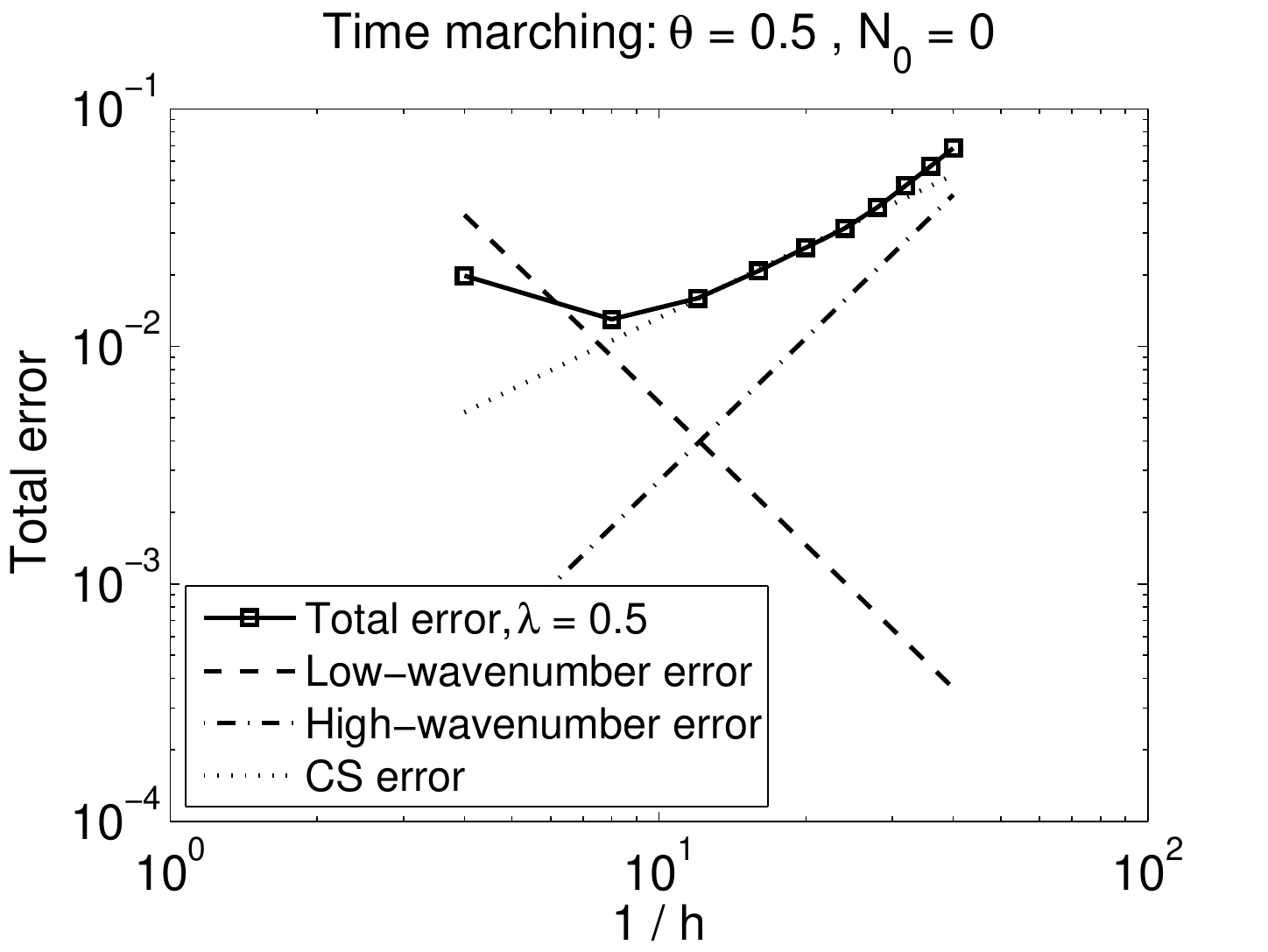} 
\includegraphics[scale=0.5]{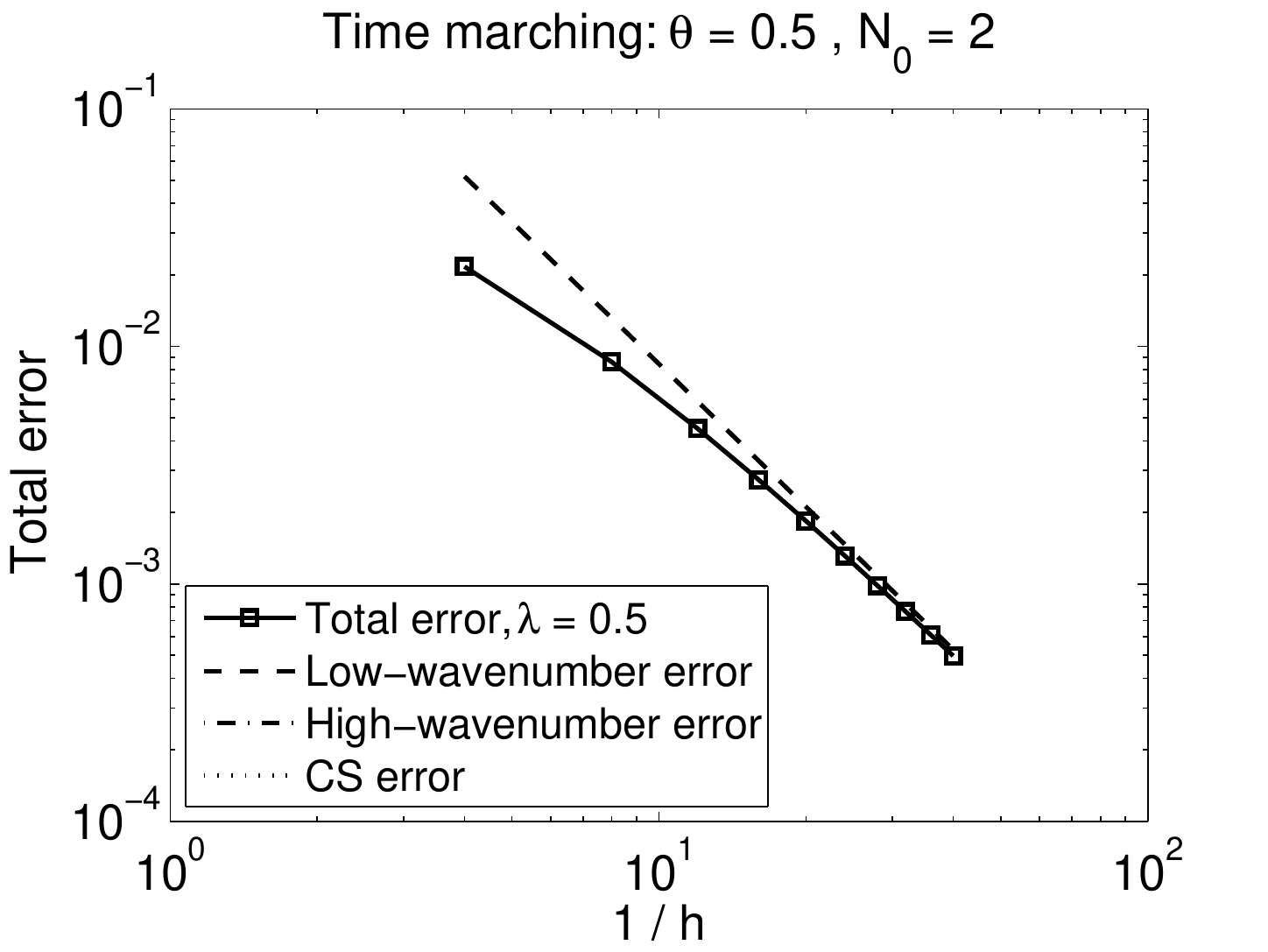} 
\caption{Convergence of the numerical solution for $N_{0}=0$ (left) and $N_{0}=2$ (right). The parameter values are: $\rho = -0.7, a_{1} = 2, a_{2} = 3, \theta = 1/2$.}
\label{fig:ConvergenceCS}
\end{center}
\end{figure}

\newpage
\setcounter{equation}{0}
\section{Conclusion}\label{conclusion}

If the initial data is nonsmooth, application of the MCS scheme for multidimensional time-dependent convection-diffusion equations with mixed-derivative terms can cause spurious erratic behaviour in the numerical solution.
A motivating example, with the two-dimensional Black--Scholes equation for a two-asset cash-or-nothing option, shows that this undesirable feature can be resolved by replacing the very first $N_{0}$ MCS timesteps by $2N_{0}$ half-timesteps of the implicit Euler scheme, with $N_{0}=2$.
We proved, by Fourier analysis, that for a model two-dimensional convection-diffusion equation with mixed-derivative term and with Dirac delta initial data, the total error can be approximated by the sum of a low-wavenumber error of $\mathcal{O}(h^{2})$ and a high-wavenumber error of $\mathcal{O}(h^{2N_{0}-2})$. In case the MCS scheme reduces to the CS scheme, i.e. when $\theta = 1/2$, this has to be augmented with an extra error term of $\mathcal{O}(h^{2N_{0}-1})$.
Hence, $N_{0}=2$ is the minimum on $N_{0}$ in order to guarantee (second-order) convergence of the numerical solution to the exact solution, in the maximum norm. 
In general this choice for $N_{0}$ is optimal since larger values will increase the low-wavenumber error.
Our convergence analysis and numerical experiments further indicate that it is favourable to consider small values of the MCS parameter $\theta$. However, it is necessary to take into account the lower bounds on $\theta$ in order for our asymptotic analysis to be valid.
The smallest value which satisfies all the restrictions, independent of the parameters of the model, is given by $\theta = 1/3$.
This is, indeed, also the most common value of $\theta$ for the MCS scheme considered in the literature.

\setcounter{equation}{0}
\section*{Acknowledgements}
The author is grateful to Karel in 't Hout for his constructive and valuable remarks. 
This work has been supported financially by a PhD Fellowship of the Research Foundation--Flanders.

\vfill\eject


\begin{thebibliography}{99}

\bibitem{B98} \textsc{Bj\"ork, T.} (1998)
\textit{Arbitrage Theory in Continuous Time}.
Oxford: Oxford University Press.

\bibitem{CS88} \textsc{Craig, I.~J.~D., Sneyd, A.~D.} (1988)
An alternating-direction implicit scheme for parabolic equations
with mixed derivatives.
\textit{Comp. Math. Appl.}, \textbf{16}, 341--350.

\bibitem{GC06} \textsc{Giles, M.~B., Carter, R.} (2006)
Convergence analysis of Crank--Nicolson and Rannacher time-marching.
\textit{J. Comp. Finan.}, \textbf{9}, 89--112.

\bibitem{IHF10} \textsc{in~'t~Hout, K.~J., Foulon, S.} (2010)
ADI finite difference schemes for option pricing in the Heston model
with correlation.
\textit{Int. J. Numer. Anal. Mod.}, \textbf{7}, 303--320.

\bibitem{IHM10} \textsc{in~'t~Hout, K.~J., Mishra, C.} (2010)
A stability result for the modified Craig--Sneyd scheme applied to 2D and 3D pure diffusion equations.
\textit{Num. Anal. and Appl. Math., AIP Conf. Proc.}, \textbf{1281}, 2029--2032.

\bibitem{IHM11} \textsc{in~'t~Hout, K.~J., Mishra, C.} (2011)
Stability of the modified Craig--Sneyd scheme for
two-dimensional convection-diffusion equations with mixed
derivative term.
\textit{Math. Comp. Simul.}, \textbf{81}, 2540--2548.

\bibitem{IHM13} \textsc{in~'t~Hout, K.~J., Mishra, C.} (2013)
Stability of ADI schemes for multidimensional diffusion equations
with mixed derivative terms.
\textit{Appl. Numer. Math.}, \textbf{74}, 83--94.

\bibitem{IHW09} \textsc{in~'t~Hout, K.~J., Welfert, B.~D.} (2009)
Unconditional stability of second-order ADI schemes applied to
multi-dimensional diffusion equations with mixed derivative terms.
\textit{Appl. Numer. Math.}, \textbf{59}, 677--692.

\bibitem{IHW15} \textsc{in~'t~Hout, K.~J., Wyns, M.} (2015)
Convergence of the Modified Craig--Sneyd scheme for two-dimensional convection-diffusion equations with mixed derivative term.
Submitted for publication.

\bibitem{HV03} \textsc{Hundsdorfer, W., Verwer, J.~G.} (2003)
\textit{Numerical Solution of Time-Dependent Advection-Diffusion-Reaction Equations.}
Berlin: Springer.

\bibitem{M14} \textsc{Mishra, C.} (2014)
\textit{Stability of Alternating Direction Implicit Schemes with Application to Financial
Option Pricing Equations}.
PhD thesis, University of Antwerp.

\bibitem{PVF03} \textsc{Pooley, D.~M., Vetzal, K.~R., Forsyth, P.~A.} (2003)
Convergence remedies for non-smooth payoffs in option pricing.
\textit{J. Comp. Finan.}, \textbf{6}, 25--40.

\bibitem{R84} \textsc{Rannacher, R.} (1984)
Finite element solution of diffusion problems with irregular data.
\textit{Numer. Math.}, \textbf{43}, 309--327.

\bibitem{S89} \textsc{Strikwerda, J.~C.} (1989)
\textit{Finite Difference Schemes and Partial Differential Equations.}
Belmont: Wadsworth Publ. Co.


\end{thebibliography}
\end{document}